\numberwithin{equation}{section}
\newcommand{\ds}{\ensuremath{\displaystyle}}
\newtheorem{proposition}{Proposition}
\newtheorem{lemma}{Lemma}
\newtheorem{claim}{Claim}
\theoremstyle{remark}
\newtheorem{remark}{Remark}
\begin{document}

\title{Roy's largest root under rank-one alternatives: The complex valued case and applications }
\author{ Prathapasinghe Dharmawansa\and
Boaz Nadler\and
  Ofer Shwartz
}
\date{\today}
\maketitle

\begin{abstract}\small
The largest eigenvalue of a Wishart matrix, known as Roy's largest root (RLR), 
plays an important role in a variety of applications. Most works to date 
derived approximations to its distribution under various asymptotic regimes, such as degrees of freedom, 
dimension, or both tending to infinity. However, several applications involve finite and relative small 
parameters, for which the above approximations may be inaccurate.
Recently, via a small noise perturbation approach with fixed dimension and degrees of freedom, Johnstone 
and Nadler derived simple yet accurate stochastic approximations to the distribution of Roy's largest root in the real 
valued case, under a rank-one alternative.
In this paper, we extend their results to the complex valued case. Furthermore, we analyze the behavior of the leading eigenvector by developing new stochastic approximations. 
Specifically, we derive simple stochastic approximations to the distribution of the largest eigenvalue under five common complex
single-matrix and double-matrix scenarios. We then apply these results to investigate several problems in 
signal detection and communications. In particular, we analyze the performance of RLR detector in cognitive radio spectrum sensing and constant-modulus signal detection in the high signal-to-noise ratio (SNR) regime. Moreover, we address the problem of determining
the optimal transmit-receive antenna configuration (here optimality is in the sense of outage minimization) for rank-one multiple-input and multiple-output Rician-Fading channels at high SNR.
\end{abstract}

\section{Introduction}

Let $H, E$ be two independent complex-valued Wishart matrices, where $E \sim \mathcal{CW}_m(n_E, \Sigma_E)$ and $H$ is either central or non-central Wishart, namely $H \sim \mathcal{CW}_m(n_H, \Sigma_H)$ or $H\sim\mathcal{CW}_m(n_H, \Sigma_H, \Omega)$, respectively. We denote the largest eigenvalue of \(H\) by $\ell_1(H)$ and similarly, the largest eigenvalue of $E^{-1}H$ by $\ell_1(E^{-1}H)$. These largest eigenvalues, either in the single matrix case or in the double matrix case, are central quantities of interest in many applications, specifically in signal detection and communications. More generally, these eigenvalues, also known as Roy's largest roots, play a key role in hypothesis testing problems.

Obtaining simple expressions, exact or approximate, for the distribution of $\ell_1$ in the single or double matrix case has been a subject of intense research for over more than 50 years.
An exact expression for the distribution of  $\ell_1$, in the single central matrix case with an identity covariance matrix ($\Sigma_H = I$), was first presented by Khatri \cite{Khatri_64}. This result was generalized to various other settings, such as an arbitrary covariance matrix or a non-centrality matrix \cite{Khatri_69, Kang_2003, Ratnarajah_2005}.
The resulting expressions are, in general,
 challenging to evaluate numerically.
More recently, Zanella {\em et al}. \cite{Zanella_Chiani} presented simpler exact expressions, both for the central case with arbitrary \(\Sigma_H\), as well as the noncentral case but with $\Sigma_H=I$, that are easier to evaluate, though still require a recursive algorithm.

A different approach to derive approximate distributions for the largest eigenvalue in the null case, where $\Sigma_{E}=\Sigma_{H}=I$, is based on random matrix theory. Considering the limit as $n_H$ and $m$ (and in the double matrix case also \(n_{E}\)) tend to infinity, with their ratios kept fixed, $\ell_1$ in the single matrix case, and $\log(\ell_1)$ in the double matrix case, asymptotically follow  a Tracy-Widom distribution \cite{Johansson, Johnstone_2001, Johnstone_2009}. Furthermore, with suitable centering and scaling coefficients, the convergence to these limiting distributions can be quite fast \cite{Za_2012, Karoui_2006}.

In this paper we focus on the distribution of \(\ell_1\) under a rank-one alternative with complex valued observations, namely when $\Sigma_{H} = I + \lambda {\bf v}{\bf v}^\dagger$ in the central case, or $\Omega= \lambda {\bf v}{\bf v}^\dagger$ in the non-central case, where  \({\bf v}^{\dagger}\) denotes the conjugate transpose of $\bf{v}$.
One classical result in the single-matrix case, is that asymptotically as $n_{H}\to\infty$ with fixed dimension $m$, $\ell_1(H)$ asymptotically follows a Gaussian distribution \cite{Anderson}. Recently, Paul \cite{Paul_2007} proved that in the random matrix setting, as both $n_H$ and $m$ tend to infinity with their ratio fixed, if \(\lambda>\sqrt{m/n_H}\) then $\ell_1(H)$ still converges to a Gaussian distribution.
In the double-matrix case, the location of the phase transition and the limiting value of \(\ell_1(E^{-1}H)\) were recently studied by Nadakuditi and Silverstein \cite{Nadakuditi_Silverstein_2010}. Moreover, in a very recent development, the authors in \cite{Dharmawansa} have proved that, above the phase transition, $\ell_1(E^{-1}H)$ converges to a Gaussian distribution.


Whereas the above results assume that dimension and degrees of freedom tend to infinity, in various common applications these quantities are not only finite but typically relatively small. In such settings, the above asymptotic results may yield poor approximations to the distribution of the largest eigenvalue \(\ell_1\), which may be quite far from 
Gaussian (see Figure \ref{fig:prop1_2} for an illustrative example).
Recently, using a small noise perturbation approach, Johnstone and Nadler \cite{Johnstone_Nadler} derived approximations to the distribution of  $\ell_1$, both for single and double real-valued Wishart matrices with finite dimension and degrees of freedom. In this paper, we build upon their work and extend their results to the complex valued case. Propositions 1-5 of Section \ref{sec:propositions} provide approximate expressions for the distribution of $\ell_1$ corresponding to the five common single-matrix and double-matrix cases outlined in Table \ref{table:4_cases}.
Furthermore, in section \ref{sec:inner} we study the fluctuations in the leading \textit{eigenvector}, in particular its overlap with the population eigenvector.

Next, in section \ref{sec:application}, we illustrate the utility of these approximations in several applications in signal detection and communication. For signal detection, we use propositions 1-4 to provide simple approximate expressions for the power of Roy's largest root test under two common signal models. Next, we consider the outage probability in a multiple-input and multiple-output (MIMO) communication system. For the particular case of a rank-one Rician fading channel, we use Proposition 2, and show \textit{analytically} that to minimize the outage probability it is preferable to have equal number of transmitting and receiving antennas. This important design property 
was previously observed via simulations \cite{Kang_2003}. Finally, Section \ref{sec:simulations} provides some simulation results to verify the accuracy of the new results.

\begin{table}[t]
\begin{center}
    \begin{tabular}{| c |  p{5cm} | p{9cm} | }
    \hline
    Case & Distribution & Testing Problem, Application  \\ \hline

    1 & $H \sim \mathcal{CW}_m(n_H, \Sigma+\lambda\bf{v}\bf{v}^\dagger)$  & Signal detection in noise,  \\
   & $\Sigma$ is known & known noise covariance matrix. \\ \hline

    2 & $H \sim \mathcal{CW}_m(n_{H}, \Sigma,\frac{\omega}{\det(\Sigma)}\bf{v}\bf{v}^\dagger)$ $\Sigma$ is known &  Constant modulus signal detection in noise, known noise covariance matrix. \newline Outage probability of a Rician-fading MIMO channel. \\ \hline

    3 & $H \sim \mathcal{CW}_m(n_{H}, \Sigma+\lambda\bf{v}\bf{v}^\dagger)$  & Signal detection in noise, \\
 & $E\sim \mathcal{CW}_m(n_{E}, \Sigma)$ & estimated noise covariance matrix.  \\ \hline

    4 & $H \sim \mathcal{CW}_m(n_{H}, \Sigma,\frac{\omega}{\det(\Sigma)}\bf{v}\bf{v}^\dagger)$ &  Constant modulus signal detection in noise, \\
& $E\sim \mathcal{CW}_m(n_{E}, \Sigma)$  & estimated noise covariance matrix. \\ \hline

5 & $H\sim \mathcal{CW}_p(q,\Phi,\Omega)$ & canonical correlation analysis\\
& $E\sim \mathcal{CW}_p(n-q,\Phi)$, $\Omega$ random & between two groups of sizes $p\leq q$\\
\hline

    \end{tabular}
\end{center}
\caption{Five common single-matrix and double-matrix settings  and some representative applications.}
\label{table:4_cases}
\end{table}

\section{On the Distribution of Roy's Largest Root}
\label{sec:propositions}

Propositions 1-5 below provide simple approximations to the distribution of Roy's largest root under a rank-one alternative and several common single matrix and double matrix settings. The following 5 propositions correspond to the five cases in Table \ref{table:4_cases}, and extend to the complex-valued setting  propositions 1-5 of \cite{Johnstone_Nadler}.

We start with the simplest setting of a single central Wishart matrix.
Since in cases 1 and 2 the matrix $\Sigma$ is assumed to be known, we assume w.l.g. that $\Sigma=\sigma^2 I$, where $\sigma^2$ is a small parameter, typically representing the noise variance in the absence of a signal. 
In contrast to previous asymptotic approaches whereby $n_H\to\infty$, $m\to\infty$ or both, in the following, we keep the number of samples \(n_{H}\) and the dimension \(m\) fixed, and study the distribution of the largest eigenvalue as \(\sigma\to 0\). We start with the central setting, case 1 in Table \ref{table:4_cases}. 

\begin{proposition}\label{prop:1}
Let $H \sim \mathcal{CW}_m(n, \lambda {\bf v}{\bf v}^\dagger+\sigma^2I)$, with $||{\bf v}||=1$, $\lambda>0$ and let $\ell_1$ be its largest eigenvalue. Then, with $(m, n, \lambda)$ fixed, as $\sigma \rightarrow 0$
\begin{equation}
\label{eq:prop_1}
\ell_1(\sigma) = \frac{\lambda + \sigma^2 }{2}A + \frac{\sigma^2}{2}B + \frac{\sigma^4 }{2(\lambda + \sigma^2)}\frac{B\cdot C}{A} + o(\sigma^4)
\end{equation}
where \(A,B,C\) are independent random variables, distributed as\  $A \sim \chi_{2n}^2, B \sim\chi_{2m-2}^2,$ and $C \sim\chi_{2n-2}^2$.
\end{proposition}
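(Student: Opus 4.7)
By unitary invariance of the complex Wishart distribution I may take $\mathbf{v}=\mathbf{e}_1$, so that $\Sigma_H=\mathrm{diag}(\lambda+\sigma^2,\sigma^2,\dots,\sigma^2)$. Write $H=XX^\dagger$ with
\begin{equation*}
X \;=\; \begin{pmatrix}\sqrt{\lambda+\sigma^2}\,\mathbf{z}_1^T \\ \sigma\,\tilde Z\end{pmatrix},\qquad \mathbf{z}_1\in\mathbb{C}^n,\ \tilde Z\in\mathbb{C}^{(m-1)\times n}
\end{equation*}
having mutually independent i.i.d.\ $\mathcal{CN}(0,1)$ entries. The induced $1{+}(m{-}1)$ block decomposition has
\begin{equation*}
H_{11}=(\lambda+\sigma^2)\|\mathbf{z}_1\|^2,\qquad H_{21}=\sqrt{\lambda+\sigma^2}\,\sigma\,\tilde Z\,\overline{\mathbf{z}_1},\qquad H_{22}=\sigma^2\tilde Z\tilde Z^\dagger,
\end{equation*}
of respective orders $O(1)$, $O(\sigma)$, $O(\sigma^2)$. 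For small $\sigma$ the largest eigenvalue $\ell_1$ is therefore the unique root of the secular equation close to $H_{11}$, and $\ell_1 I-H_{22}$ is invertible on an event of probability tending to one.

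Inverting the Schur complement yields
\begin{equation*}
\ell_1-H_{11}\;=\;H_{12}(\ell_1 I-H_{22})^{-1}H_{21}\;=\;\sum_{k\ge 0}\frac{H_{12}H_{22}^k H_{21}}{\ell_1^{k+1}},
\end{equation*}
whose $k$-th summand is $O(\sigma^{2k+2})$, so only $k=0,1$ contribute through order $\sigma^4$. One Newton iteration --- substituting $\ell_1\approx H_{11}+\|H_{21}\|^2/H_{11}$ back into the $k=0$ denominator and replacing $\ell_1$ by $H_{11}$ in the $k=1$ term --- produces
\begin{equation*}
\ell_1 \;=\; H_{11}+\frac{\|H_{21}\|^2}{H_{11}}-\frac{\|H_{21}\|^4}{H_{11}^3}+\frac{H_{12}H_{22}H_{21}}{H_{11}^2}+o(\sigma^4).
\end{equation*}

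The three chi-squared variables emerge via a conditioning and rotation argument. Set $A:=2\|\mathbf{z}_1\|^2\sim\chi^2_{2n}$, giving $H_{11}=(\lambda+\sigma^2)A/2$. Conditional on $\mathbf{z}_1$, put $\mathbf{u}=\overline{\mathbf{z}_1}/\|\mathbf{z}_1\|$ and decompose $\tilde Z=\mathbf{g}\mathbf{u}^\dagger+\tilde Z_\perp$ with $\mathbf{g}=\tilde Z\mathbf{u}$ and $\tilde Z_\perp=\tilde Z(I-\mathbf{u}\mathbf{u}^\dagger)$; by rotational invariance the components $\mathbf{g}\sim\mathcal{CN}_{m-1}(0,I)$ and $\tilde Z_\perp$ (statistically equivalent to an $(m-1)\times(n-1)$ i.i.d.\ standard complex Gaussian matrix) are independent of each other and of $\mathbf{z}_1$. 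Then $\tilde Z\overline{\mathbf{z}_1}=\|\mathbf{z}_1\|\mathbf{g}$ gives $\|H_{21}\|^2/H_{11}=\sigma^2 B/2$ for $B:=2\|\mathbf{g}\|^2\sim\chi^2_{2m-2}$, while $\tilde Z\tilde Z^\dagger=\mathbf{g}\mathbf{g}^\dagger+\tilde Z_\perp\tilde Z_\perp^\dagger$ yields $H_{12}H_{22}H_{21}=(\lambda+\sigma^2)\sigma^4\|\mathbf{z}_1\|^2(\|\mathbf{g}\|^4+\|\tilde Z_\perp^\dagger\mathbf{g}\|^2)$. Since $\tilde Z_\perp^\dagger\mathbf{g}\mid\mathbf{g}$ is $\mathcal{CN}_{n-1}(0,\|\mathbf{g}\|^2 I)$, the variable $C:=2\|\tilde Z_\perp^\dagger\mathbf{g}\|^2/\|\mathbf{g}\|^2\sim\chi^2_{2n-2}$ is independent of $(A,B)$. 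Substituting, the $\|\mathbf{g}\|^4$ piece cancels exactly against $-\|H_{21}\|^4/H_{11}^3$, and the surviving $\sigma^4 BC/[2(\lambda+\sigma^2)A]$ term reproduces \eqref{eq:prop_1}.

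The principal technical point is to make the remainder $o(\sigma^4)$ rigorous: one must show that the tail $\sum_{k\ge 2}H_{12}H_{22}^k H_{21}/\ell_1^{k+1}$ is $o_P(\sigma^4)$ and that the formal manipulations above are justified. This reduces to establishing, on an event of probability tending to one as $\sigma\to 0$, that $H_{11}$ stays bounded below by a positive constant (a tail bound on $\chi^2_{2n}$) and that $\|H_{22}\|_{\mathrm{op}}=O(\sigma^2)$ (a concentration bound on the largest eigenvalue of a standard complex Wishart), so that the geometric series for $(\ell_1 I-H_{22})^{-1}$ converges and each subsequent summand is genuinely of order $\sigma^{2k+2}$.
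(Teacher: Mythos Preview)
Your proof is correct, and the distributional identification of $A,B,C$ via the rotation/conditioning argument is essentially identical to what the paper does. Where you differ is in how you obtain the deterministic expansion of $\ell_1$ to order $\sigma^4$. The paper's Lemma~1 invokes Kato's analytic perturbation theory: writing $H(\epsilon)=A_0+\epsilon A_1+\epsilon^2 A_2$, it uses analyticity of the simple eigenvalue and its eigenprojection to expand both $\ell_1$ and the eigenvector in a convergent Taylor series, then reads off the coefficients by substituting into $Hv_1=\ell_1 v_1$ order by order, arriving at $\ell_1=z+\epsilon^2 b^\dagger b+\epsilon^4 z^{-1}b^\dagger(Z-bb^\dagger)b+\dots$. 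You instead use the Schur complement (secular equation) $\ell_1-H_{11}=H_{12}(\ell_1 I-H_{22})^{-1}H_{21}$ and a Neumann series, which is more elementary and avoids Kato entirely; the price is that you must argue the remainder bound by hand, whereas the paper gets it for free from analyticity. Note that your ``probability tending to one'' phrasing undersells what you actually have: since $\|\mathbf{z}_1\|^2>0$ and $\|\tilde Z\tilde Z^\dagger\|_{\mathrm{op}}<\infty$ almost surely, on a probability-one event the Neumann series converges for all sufficiently small $\sigma$, giving a realization-wise $o(\sigma^4)$ matching the paper's statement. Your route has the advantage of being self-contained; the paper's has the advantage that analyticity, evenness in $\epsilon$, and the eigenvector expansion (used later for Propositions~6--7 on the eigenvector) all come in one package.
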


\begin{remark}
Approximate expressions for the mean and variance of \(\ell_1\) follow
directly from Eq. (\ref{eq:prop_1}). Since $\mathbb{E}[\chi^2_{k}]=k$ and for $k>2$
$\mathbb{E}[1/\chi^2_k] = 1/(k-2)$, then for $n>1$
as $\sigma\rightarrow0$, 
\begin{equation}
\label{eq:prop_1_mean}
\mathbb{E}[ \ell_1(\sigma) ]= n \lambda + (n+m-1)\sigma^2  + \frac{\sigma^4 }{\lambda + \sigma^2}(m-1)+o(\sigma^4)
\end{equation}
Similarly, since $\mathrm{Var}[\chi^2_{k}]=2k$ and for $k>4$ 
$\mathrm{Var}[1/\chi^2_k] = \frac{2}{(k-2)^2(k-4)}$, then for $n>3$  
\begin{equation}
\label{eq:prop_1_variance}
\mathrm{Var}[ \ell_1(\sigma)] = 2\left( \lambda n + \sigma^2(n+m-1)+\frac{\sigma^4}{\lambda+\sigma^2}
\frac{(m-1)}{(n-1)(n-2)}\right)+o(\sigma^2)
\end{equation}
\end{remark}

The next proposition considers the non-central single Wishart matrix.
\begin{proposition}\label{prop:2}
Let $H \sim \mathcal{CW}_m(n, \sigma^2I, (\omega / \sigma^2) {\bf v}{\bf v}^\dagger)$, with $||{\bf v}||=1$, and let $\ell_1$ be its largest eigenvalue. Then, with $(m, n, \omega)$ fixed, as $\sigma \rightarrow 0$
\begin{equation}
\label{eq:prop_2}
\ell_1(\sigma) = \frac{\sigma^2}{2}\left[A +B+ \frac{B \cdot C }{A}  \right]+ o(\sigma^4)
\end{equation}
where \(A,B,C\) are all independent and distributed as $A \sim \chi_{2n}^2(2\omega / \sigma^2), B \sim\chi_{2m-2}^2$ and $C \sim\chi_{2n-2}^2$.
\end{proposition}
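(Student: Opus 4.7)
The plan is to realise $H = XX^\dagger$ as a Gram matrix, bring $X$ to a canonical form by a right-unitary action, and then apply a Schur-complement expansion. Write $X = \sqrt{\omega}\,\mathbf{v}(\mathbf{e}_1^{(n)})^T + \sigma Z$, with $Z$ an $m\times n$ matrix of i.i.d.\ $\mathcal{CN}(0,1)$ entries; then $XX^\dagger \sim \mathcal{CW}_m(n, \sigma^2 I, (\omega/\sigma^2)\mathbf{v}\mathbf{v}^\dagger)$ as required, and the $\mathrm{U}(m)$-invariance of the eigenvalues lets me take $\mathbf{v} = \mathbf{e}_1^{(m)}$. Conditionally on the first row $X_{1,\cdot}$, pick a unitary $Q \in \mathrm{U}(n)$ with $X_{1,\cdot}Q = (\|X_{1,\cdot}\|, 0, \ldots, 0)$. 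Right-multiplication by $Q$ leaves $XX^\dagger$ unchanged, so I work with $\tilde{X} := XQ$. Since rows $2,\ldots,m$ of $X$ are independent of $X_{1,\cdot}$ and their joint distribution is rotationally invariant, the lower $(m-1)\times n$ block of $\tilde X/\sigma$ is a fresh i.i.d.\ $\mathcal{CN}(0,1)$ matrix $\tilde{Z}^{(2)} \in \mathbb{C}^{(m-1)\times n}$, independent of $A' := \|X_{1,\cdot}\|^2$. Splitting $\tilde{Z}^{(2)} = [\mathbf{f},\tilde{Z}'']$ with $\mathbf{f}\in\mathbb{C}^{m-1}$ and $\tilde{Z}''\in\mathbb{C}^{(m-1)\times(n-1)}$, the Gram matrix takes the clean block form
\[
\tilde{X}\tilde{X}^\dagger = \begin{pmatrix} A' & \sigma\sqrt{A'}\,\mathbf{f}^\dagger \\ \sigma\sqrt{A'}\,\mathbf{f} & \sigma^{2}\,\tilde{Z}^{(2)}(\tilde{Z}^{(2)})^\dagger\end{pmatrix}.
\]

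The top eigenvalue $\ell_1$ satisfies the Schur-complement identity
\[
\ell_1 = A' + \sigma^2 A'\,\mathbf{f}^\dagger(\ell_1 I - M)^{-1}\mathbf{f}, \qquad M := \sigma^{2}\bigl(\mathbf{f}\mathbf{f}^\dagger + \tilde{Z}''(\tilde{Z}'')^\dagger\bigr).
\]
Since $\|M\| = O(\sigma^2)$ and $\ell_1 \to \omega > 0$, the resolvent expands as $(\ell_1 I - M)^{-1} = \ell_1^{-1}\sum_{k\geq 0}(M/\ell_1)^k$. Keeping $k = 0, 1$ and iterating once using $\ell_1 = A' + O(\sigma^2)$, the $\sigma^{4}\|\mathbf{f}\|^{4}/A'$ piece produced by expanding the $k=0$ denominator cancels exactly the $\|\mathbf{f}\|^{4}$ contribution of the $k=1$ Neumann term, leaving
\[
\ell_1 = A' + \sigma^{2}\|\mathbf{f}\|^{2} + \frac{\sigma^{4}\,\|(\tilde{Z}'')^\dagger\mathbf{f}\|^{2}}{A'} + O(\sigma^{6}).
\]

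To identify the three chi-squares, observe that $X_{1,\cdot}/\sigma \sim \mathcal{CN}_n\!\bigl(\sqrt{\omega/\sigma^{2}}\,\mathbf{e}_1^{(n)}, I\bigr)$, so $A := 2A'/\sigma^{2} \sim \chi^{2}_{2n}(2\omega/\sigma^{2})$. Set $B := 2\|\mathbf{f}\|^{2}$, giving $B \sim \chi^{2}_{2m-2}$. Writing $(\tilde{Z}'')^\dagger\mathbf{f} = \|\mathbf{f}\|\cdot(\tilde{Z}'')^\dagger(\mathbf{f}/\|\mathbf{f}\|)$ and using the unitary invariance of $\tilde{Z}''$, the vector $(\tilde{Z}'')^\dagger(\mathbf{f}/\|\mathbf{f}\|)$ is $\mathcal{CN}_{n-1}(0,I)$ and independent of $\mathbf{f}$, so $C := 2\|(\tilde{Z}'')^\dagger\mathbf{f}\|^{2}/\|\mathbf{f}\|^{2} \sim \chi^{2}_{2n-2}$ and is independent of $B$. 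Independence of $C$ from $A$ is inherited from the independence of $\tilde{Z}^{(2)}$ from $X_{1,\cdot}$. The $\sigma^{4}$ correction is exactly $\sigma^{2}BC/(2A)$, yielding the claimed expansion with error $O(\sigma^{6}) = o(\sigma^{4})$.

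The main technical obstacle is avoiding $O_P(\sigma^{3})$ fluctuations. A naive perturbation of $H$ around the rank-one matrix $\omega\,\mathbf{e}_1(\mathbf{e}_1)^T$ produces cross-terms between the signal entry $\sqrt{\omega}+\sigma z_{11}$ and the first row of $\sigma Z$; these are zero-mean but strictly $O_P(\sigma^{3})$, and would spoil the $o(\sigma^{4})$ claim. The right-unitary rotation $Q$ is precisely what kills them: after rotation, the off-diagonal block $\mathbf{b} = \sigma\sqrt{A'}\,\mathbf{f}$ is collinear with the pure-noise vector $\mathbf{f}$, so $\mathbf{b}^\dagger\mathbf{b} = \sigma^{2}A'\|\mathbf{f}\|^{2}$ is an \emph{exact} product with no $\sigma^{3}$ cross-piece, and the remainder of the argument reduces to routine bookkeeping in the Neumann series.
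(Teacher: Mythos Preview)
Your argument is correct and ultimately equivalent to the paper's, but the mechanics differ enough to be worth noting. The paper's proof packages the perturbation step into a separate Lemma~1: it writes $H(\epsilon)=A_0+\epsilon A_1+\epsilon^2 A_2$, invokes Kato's analyticity theorem for the top eigenpair, expands $\ell_1$ and $v_1$ in power series, and solves the resulting hierarchy order by order; the vanishing of odd coefficients is obtained via the parity observation $H(-\epsilon)=U^TH(\epsilon)U$. Only \emph{after} deriving $\ell_1=z+\epsilon^2 b^\dagger b+\epsilon^4 z^{-1}b^\dagger(Z-bb^\dagger)b+\cdots$ does the paper introduce the right-unitary $O$ (your $Q$) to rewrite $b=\Xi\bar u/\|u\|$ and $b^\dagger(Z-bb^\dagger)b=\sum_{j\ge 2}|v_1^\dagger v_j|^2$ and read off the chi-square laws.

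You reverse the order: rotate first to obtain the exact $2\times 2$ block structure, then run a Schur-complement/Neumann expansion. This has two advantages: it avoids Kato's theorem entirely, and the absence of $O(\sigma^3)$ terms is structurally visible from the block form (as you explain) rather than inferred from a symmetry argument. The trade-off is that the paper's Lemma~1 is formulated once and reused across Propositions~1--2 and (implicitly) the eigenvector Propositions~6--7, whereas your Schur route is specific to the eigenvalue expansion. After the dust settles the two computations coincide term by term: your $A'$, $\mathbf{f}$, and $(\tilde Z'')^\dagger\mathbf{f}$ are exactly the paper's $\|u\|^2$, $v_1$, and $(v_j^\dagger v_1)_{j\ge 2}$, and your $\|\mathbf f\|^4$ cancellation is the paper's $b^\dagger(Z-bb^\dagger)b$ identity in disguise.
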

\begin{remark}
By representing $\chi_k^2(\delta)$ as $\chi_{k+2K}^2$ for $K \sim Poisson(\delta/2)$, we obtain $\mathbb{E[}\chi_k^2(\delta)] = k + \delta$, $\mathbb{E[}(\chi_k^2(\delta))^{-1}]\approx(n-2+\delta)^{-1}$ and $\mathrm{Var}(\chi_k^2(\delta))^{-1}\approx\frac{2}{(k+\delta-2)^2(k+\delta-4)}$. Therefore, assuming $n>2$, we can approximate the expectation and the variance of (\ref{eq:prop_2}) by
\begin{equation}
\label{eq:prop_2_mean}
\mathbb{E}[ \ell_1(\sigma)] \approx (n+m-1)\sigma^2+\omega + \frac{(n-1)(m-1)}{\sigma^2(n-1)+\omega}
\end{equation}
and
\begin{equation}
\label{eq:prop_2_variance}
\mathrm{Var}[ \ell_1(\sigma)] \approx 8\omega + 4\sigma^2\left(  n + m-1+\frac{(n-1)(m-1)}{2(n+\frac{\sigma^2}{\omega}-1)^{2}(n+\frac{\sigma^2}{\omega}-2)}\right)
\end{equation}
\end{remark}

\begin{remark} In both Eqs. (\ref{eq:prop_1}) and (\ref{eq:prop_2}), note that as $n \rightarrow \infty$, the \(\chi^2 \) random variables converge to Gaussian ones, and thus asymptotically the distribution of  $\ell_1$ converges to a Gaussian
as well. This is in accordance with classical asymptotic results, see \cite{Anderson}.
\end{remark}

The next two propositions provide approximations to the distribution of Roy's
largest root in the central and non-central double matrix settings, which correspond to cases 3 and 4 in Table \ref{table:4_cases}. Since $\ell^{-1}(E^{-1}H) = \ell^{-1}((\Sigma^{-1}E)^{-1}(\Sigma^{-1}H))$, we can assume w.l.g. that $\Sigma=I$.
\begin{proposition}\label{prop:3}
Suppose that $H \sim \mathcal{CW}_m(n_H, I+ \lambda {\bf v} {\bf v}^\dagger)$ and $E \sim CW_m(n_E, I)$ are independent complex Wishart matrices, with $n_{E} > m+1$ and $||{\bf v}|| = 1$. Let $\ell_1$ be the largest eigenvalue of $E^{-1}H$. Then, with $(m, n_H, n_E)$ fixed, as $\lambda \rightarrow \infty$
\begin{equation}
\label{eq:prop_3}
\ell_1(\lambda) \approx
(1+\lambda)a_{1}
 F_{b_1,c_1}+
 a_2 F_{b_2,c_2}+
a_3
\end{equation}
where the two $F$ distributed random variates are independent, and
\begin{equation}\label{eq:prop_3_params}
a_1 = \frac{n_H}{n_E-m+1} \qquad
a_2 = \frac{m-1}{n_E-m+2} \qquad
a_3 =\frac{m-1}{(n_E -m)(n_E -m -1)}
\end{equation}
$$
b_1 = 2n_H \qquad
b_2 = 2m-2 \qquad
$$
$$
c_1 = 2n_E-2m+2 \qquad
c_2 = 2n_E-2m+4 \qquad
$$
\end{proposition}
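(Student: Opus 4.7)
The strategy adapts the small-parameter perturbation argument of Johnstone and Nadler \cite{Johnstone_Nadler} to the complex double-matrix case, with the role of $\sigma\to 0$ played by $\epsilon:=1/\sqrt{1+\lambda}\to 0$. By the joint unitary invariance of $H$, $E$ and the eigenvalue functional I may assume $v=e_1$, so that $\Sigma_H=\mathrm{diag}(1+\lambda,1,\ldots,1)$ and $H=\Sigma_H^{1/2}\tilde W\Sigma_H^{1/2}$ with $\tilde W\sim\mathcal{CW}_m(n_H,I)$ independent of $E$. Decomposing both matrices into their $(1,1)$ entry and their $(m-1)\times(m-1)$ lower-right block and applying the Schur complement formula to $\det(H-\ell E)=0$ yields a scalar equation for $\ell$; I expand the middle inverse $(\tilde W_{22}-\ell E_{22})^{-1}$ as a Neumann series in $\tilde W_{22}/\ell$ appropriate for the largest generalized eigenvalue.

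Writing $e:=E_{21}$, $F:=E_{22}$, $w:=\tilde W_{21}$, $G:=\tilde W_{22}$, $S_E:=E_{11}-e^\dagger F^{-1}e$, and $S_W:=\tilde W_{11}-w^\dagger G^{-1}w$, the key algebraic step is a completion of squares using the identity
\begin{equation*}
(1+\lambda)w^\dagger G^{-1}w-2\sqrt{1+\lambda}\,\mathrm{Re}(w^\dagger F^{-1}e)+e^\dagger F^{-1}GF^{-1}e=\tilde w^\dagger G^{-1}\tilde w,\quad\tilde w:=\sqrt{1+\lambda}\,w-GF^{-1}e.
\end{equation*}
Combined with the Wishart identity $\tilde W_{11}=S_W+w^\dagger G^{-1}w$, this rearranges the expanded characteristic equation into
\begin{equation*}
\ell\,S_E=(1+\lambda)S_W+\tilde w^\dagger G^{-1}\tilde w+\frac{1+\lambda}{\ell}\,w^\dagger F^{-1}w+O(\sqrt{1+\lambda}/\ell),
\end{equation*}
which already exposes the three-term structure of~\eqref{eq:prop_3}.

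The distributions are then read off from the complex Bartlett decomposition. For $\tilde W$, $S_W\sim\chi^2_{2(n_H-m+1)}/2$ is independent of $(w,G)$ and, conditional on $G$, $w\sim\mathcal{CN}(0,G)$; so conditional on $(G,F,e)$ the vector $\tilde w$ is complex Gaussian with mean $-GF^{-1}e$ and covariance $(1+\lambda)G$, and $(1+\lambda)^{-1}\tilde w^\dagger G^{-1}\tilde w$ is a noncentral $\chi^2_{2(m-1)}/2$ whose noncentrality parameter $e^\dagger F^{-1}GF^{-1}e/(1+\lambda)$ vanishes in the limit. Together with the independent $S_W$ this gives a central $\chi^2_{2n_H}/2$; and the analogous decomposition for $E$ gives $2S_E\sim\chi^2_{2(n_E-m+1)}$ independent of $(F,e)$, so after dividing by $S_E$ the leading term matches $(1+\lambda)a_1 F_{b_1,c_1}$. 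The middle term arises by substituting the leading value of $\ell$ into $(1+\lambda)/\ell\cdot w^\dagger F^{-1}w$ and using unitary invariance of $F$ to write $w^\dagger F^{-1}w=\|w\|^2/S'_F$ with $2S'_F\sim\chi^2_{2(n_E-m+2)}$ independent of $w$; combined with the $\chi^2_{2(m-1)}$ from the transverse degrees of freedom of $\tilde W$ this reproduces $a_2 F_{b_2,c_2}$. Finally $a_3$ arises as the deterministic offset generated by the expected value of the noncentrality bias, evaluated using $\mathbb{E}[F^{-1}]=(n_E-m+1)^{-1}I$ and the complex inverse-Wishart second-moment identity.

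The main obstacle is the completion of squares above. Individually, the cross-term $-2\sqrt{1+\lambda}\,\mathrm{Re}(w^\dagger F^{-1}e)$ is of order $\sqrt{\lambda}$ and would spoil any $O(1)$ accuracy of the approximation, but the identity shows that it is exactly compensated by the other Schur-generated terms, so that only a vanishing noncentrality parameter survives. A secondary subtlety is verifying the asymptotic independence of the two $F$-distributed building blocks in the stochastic-approximation sense; this relies on the fact that the relevant Schur complements live on different blocks of $\tilde W$ and $E$ and are independent by the Bartlett decomposition.
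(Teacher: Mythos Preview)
Your approach is correct and takes a genuinely different route from the paper's. The paper writes $H_\epsilon=\epsilon^2 H=A_0+\epsilon A_1+\epsilon^2 A_2$ and invokes Kato's analytic perturbation theory to expand the largest eigenvalue of $E^{-1}H_\epsilon$ and its eigenvector in powers of $\epsilon$; the resulting coefficients are functionals $E^{11},E^{b1},E^{bb}$ of $M^\dagger E^{-1}M$ with $M=[e_1,\hat b]$, and their distributions are read off from two auxiliary lemmas (one giving the Wishart law of $(M^\dagger E^{-1}M)^{-1}$, the other computing $\mathbb{E}[e_1^T E^{-1}A_2 E^{-1}e_1/E^{11}]$), with the $O(\epsilon)$ mean-zero cross-term simply discarded. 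Your route---Schur reduction of $\det(H-\ell E)=0$, Neumann expansion in $1/\ell$, and the Bartlett decomposition---is more elementary in that it avoids Kato's theorem entirely and stays in block-Wishart language; your completion of squares is a cleaner way to dispose of the dangerous $O(\sqrt\lambda)$ cross-term than dropping it by hand. The two routes land on the same three pieces: after block inversion your noncentrality-bias term $e^\dagger F^{-1}GF^{-1}e/S_E$ is exactly the paper's $e_1^TE^{-1}A_2E^{-1}e_1/E^{11}$. Two places in your sketch need tightening. First, the middle term works because $(1+\lambda)/\ell\to S_E/\tilde W_{11}$ and $\|w\|^2/\tilde W_{11}\sim\tfrac12\chi^2_{2(m-1)}$ \emph{independently} of $\tilde W_{11}$ (this is precisely the paper's normalization $b=w/\sqrt z$), so the $\tilde W_{11}$ cancels---your phrase ``transverse degrees of freedom'' hides this cancellation. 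Second, independence of the two $F$ variates is more delicate than ``different blocks'': you need that $S_E\perp(e,F)$, that $\|w\|^2/\tilde W_{11}\perp(\tilde W_{11},\hat w)$, and that $S_F'$ depends on $E$ only through $F$ and on $\tilde W$ only through $\hat w$.
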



\begin{proposition}\label{prop:4}
Suppose that $H \sim \mathcal{CW}_m(n_H, I, \omega {\bf v}{\bf v}^\dagger)$ and $E \sim \mathcal{CW}_m(n_E, I)$ are independent complex Wishart matrices, with $n_{E} > m+1$, $\omega>0$ and $||{\bf v}|| = 1$. Let $\ell_1$ be the largest eigenvalue of $E^{-1}H$. Then, with $(m, n_H, n_E)$ fixed, as $\omega \rightarrow \infty$
\begin{equation}
\label{eq:prop_4}
 \ell_1(\omega) \approx
a_1 F_{b_1, c_1}(2\omega)+
a_2 F_{b_2, c_2}+
a_3
\end{equation}
where the two $F$ distributed random variates are independent and the parameters $a_i, b_i, c_i$ are as defined in Eq. (\ref{eq:prop_3_params}).
\end{proposition}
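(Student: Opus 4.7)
The plan is to mirror the small-noise perturbation strategy underlying Propositions~\ref{prop:2} and~\ref{prop:3}, combining the non-central single-matrix analysis of Proposition~\ref{prop:2} with the block decomposition used in the double-matrix central case of Proposition~\ref{prop:3}. By unitary invariance of both Wishart distributions I may assume $\mathbf{v}=e_1$. Write $H=XX^{\dagger}$ with $X\in\mathbb{C}^{m\times n_{H}}$ a complex Gaussian matrix whose $(1,1)$ entry has mean $\sqrt{\omega}$ and all other entries have zero mean, all with unit variance and mutually independent. All of the non-centrality is then carried by this single entry, and the rows of $X$ indexed $2,\dots,m$ are pure standard complex Gaussian noise, independent of the first row.

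Partition $H$ and $E^{-1}$ in the basis $(e_1,e_1^{\perp})$,
\[
H=\begin{pmatrix} h_{11} & \mathbf{h}_{12}^{\dagger}\\ \mathbf{h}_{12} & H_{22}\end{pmatrix},\qquad
E^{-1}=\begin{pmatrix} q_{11} & \mathbf{q}_{12}^{\dagger}\\ \mathbf{q}_{12} & Q_{22}\end{pmatrix}.
\]
The scalar $h_{11}$ is exactly $\chi^{2}_{2n_{H}}(2\omega)$-distributed and is $\Theta(\omega)$ as $\omega\to\infty$, while $\mathbf{h}_{12}$ and $H_{22}$ depend only on the noise rows of $X$ and retain a standard central complex Wishart structure independent of $h_{11}$. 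On the $E$ side, classical inverse-Wishart identities give $q_{11}^{-1}\sim\chi^{2}_{2(n_{E}-m+1)}$, with $\mathbf{q}_{12}/q_{11}$ a complex Gaussian and $Q_{22}-\mathbf{q}_{12}\mathbf{q}_{12}^{\dagger}/q_{11}$ a central complex inverse Wishart of dimension $m-1$, all three mutually independent and independent of $H$.

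Next I expand $\ell_1(E^{-1}H)$ in powers of $1/h_{11}$ using standard matrix perturbation theory, as in the proof of Proposition~\ref{prop:3}. The leading contribution is $h_{11}q_{11}$, which with the parameter choices in \eqref{eq:prop_3_params} is exactly $a_1 F_{b_1,c_1}(2\omega)$. The $O(1)$ correction reduces to an $(m-1)$-dimensional central generalised-eigenvalue problem on the complement of $e_1$ and contributes the independent summand $a_2 F_{b_2,c_2}$; this step is identical to its counterpart in Proposition~\ref{prop:3} because the complement block of $H$ carries no non-centrality. The deterministic term $a_3$ is produced, again as in Proposition~\ref{prop:3}, by evaluating the expectation of the next order of the perturbation series.

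The main obstacle is the careful bookkeeping required to check that the two $F$-distributed random variables in \eqref{eq:prop_4} are genuinely independent, and that the cross-block terms generated by the perturbation either cancel or are absorbed into the three listed summands at the relevant order. The key simplification, and the reason $a_1,a_2,a_3$ and the auxiliary $F_{b_2,c_2}$ match those of Proposition~\ref{prop:3} verbatim, is that the non-centrality lives entirely in the scalar block $h_{11}$: the only change relative to Proposition~\ref{prop:3} is the replacement of the central $\chi^{2}_{2n_{H}}$ that feeds $F_{b_1,c_1}$ by the non-central $\chi^{2}_{2n_{H}}(2\omega)$, turning $F_{b_1,c_1}$ into $F_{b_1,c_1}(2\omega)$.
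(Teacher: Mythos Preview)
Your overall strategy is the paper's: reduce to $\mathbf{v}=e_1$, run a perturbation expansion with small parameter $\epsilon^2=1/\omega$, and observe that the only change relative to Proposition~\ref{prop:3} is that the scalar $z=h_{11}$ driving the leading term becomes $\tfrac12\chi^2_{2n_H}(2\omega)$ instead of $\tfrac12\chi^2_{2n_H}$. That last sentence of your proposal is exactly the punchline of the paper's combined proof of Propositions~\ref{prop:3} and~\ref{prop:4}.

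Two points need correction, however. First, your independence claim is wrong: $\mathbf{h}_{12}$ does \emph{not} depend only on the noise rows of $X$; its $k$th entry is $\sum_j X_{1j}\overline{X_{kj}}$ and hence mixes the (non-central) first row with the noise rows, so $\mathbf{h}_{12}$ is not independent of $h_{11}$. What \emph{is} true---and what the paper actually uses---is that the normalized vector $b=h_{11}^{-1/2}\mathbf{h}_{12}$ is $\mathcal{CN}(0,I_{m-1})$ independently of $h_{11}$, via a rotation argument on the columns of $X$. This distinction matters for the bookkeeping you flag as the main obstacle.

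Second, your description of the $O(1)$ correction as ``an $(m-1)$-dimensional central generalised-eigenvalue problem'' is not what the paper does. The $\epsilon^2$ term in the expansion splits into two pieces: one is the quadratic form $(E^{11}E^{bb}-|E^{b1}|^2)/E^{11}$, which the paper identifies via a $2\times 2$ Wishart (Lemma~\ref{lemma:2}) as $2\|b\|^2/\chi^2_{2n_E-2m+4}$, yielding $a_2F_{b_2,c_2}$; the other is $e_1^{T}E^{-1}A_2E^{-1}e_1/E^{11}$, which is not handled distributionally but replaced by its mean $a_3$ via Lemma~\ref{lemma:3}. Both of these live at the same order, not at successive orders as your proposal suggests.
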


\begin{remark}
In the limit as $n_E \rightarrow \infty$, the two F-distributed random variables in (\ref{eq:prop_3}) and (\ref{eq:prop_4}) converge to $\chi^2$ 
distributed random variables, thus recovering the leading order terms in (\ref{eq:prop_1}) 
and (\ref{eq:prop_2}), respectively.
\end{remark}

\subsection{On the leading canonical correlation coefficient}
Let $\mathbf{x}_i\sim\mathcal{CN}\left(0,\Sigma\right),\; i=1,2,\cdots,n+1$ denote complex valued multivariate Gaussian observations on $m=p+q$ variables, where without loss of generality we assume that \(p\leq q\). Let us denote the corresponding sample covariance matrix by $S$. In this subsection we consider the fifth setting of Table \ref{table:4_cases} and study the largest sample canonical correlation coefficient between the first group of \(p\) variables and the second group of \(q\) variables, in the presence of a single large canonical correlation coefficient in the population. 

Now, in the presence of a single large population canonical correlation coefficient, which we denote by $\rho$, one can decompose $\Sigma$ as\footnote{Since the canonical correlation is invariant under nonsingular linear transformations, without loss of generality, we can choose this canonical form for the matrix $\Sigma$ .}
\begin{align}
\Sigma=\left(\begin{array}{cc}
I_p & \tilde P\\
\tilde P^\dagger & I_q
\end{array}\right)
\end{align}
where $\tilde P=\left(P \;\;\;\mathbf{0}_{p\times (q-p)}\right)$ with $P=\text{diag}\left(\rho,0,\cdots,0\right)\in\mathbb{R}^{p\times p}$. Similarly, one can decompose the sample covariance matrix as
\begin{align}
nS=\left(\begin{array}{cc}
Y^\dagger Y & Y^\dagger X\\
X^\dagger Y & X^\dagger X
\end{array}
\right)
\end{align}
where $Y\in\mathbb{C}^{n\times p}$ and $X\in \mathbb{C}^{n\times q}$ represent the first $p$ variables and the remaining $q$ variables, respectively. Clearly, the sample canonical correlation coefficients, $r_1,r_2,\cdots,r_p$ are the positive square roots of the eigenvalues of $(Y^\dagger Y)^{-1}Y^\dagger X(X^\dagger X)^{-1}X^\dagger Y$. Our objective is to analyze the largest sample canonical correlation coefficient under the above setting (i.e., in the presence of a single dominant population canonical correlation coefficient). As shown below, a single dominant population canonical correlation coefficient amounts to having a rank-one non-centrality matrix.

Now we can write the squared sample canonical correlation coefficients (i.e., $r_i^2$) as the roots of the following characteristic equation
\begin{align}
\det\left(r^2 Y^\dagger Y-Y^\dagger Q Y\right)=0
\end{align}
where $Q=X^\dagger\left(X^\dagger X\right)^{-1} X$. For convenience, let us introduce $H=Y^\dagger Q Y$ and $ E=Y^\dagger(I_p-Q) Y$.  With this notation, we obtain the modified characteristic equation as
\begin{align}
\det\left(H-r^2(H+E)\right)=0.
\end{align}
This in turn reveals that the study of the largest root of the above equation is equivalent to study of the largest root of $E^{-1}H$. This fact can be further delineated using the relation $\ell_1=r_1^2/(1-r_1^2)$, where $\ell_1$ is the largest root of $E^{-1}H$.

The following complex analog of a result given in \cite{Johnstone_Nadler} is also important in the sequel
\begin{align}
Y|X\sim \mathcal{CN}\left(X \tilde P^\dagger, \Phi\right)
\end{align}
where $\Phi=I_p-P^2$. Therefore, we conclude that
\begin{align}
\label{HEcondx}
H|X&\sim \mathcal{CW}_p\left(q,\Phi,\Omega\right)\nonumber\\
E|X& \sim \mathcal{CW}_p\left(n-q, \Phi\right)
\end{align}
are independent with the non-centrality matrix given by
\begin{align}
\Omega=\Phi^{-1} \tilde P  X^\dagger X \tilde P ^\dagger=\frac{\rho^2}{1-\rho^2} \left( X^\dagger X\right)_{11}\mathbf{e}_1\mathbf{e}_1^{'}=\omega\; \mathbf{e}_1\mathbf{e}_1^{'}
\end{align}
where $\omega=\frac{\rho^2}{1-\rho^2} \left( X^\dagger X\right)_{11}$. Since $X^\dagger X\sim \mathcal{CW}_q\left(n,I_q\right)$, we have $\left(X^\dagger X\right)_{11}\sim \frac{1}{2}\chi^2_{2n}$.
The following proposition gives the distribution of the largest sample canonical correlation in the presence of a single population canonical correlation.

\begin{proposition}\label{canonical}
Let $\ell_1=\frac{r_1^2}{1-r_1^2}$, where $r_1$ is the largest sample canonical correlation between two groups of size $p\leq q$ computed from $n+1$ i.i.d. observations with $\nu=n-p-q>1$. Then in the presence of a single large population correlation coefficient $\rho$ between the two groups, asymptotically as $\rho\to1$,
\begin{align}
\ell_1\left(E^{-1}H\right)\approx a_1 F_{b_1,c_1}^\chi \left(\frac{\rho^2}{1-\rho^2},2n\right)+a_2F_{b_2,c_2}+a_3
\end{align}
where
\begin{align*}
a_1&=q/(\nu+1),\;\;\; a_2=(p-1)/(\nu+2),\;\;\; a_3=(p-1)/\nu(\nu-1),\\
b_1&=2q,\;\;\; b_2=2p-2,\;\; c_1=2(\nu+1),\;\;\; c_2=2(\nu+2).
\end{align*}
Here we have used the notation $F_{a,b}^\chi (c,n)$ to describe the following general class of  probability densities 
\begin {align}
\label{Fchidef}
\frac{\chi^2_{a}(Z)/a}{\chi^2_b/b}\sim F_{a,b}^\chi (c,n)
\end{align}
where $Z\sim c\chi^2_n$ and all the chi-squared variables are independent. 
\end{proposition}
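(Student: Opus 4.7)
The plan is to reduce Proposition~\ref{canonical} to Proposition~\ref{prop:4} via a conditioning-then-mixing argument on the external block $X$. First I would whiten the $p\times p$ covariance $\Phi=\mathrm{diag}(1-\rho^{2},1,\ldots,1)$ by setting $\tilde H=\Phi^{-1/2}H\Phi^{-1/2}$ and $\tilde E=\Phi^{-1/2}E\Phi^{-1/2}$, observing that $\ell_{1}(E^{-1}H)=\ell_{1}(\tilde E^{-1}\tilde H)$ by similarity. Starting from the conditional laws (\ref{HEcondx}), one checks that under the mean-matrix change $M\mapsto M\Phi^{-1/2}$ the noncentrality transforms as $\Omega\mapsto \Phi^{1/2}\Omega\Phi^{-1/2}$; since $\Omega=\omega\,\mathbf{e}_{1}\mathbf{e}_{1}^{\dagger}$ is supported on the first coordinate, on which $\Phi$ acts as a scalar, the result is again $\omega\,\mathbf{e}_{1}\mathbf{e}_{1}^{\dagger}$. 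Hence, conditional on $X$,
\begin{equation*}
\tilde H\sim\mathcal{CW}_{p}\!\left(q,I_{p},\omega\,\mathbf{e}_{1}\mathbf{e}_{1}^{\dagger}\right),\qquad \tilde E\sim\mathcal{CW}_{p}(n-q,I_{p}),
\end{equation*}
are independent, with $\omega=\frac{\rho^{2}}{1-\rho^{2}}(X^{\dagger}X)_{11}$.

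Second, I would apply Proposition~\ref{prop:4} to this conditional problem under the identifications $(m,n_{H},n_{E})=(p,q,n-q)$ and $\mathbf{v}=\mathbf{e}_{1}$. The hypothesis $\nu=n-p-q>1$ is exactly the required $n_{E}>m+1$, and a direct substitution recovers the constants $a_{i},b_{i},c_{i}$ listed in Proposition~\ref{canonical}. Since $(X^{\dagger}X)_{11}>0$ almost surely, $\omega\to\infty$ as $\rho\to 1$, so the $\omega\to\infty$ regime of Proposition~\ref{prop:4} is legitimate, yielding conditional on $X$
\begin{equation*}
\ell_{1}\approx a_{1}F_{b_{1},c_{1}}(2\omega)+a_{2}F_{b_{2},c_{2}}+a_{3},
\end{equation*}
with the two $F$-variates independent of $X$.

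Finally I would undo the conditioning. Because $X^{\dagger}X\sim\mathcal{CW}_{q}(n,I_{q})$ is independent of the internal $F$-variates and $(X^{\dagger}X)_{11}\sim\frac{1}{2}\chi^{2}_{2n}$, one has
\begin{equation*}
2\omega\;\sim\;\frac{\rho^{2}}{1-\rho^{2}}\,\chi^{2}_{2n},
\end{equation*}
which is precisely the mixing law of the form $Z\sim c\,\chi^{2}_{k}$ used in the definition of $F^{\chi}_{\cdot,\cdot}$ in (\ref{Fchidef}), with $c=\rho^{2}/(1-\rho^{2})$ and chi-squared index $k=2n$. Substituting this random noncentrality for $2\omega$ in the previous display converts the first term into $a_{1}F^{\chi}_{b_{1},c_{1}}\!\bigl(\rho^{2}/(1-\rho^{2}),\,2n\bigr)$, giving exactly the stated expression. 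The only real subtlety in this route is the first step, namely checking that whitening by $\Phi^{-1/2}$ preserves the rank-one form of the noncentrality so that Proposition~\ref{prop:4} can be quoted verbatim; everything afterwards is substitution of parameters followed by mixing over the scalar $(X^{\dagger}X)_{11}$.
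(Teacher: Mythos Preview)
Your proposal is correct and follows essentially the same route as the paper: condition on $X$, invoke Proposition~\ref{prop:4} with $(m,n_H,n_E)=(p,q,n-q)$, then uncondition using $(X^\dagger X)_{11}\sim\tfrac12\chi^2_{2n}$. The one place where you are more careful than the paper's own proof is the whitening step: the conditional laws in (\ref{HEcondx}) have covariance $\Phi\neq I_p$, so a direct appeal to Proposition~\ref{prop:4} (which is stated for $\Sigma=I$) requires exactly the check you do, that $\Phi^{1/2}\Omega\Phi^{-1/2}=\omega\,\mathbf e_1\mathbf e_1^\dagger$ because $\Omega$ lives on the coordinate where $\Phi$ is scalar. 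The paper's proof takes this reduction for granted; your explicit verification is a welcome addition, not a deviation.
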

Moreover, we have the following remark on the distribution of $F_{b_1,c_1}^\chi \left(\frac{\rho^2}{1-\rho^2},2n\right)$.

\begin{remark}
It is not difficult to show that the probability density of $X\sim F_{b_1,c_1}^\chi \left(\frac{\rho^2}{1-\rho^2},2n\right)$ is given by
\begin{align}
f_X(x)=\frac{(1-\rho^2)^n}{\mathcal{B}\left(\frac{c_1}{2},\frac{b_1}{2}\right)}
\left(\frac{c_1}{b_1}\right)^{\frac{c_1}{2}} \frac{x^{\frac{b_1}{2}-1}}{\left(x+\frac{c_1}{b_1}\right)^{\frac{1}{2}(c_1+b_1)}}
{}_2F_1\left(n,\frac{1}{2}(c_1+b_1);\frac{b_1}{2};\frac{x\rho^2}{x+\frac{c_1}{b_1}}\right)
\end{align}
where ${}_2F_1(a,b;c;z)$ is the Gauss hypergeometric function and $\mathcal{B}(p,q)$ is the beta function.
\end{remark}

\section{On the inner product between the sample and population eigenvectors}\label{sec:inner}

We now consider the relation between the leading sample eigenvector and the population eigenvector. In most practical scenarios, knowledge of the exact population covariance matrix is not available and in particular $\mathbf{v}$ is unknown. Therefore, it is common to use the sample eigenvalues/eigenvectors instead of the population analogs. In such situations, key quantity of interest is the correlation between the leading sample eigenvector and its population counterpart. This correlation can be represented as follows
\begin{align}
\label{R}
R&=\frac{|\hat{\mathbf{v}}^\dagger \mathbf{v}|^2}{||\hat{\mathbf{v}}||^2}
\end{align}
where $\hat{\mathbf{v}}$ denotes the sample eigenvector corresponding to the largest eigenvalue \(\ell_1\). 

This quantity is of considerable interest both theoretically and practically. Theoretically, the large sample almost sure limits in the random matrix setting (i.e., as $m,n_H\to\infty$ such that $m/n_H\to \kappa>0$) were derived in  \cite{Paul_2007} and  \cite{nadler2008finite}. These results show that $R$ does not converge to $1$ (i.e.,$\hat{\mathbf{v}}$ is inconsistent).
For a practical application, the quantity $R$ is of paramount importance in the design of adaptive beamformers (ABFs) in array processing. In this respect, the dominant mode rejection (DMR) ABF derives its weight vector using the eigen-decomposition of the sample covariance matrix \cite{Van_Trees}. Since one of the main purposes of the beamformers is to eliminate loud interferers coming from undesired directions other than the steering direction, the DMR ABF creates deep notches along the directions of loud interferers. As shown in \cite{wage2014snapshot}, an important parameter which determines the  depth of attenuation of the interferers is the correlation between the sample eigenvectors and unobservable population eigenvectors. Moreover, in the presence of a single dominant interferer, the population covariance matrix takes the form of a rank one spiked model (see \cite[Eq. 17]{wage2014snapshot}). Therefore, it is important to understand the behavior of $R$ under the rank one spiked framework. 

Motivated by the above facts, in what follows we develop stochastic approximations to $R$ in two scenarios depending on  $\hat{\mathbf{v}}$. Specifically, in the first scenario, $\hat{\mathbf{v}}$ is the eigenvector corresponding to the largest eigenvalue of $\mathcal{CW}_m(n,\lambda \mathbf{vv}^\dagger +\sigma^2 I_m)$ while in the second scenario it is the eigenvector corresponding to the largest eigenvalue of $\mathcal{CW}_m(n,\sigma^2 I_m,(\omega/\sigma^2) \mathbf{vv}^\dagger)$.

Now we have the following proposition in the first scenario.
\begin{proposition}\label{prop:innerv}
Let $H\sim \mathcal{CW}_m(n,\lambda \mathbf{vv}^\dagger+\sigma^2 I_m)$, with $||\mathbf{v}||=1$ and $\lambda>0$. Let $\hat{\mathbf{v}}$ be the eigenvector corresponding to the largest eigenvalue of $H$. Then, with $(m,n,\lambda)$ fixed, as $\sigma\to 0$
\begin{align}
R\approx \frac{1}{\displaystyle 1+\frac{\sigma^2}{\lambda+\sigma^2} \frac{A}{B}+\frac{2\sigma^4}{\left(\lambda+\sigma^2\right)^2} \frac{AC}{B^2}}
\end{align}
where $A\sim \chi^2_{2m-2}$, $B\sim \chi^2_{2n}$, and $C\sim \chi^2_{2n-2}$ are independent random variables.
\end{proposition}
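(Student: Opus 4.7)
The plan is to mimic the perturbation argument that underlies Proposition~\ref{prop:1}, but now tracking the eigenvector rather than only the eigenvalue. By the rotational invariance of the complex Wishart law I may take $\mathbf{v}=\mathbf{e}_{1}$, so that $H=W^{\dagger}W$ where the rows of $W$ are i.i.d.\ $\mathcal{CN}(0,\mathrm{diag}(\lambda+\sigma^{2},\sigma^{2},\ldots,\sigma^{2}))$. Writing the first column of $W$ as $\sqrt{\lambda+\sigma^{2}}\,\mathbf{y}$ with $\mathbf{y}\sim\mathcal{CN}(0,I_{n})$ and the remaining $m-1$ columns as $\sigma Z$ with $Z\in\mathbb{C}^{n\times(m-1)}$ a standard complex Gaussian independent of $\mathbf{y}$, $H$ acquires the block form
$$H=\begin{pmatrix}(\lambda+\sigma^{2})\|\mathbf{y}\|^{2} & \sigma\sqrt{\lambda+\sigma^{2}}\,\mathbf{a}^{\dagger}\\ \sigma\sqrt{\lambda+\sigma^{2}}\,\mathbf{a} & \sigma^{2}Z^{\dagger}Z\end{pmatrix},\qquad \mathbf{a}:=Z^{\dagger}\mathbf{y}.$$
Partitioning $\hat{\mathbf{v}}=(\hat{v}_{1},\hat{\mathbf{u}})$, the lower block of the eigenvalue equation yields $\hat{\mathbf{u}}=\sigma\sqrt{\lambda+\sigma^{2}}\,(\ell_{1}I_{m-1}-\sigma^{2}Z^{\dagger}Z)^{-1}\mathbf{a}\,\hat{v}_{1}$, and choosing the normalization $\hat{v}_{1}=1$ reduces the claim to expanding $R=1/(1+\|\hat{\mathbf{u}}\|^{2})$ to order $\sigma^{4}$.

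The resolvent expands as a Neumann series in $\sigma^{2}Z^{\dagger}Z/\ell_{1}$, giving
$$\|\hat{\mathbf{u}}\|^{2}=\frac{\sigma^{2}(\lambda+\sigma^{2})}{\ell_{1}^{2}}\,\|\mathbf{a}\|^{2}+\frac{2\sigma^{4}(\lambda+\sigma^{2})}{\ell_{1}^{3}}\,\|Z\mathbf{a}\|^{2}+O(\sigma^{6}).$$
The three chi-squared variables of the proposition now emerge from elementary conditional calculations. First, $B:=2\|\mathbf{y}\|^{2}\sim\chi^{2}_{2n}$. Second, conditionally on $\mathbf{y}$, $\mathbf{a}\sim\mathcal{CN}(0,\|\mathbf{y}\|^{2}I_{m-1})$, so $A:=2\|\mathbf{a}\|^{2}/\|\mathbf{y}\|^{2}\sim\chi^{2}_{2m-2}$ and is independent of $B$. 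Third, decomposing $Z=\mathbf{y}\mathbf{a}^{\dagger}/\|\mathbf{y}\|^{2}+Z_{\perp}$, with $Z_{\perp}$ supported on $\mathbf{y}^{\perp}$ and (conditionally on $\mathbf{y}$) independent of $\mathbf{a}$ with columns i.i.d.\ $\mathcal{CN}(0,I_{n}-\mathbf{y}\mathbf{y}^{\dagger}/\|\mathbf{y}\|^{2})$, gives the orthogonal split
$$\|Z\mathbf{a}\|^{2}=\frac{\|\mathbf{a}\|^{4}}{\|\mathbf{y}\|^{2}}+\|Z_{\perp}\mathbf{a}\|^{2},$$
where $\|Z_{\perp}\mathbf{a}\|^{2}=\|\mathbf{a}\|^{2}C/2$ for some $C\sim\chi^{2}_{2n-2}$ independent of $(A,B)$.

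The main obstacle is that both contributions in the Neumann series must be tracked consistently to order $\sigma^{4}$, which forces the use of the refined form of Proposition~\ref{prop:1} in the present notation, $\ell_{1}=\tfrac{1}{2}(\lambda+\sigma^{2})B+\tfrac{1}{2}\sigma^{2}A+O(\sigma^{4})$. Substituting this into $1/\ell_{1}^{2}$ introduces a spurious contribution $-2\sigma^{4}A^{2}/[(\lambda+\sigma^{2})^{2}B^{2}]$ in the first term, while the orthogonal split of $\|Z\mathbf{a}\|^{2}$ makes the second term equal to $2\sigma^{4}A(A+C)/[(\lambda+\sigma^{2})^{2}B^{2}]$. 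The two $A^{2}$ pieces cancel exactly, leaving
$$\|\hat{\mathbf{u}}\|^{2}=\frac{\sigma^{2}}{\lambda+\sigma^{2}}\frac{A}{B}+\frac{2\sigma^{4}}{(\lambda+\sigma^{2})^{2}}\frac{AC}{B^{2}}+O(\sigma^{6}),$$
which upon inversion gives the announced approximation for $R$. This exact cancellation between the $\sigma^{2}A$ shift of $\ell_{1}$ and the $\|\mathbf{a}\|^{4}/\|\mathbf{y}\|^{2}$ piece of the second Neumann term is the only delicate step; the rest is routine complex Gaussian bookkeeping.
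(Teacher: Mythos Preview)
Your argument is correct. Both you and the paper start by reducing to $\mathbf{v}=\mathbf{e}_1$, fix the normalization $\langle\hat{\mathbf{v}},\mathbf{e}_1\rangle=1$, and expand the eigenvector to order $\sigma^{3}$; the distributional identifications $B=2\|\mathbf{y}\|^{2}\sim\chi^{2}_{2n}$, $A=2\|\mathbf{a}\|^{2}/\|\mathbf{y}\|^{2}\sim\chi^{2}_{2m-2}$, and $C\sim\chi^{2}_{2n-2}$ via the orthogonal split of $Z$ along $\mathbf{y}$ match exactly what the paper obtains through its $(u,v_{1},\ldots,v_{n})$ variables.

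The organizational difference is real, though. The paper invokes Lemma~\ref{lemma:1}, whose eigenvector expansion $\hat{\mathbf{v}}=w_{0}+\sigma w_{1}+\sigma^{3}w_{3}+\cdots$ has \emph{no} $\sigma^{2}$ term (the parity argument $w_{2j}=0$ for $j\ge1$). Consequently the only $\sigma^{4}$ contribution to $\|\hat{\mathbf{v}}\|^{2}$ is the cross term $2\sigma^{4}\mathrm{Re}(w_{1}^{\dagger}w_{3})=2\sigma^{4}\|u\|^{-4}\sum_{j\ge2}|v_{1}^{\dagger}v_{j}|^{2}$, which lands directly on $2\sigma^{4}AC/[(\lambda+\sigma^{2})^{2}B^{2}]$ with no cancellation required. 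Your resolvent/Neumann-series route instead produces two $\sigma^{4}$ pieces---one from expanding $1/\ell_{1}^{2}$ using Proposition~\ref{prop:1}, the other from $\|Z\mathbf{a}\|^{2}$---whose $A^{2}$ parts must be seen to cancel. So the paper's path is shorter because the parity structure of Lemma~\ref{lemma:1} has already absorbed that cancellation, while your approach is more self-contained (it needs only the block eigenvector formula and the leading two terms of $\ell_{1}$) at the cost of tracking that one extra step.
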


The distribution of $R$ corresponding to the second scenario is given by the following proposition.

\begin{proposition}\label{prop:innervnon}
Let $H\sim \mathcal{CW}_m(n,\sigma^2I_m,(\omega/\sigma^2)\mathbf{vv}^\dagger)$, with $||\mathbf{v}||=1$. Let $\hat{\mathbf{v}}$ be the eigenvector corresponding to the largest eigenvalue of $H$. Then, with $(m,n,\omega)$ fixed, as $\sigma\to 0$
\begin{align}
R\approx \frac{1}{\displaystyle 1+ \frac{A}{B_\sigma}+ 2\frac{AC}{B_\sigma^2}}
\end{align}
where $A\sim \chi^2_{2m-2}$, $B_\sigma\sim \chi^2_{2n}(2\omega/\sigma^2)$, and $C\sim \chi^2_{2n-2}$ are independent random variables.
\end{proposition}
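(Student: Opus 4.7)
The plan is to closely mirror the proof of Proposition~\ref{prop:innerv}, adapting each step to the non-central rank-one mean. First I would write $H=YY^\dagger$ with $Y=[\sqrt{\omega}\mathbf{v}+\sigma\mathbf{g}_1,\sigma\mathbf{g}_2,\ldots,\sigma\mathbf{g}_n]$, $\mathbf{g}_i\sim\mathcal{CN}(0,I_m)$ independent, and take $\mathbf{v}=\mathbf{e}_1$ by rotational invariance of $R$. Split $Y$ into its first row $\mathbf{y}_r$ and its lower $(m-1)\times n$ block $Y_L=\sigma G_L$; these are independent, $\mathbf{y}_r$ has one $\mathcal{CN}(\sqrt{\omega},\sigma^2)$ entry and $n-1$ independent $\mathcal{CN}(0,\sigma^2)$ entries, and $G_L$ has i.i.d.\ complex standard Gaussian entries. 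For small $\sigma$ the leading eigenvalue is isolated near $\omega$ and $\hat{v}_1\neq 0$, so one may parameterize $\hat{\mathbf{v}}=(1,\mathbf{w})^T$, giving $R=1/(1+\|\mathbf{w}\|^2)$. The eigenvalue equation then yields the Rayleigh identity $\ell_1=H_{11}+H_{12}\mathbf{w}$ and $\mathbf{w}=(\ell_1 I-H_{22})^{-1}H_{21}$, with $H_{11}=\|\mathbf{y}_r\|^2$, $H_{21}=\sigma G_L\mathbf{y}_r^\dagger$ and $H_{22}=\sigma^2 G_L G_L^\dagger$.

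Next I would identify the three chi-squared variables and establish their independence. Directly $2H_{11}/\sigma^2\sim\chi^2_{2n}(2\omega/\sigma^2)=B_\sigma$. Setting $\mathbf{u}=\mathbf{y}_r^\dagger/\|\mathbf{y}_r\|$ and decomposing $G_L=\mathbf{g}\mathbf{u}^\dagger+G_L P_{\mathbf{u}^\perp}$ with $\mathbf{g}=G_L\mathbf{u}$, conditional on $\mathbf{y}_r$ one has $\mathbf{g}\sim\mathcal{CN}(0,I_{m-1})$ and $W':=G_L P_{\mathbf{u}^\perp}(G_L P_{\mathbf{u}^\perp})^\dagger\sim\mathcal{CW}_{m-1}(n-1,I_{m-1})$ independent, with laws free of $\mathbf{u}$; hence $\mathbf{g}$, $W'$, $\mathbf{y}_r$ are globally independent. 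Put $A=2\|\mathbf{g}\|^2\sim\chi^2_{2m-2}$, and by rotational invariance of $W'$, $C=2(\mathbf{g}/\|\mathbf{g}\|)^\dagger W'(\mathbf{g}/\|\mathbf{g}\|)\sim\chi^2_{2n-2}$ independently of $\mathbf{g}$, so $A,B_\sigma,C$ are mutually independent with the required marginals. In these variables $H_{21}=\sigma\|\mathbf{y}_r\|\mathbf{g}$ gives $\|H_{21}\|^2=\sigma^4 AB_\sigma/4$, and a short computation using $H_{22}=\sigma^2(\mathbf{g}\mathbf{g}^\dagger+W')$ yields $H_{21}^\dagger H_{22}H_{21}=\sigma^6 AB_\sigma(A+C)/8$.

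Finally I would expand. Since $H_{22}=O(\sigma^2)$ and $\ell_1=O(1)$, a Neumann series in $H_{22}/\ell_1$ produces
$$\|\mathbf{w}\|^2=\frac{\|H_{21}\|^2}{\ell_1^2}+\frac{2\,H_{21}^\dagger H_{22}H_{21}}{\ell_1^3}+O(\sigma^6),$$
while iterating the Rayleigh identity once gives $\ell_1=\sigma^2(B_\sigma+A)/2+O(\sigma^4)$. Plugging in, the first term becomes $AB_\sigma/(B_\sigma+A)^2=A/B_\sigma-2A^2/B_\sigma^2+O(\sigma^6)$ and the second becomes $2AB_\sigma(A+C)/(B_\sigma+A)^3=2A^2/B_\sigma^2+2AC/B_\sigma^2+O(\sigma^6)$. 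The $A^2/B_\sigma^2$ contributions cancel exactly, leaving $\|\mathbf{w}\|^2=A/B_\sigma+2AC/B_\sigma^2+O(\sigma^6)$, which yields the stated approximation for $R$.

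The main obstacle is precisely this cancellation: two different $O(\sigma^4)$ terms of size $A^2/B_\sigma^2$ appear, one from correcting $\ell_1$ beyond the naive value $H_{11}$ in the leading Neumann term and one from the $\|\mathbf{g}\|^4$ piece of $H_{21}^\dagger H_{22}H_{21}$; both are of the same order as the final $2AC/B_\sigma^2$ correction, so the Rayleigh identity and the Neumann series must be tracked to consistent order or one obtains the wrong coefficient. A secondary technicality is verifying the mutual independence of $A,B_\sigma,C$, which hinges on the conditional construction of $\mathbf{g}$ and $W'$ relative to $\mathbf{u}$ together with rotational invariance of the complex Wishart distribution.
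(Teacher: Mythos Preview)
Your proposal is correct. The route is a close cousin of the paper's but not identical, so a brief comparison is worthwhile.

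The paper normalizes the leading eigenvector exactly as you do (first component equal to $1$) but then quotes Lemma~1 directly: the analytic perturbation expansion there already gives $w_1=\|u\|^{-1}(0,v_1)^T$ and $w_3=\|u\|^{-3}(0,\sum_{j\ge 2}v_jv_j^\dagger v_1)^T$, from which $\|\hat{\mathbf v}\|^2=1+\sigma^2\|v_1\|^2/\|u\|^2+2\sigma^4\sum_{j\ge 2}|v_1^\dagger v_j|^2/\|u\|^4+\cdots$ and the distributional identifications follow in one line from the proof of Proposition~2. In particular, the cancellation you flag as the main obstacle never appears explicitly: it is absorbed inside Lemma~1 through the $(A_2-\lambda_2 I)$ correction that produces $w_3$, which is precisely what removes the $j=1$ term and leaves only $\sum_{j\ge 2}$.

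Your derivation instead writes the lower block of the eigenvector via the Schur/resolvent identity $\mathbf w=(\ell_1 I-H_{22})^{-1}H_{21}$ and expands by a Neumann series, feeding in the one-step Rayleigh correction for $\ell_1$. This is more self-contained (no appeal to Kato or to Lemma~1) and makes the mechanism of the cancellation transparent: the $-2A^2/B_\sigma^2$ from correcting $\ell_1$ exactly offsets the $\|\mathbf g\|^4$ piece of $H_{21}^\dagger H_{22}H_{21}$. Your independence argument for $(A,B_\sigma,C)$ via the conditional construction of $\mathbf g$ and $W'$ and rotational invariance of $\mathcal{CW}_{m-1}(n-1,I)$ is the same in spirit as the paper's use of the unitary rotation $V=\Xi O$. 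Both approaches yield the identical random variables; the paper's is shorter given Lemma~1, yours is more elementary.
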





\section{Applications}
After establishing approximations to the distribution of Roy's largest root, we now demonstrate their utility in three different engineering applications. The first two applications are concerned with common problems in signal detection, whereas the third is concerned with the outage probability of a rank-one Rician fading MIMO channel.
\label{sec:application}
\subsection{Signal Detection in Spectrum Sensing}\label{sec:spectrum_sensing}
In standard multiuser communication systems, each primary user (PU) is allocated a unique frequency band of the spectrum. By design, this band may be used solely by the corresponding PU. To better utilize the available spectrum, novel cognitive radio dynamic spectrum allocation methods have been proposed in the past decade \cite{Haykin05}. In these schemes, opportunistic secondary users (SU) are allowed to use frequency bands not allocated to them by first sensing whether these are currently in use by their PU's. Several measurement schemes and statistical tests were derived for this task, 
see \cite{zeng2010review},\cite{Gazor} and references therein. 

 One of the proposed test statistics, in particular when the noise level is known and the signal is assumed Gaussian, is simply the largest eigenvalue of the observed data, namely Roy's Largest Root Test (RLRT). Assuming the noise variance is small (i.e., in the high signal-to-noise ratio (SNR) regime), one may then 
easily approximate the power of this test using Propositions 1 and 3. 
  
Let us describe this setting in more detail. Consider a spectrum sensing system with $m$ receiving antennas, that in a short time window samples $n_H$ vectors  $\{ {\bf y}_j \}_{j=1}^{n_H}$, where ${\bf y}_j\in \mathbb{C}^m$.
A common modeling assumption is that the samples are i.i.d. realizations of a random vector taking the form
\begin{equation}
\label{eq:SP_signal_def}
{\bf y} = \sqrt{\lambda}s\textbf{u}+\sigma \textbf{n}
\end{equation}
where $s\sim \mathcal{CN}(0, 1)$, $\textbf{u}\in\mathbb{C}^m$ is  the (normalized)\ channel gain vector between the PU and the antennas, $||{\bf u}||=1$, $\textbf{n}$ is an additive Gaussian noise, $\textbf{n}\sim\mathcal{CN}(0, \Sigma)$, and $\lambda$ is the received signal power. If the PU is inactive, namely  $\lambda = 0$, then all measurements are just noise. Hence, the spectrum sensing task can be formulated as the following hypothesis testing problem
\begin{equation}
\label{eq:SP_hyp_1}
\begin{array}{c l}
\mathcal{H}_0: \lambda = 0 \quad &   \text{PU is inactive} \\
\mathcal{H}_1: \lambda > 0\quad&  \text{PU is active} .
\end{array}
\end{equation}

When $\Sigma$ is known (w.l.o.g. it is assumed to be of the form \(\Sigma=\sigma^2I_m\),) (\ref{eq:SP_hyp_1}) yields the following hypothesis testing problem
\begin{equation}
\begin{array}{c l}
\mathcal{H}_0: &   {\bf y}_j\sim \mathcal{CN}(0, \sigma^{2} I_m) \\
\mathcal{H}_1: &  {\bf y}_j\sim \mathcal{CN}(0, \lambda \textbf{u}\textbf{u}^\dagger +\sigma ^{2}I_m) .
\end{array}
\end{equation}
Under the alternative $\mathcal{H}_1$, the unnormalized sample covariance matrix of the $n_H$ observations follows a complex Wishart distribution,
\begin{equation}
H = YY^\dagger \sim \mathcal{CW}_m(n_H, \lambda \textbf{u}\textbf{u}^\dagger +\sigma ^{2}I_m).
\end{equation}
We may then use Proposition \ref{prop:1} to approximate the power of Roy's largest root test, for a given threshold parameter $\mu$, as
\begin{equation}
P_{D}=\Pr\left[\ell_1(\sigma, \lambda) > \mu|\mathcal{H}_1\right].
\end{equation}
Figure \ref{fig:SS_snr} demonstrates the accuracy of our approximations for several values of signal-to-noise ratio (SNR), where $\mbox{SNR}=\frac{\lambda}{\sigma^2}$.  

Another possible scenario is when the noise covariance matrix $\Sigma$ is arbitrary and unknown but we have $n_E>m+1$ i.i.d. noise only samples $\{{\bf z}_j\}$, generated from ${\bf z} = \textbf{n}$, see \cite{zhao1986detection,Nadakuditi_Silverstein_2010}. Then we can approximate $\Sigma$ by $\frac{1}{n_E}\sum_{j=1}^{n_E}{\bf z}_j{\bf z}_j^\dagger$ and whiten the sample covariance matrix.  Therefore, we get
\begin{equation}
\left(\sum_{j=1}^{n_E}{\bf z}_j{\bf z}_j^\dagger\right)^{-1}\left(\sum_{j=1}^{n_H}{\bf y}_j {\bf y}_j^\dagger\right) = E^{-1}H
\end{equation}
where
\begin{equation}
E\sim \mathcal{CW}_m(n_E, I) \qquad H\sim \mathcal{CW}_m\left(n_H, \frac{\lambda}{\sigma^2} \textbf{u}\textbf{u}^\dagger + I\right).
\end{equation}
Now we can use Proposition 3 to approximate the power of the test
\begin{equation}
P_{D}=\Pr\left[\ell_1\left(\frac{\lambda}{\sigma^2}\right) > \mu|\mathcal{H}_1\right]
\end{equation}
for a given threshold parameter $\mu$.

Unlike previous analyses which heavily relied on the assumption of large number of samples or antennas, the approximations here enable us to analyze scenarios with small and fixed   $n_E, n_H$ and $m$. However, it is noteworthy that our approximations are very tight only for small values of $\sigma$ (i.e., in the high SNR regime). 
Above facts are further illustrated in  Fig. \ref{fig:SS_snr}.

\begin{figure}[t]
\centering
\includegraphics[width=2.85in]{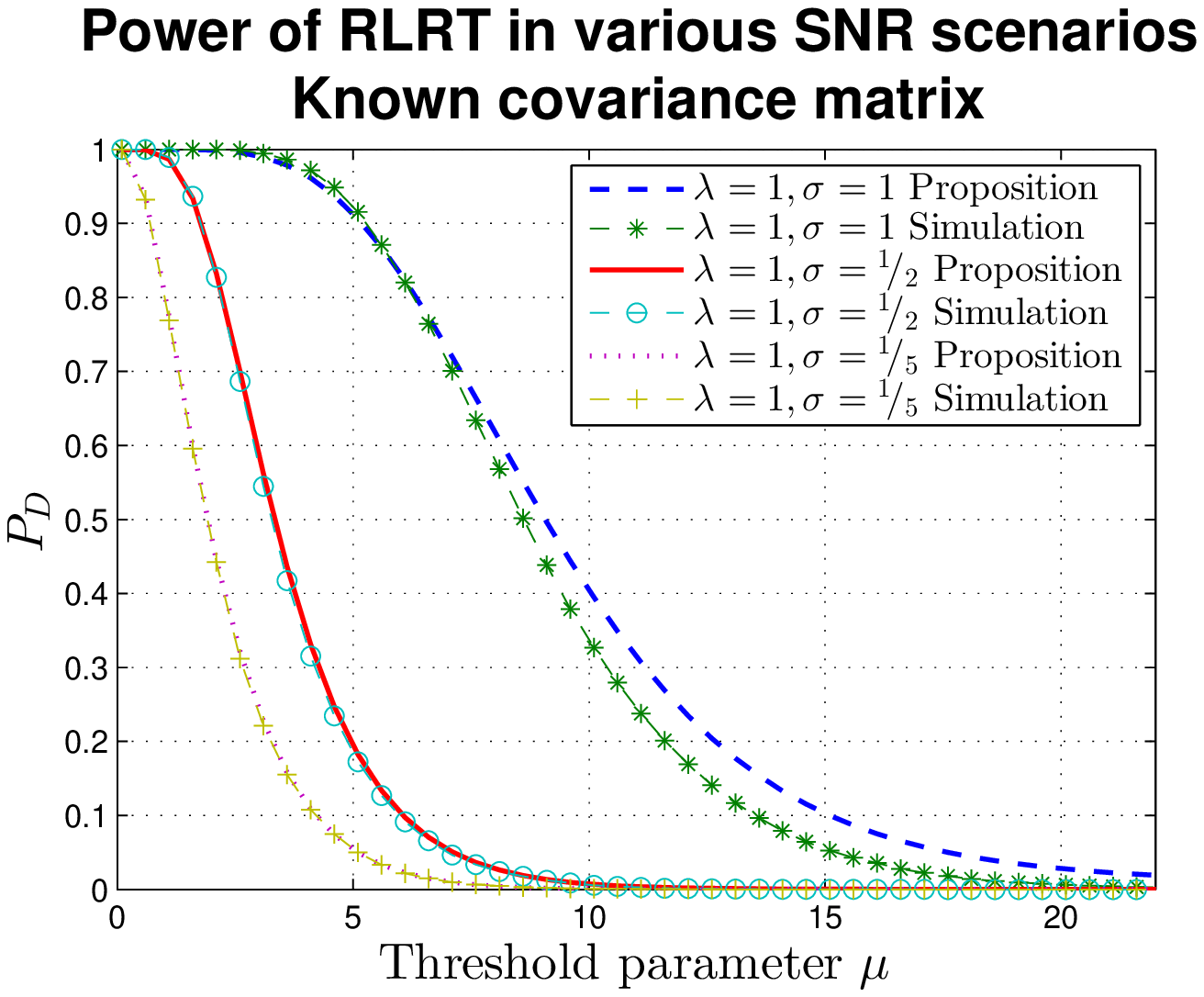}
\includegraphics[width=2.85in]{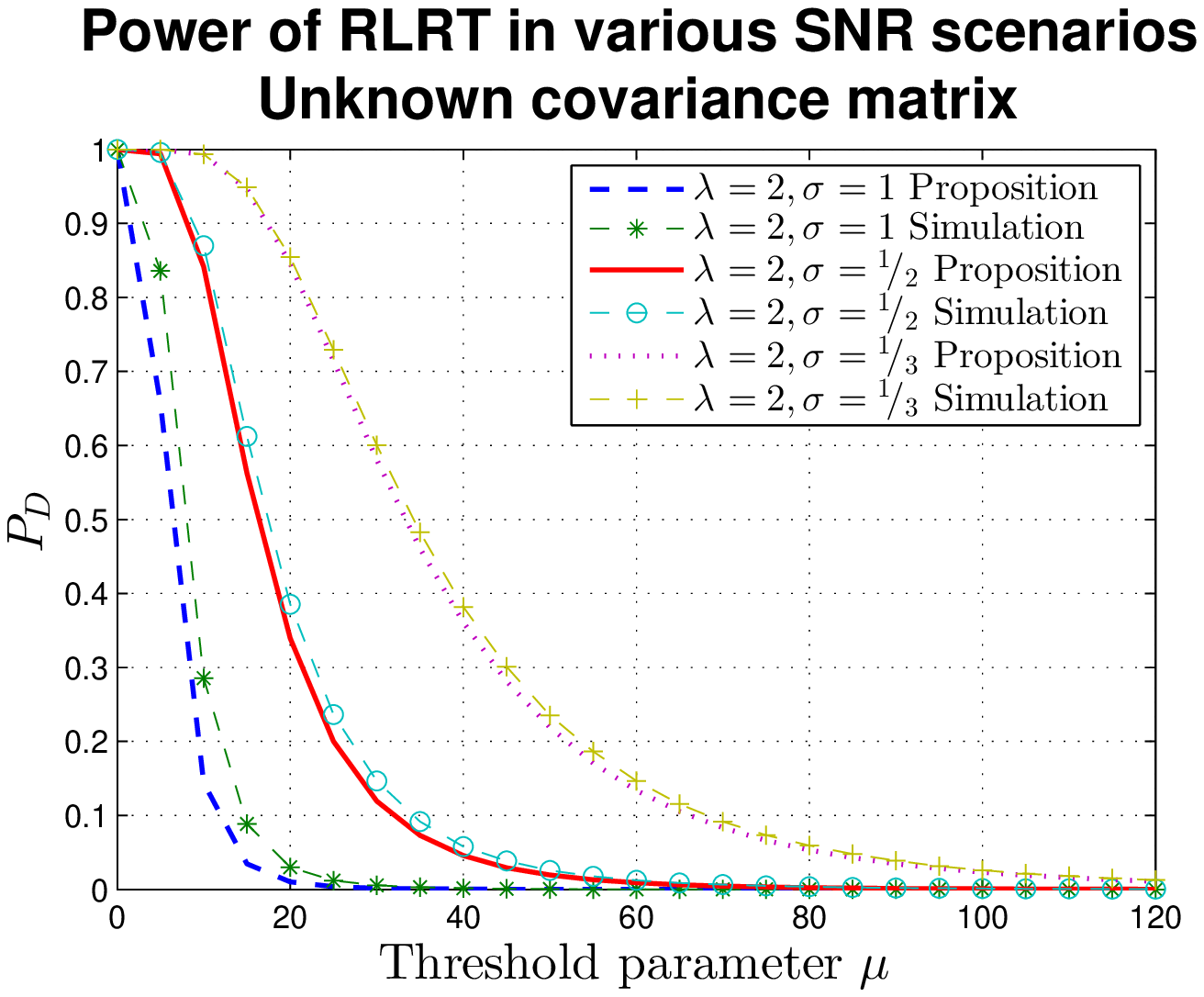}
\caption{Evaluation of RLRT power, by simulations and by our proposed approximations, for various values of SNR, as a function of the threshold \(\mu\). Clearly, as the SNR grows, the accuracy of the approximation increases. }
\label{fig:SS_snr}
\end{figure}

\subsection{Constant Modulus Signal Detection}
In the previous section, it was assumed that the transmitted signal $s$ 
was Gaussian distributed. We now consider a different setting of constant modulus (CM) signals. Here the signal \(s=e^{{\imath}\phi}\), where $\imath=\sqrt{-1}$, and $\phi\in \mathbb{R}$ is an  unknown (possibly random) time-dependent phase. One common example is the well-known FM signal \cite{Sensor_Array_Processing}. As in section \ref{sec:spectrum_sensing}, given $n_H$ measurements, the task is to decide whether they contain a CM signal, or just noise. 

Formally, we assume the availability of $n_{H}$ i.i.d. samples $\{\textbf{y}_j\}_{j=1}^{n_{H}}$  drawn from  $$\textbf{y}=\sqrt{\lambda}\exp(\imath\phi)\textbf{u}+\textbf{n}$$
where $\mathbf{n}\sim \mathcal{CN}(0,\Sigma)$ and $\lambda>0$ is the transmit signal power. The preceding detection problem can be formulated as the following hypothesis testing problem
\begin{equation}
\label{eq:CM_hyp_1}
\begin{array}{c l}
\mathcal{H}_0: \lambda = 0 \quad &   \text{signal is absent} \\
\mathcal{H}_1: \lambda > 0\quad&  \text{signal is present} .
\end{array}
\end{equation}
Although Roy's largest root test is not necessarily the optimal detector for the CM\ signals, due to its simplicity and low computational complexity, it is still a common choice. 
As in section \ref{sec:spectrum_sensing}, we may approximate
its power using our propositions. 

We assume w.l.o.g. that $\Sigma = \sigma^2I$. Then, conditional on the \(n_{H}\) phases $\phi_1,\ldots,\phi_{n_H}$, the sample covariance matrix
$H = \sum_{j=1}^{n_H}\textbf{y}_j{\bf y}_j^\dagger$ follows a non-central Wishart distribution, 
\[
H \sim \mathcal{CW}_m\left(n_H, \sigma^2I_m, \frac{\lambda n_H}{\sigma^2}{\bf vv^\dagger}\right).
\]
Importantly,  this distribution does \textit{not} depend on the phases $\phi_{j}$, If the noise level $\sigma$ is known, then we may use Proposition \ref{prop:2}, with $\omega = \lambda n_H,$ to obtain an approximation to the power of Roy's largest root test for a given threshold parameter $\mu$
\begin{equation}
P_{D}=\Pr\left[\ell_1(\sigma, \omega) > \mu|\mathcal{H}_1\right].
\end{equation} 


If the noise covariance matrix $\Sigma$ is arbitrary and unknown but we have $n_E>m+1$ i.i.d. noise only samples $\{{\bf z}_j\}$, generated from ${\bf z} = \textbf{n}$, we can approximate $\Sigma$ with $\frac{1}{n_E}\sum_{j=1}^{n_E}{\bf z}_j{\bf z}_j^\dagger$ and whiten the sample covariance  matrix. As before, we assume w.l.o.g. that $\Sigma = \sigma^2I_m$, which leads to\begin{equation}
\left(\sum_{j=1}^{n_E}{\bf z}_j{\bf z}_j^\dagger\right)^{-1}\left(\sum_{j=1}^{n_H}{\bf y}_j {\bf y}_j^\dagger\right) = E^{-1}H
\end{equation}
where
\begin{equation}
E\sim \mathcal{CW}_m(n_E, I) \qquad H\sim \mathcal{CW}_m(n_H, I_m, \lambda n_H{\bf u u^\dagger}).
\end{equation}
In this case we use Proposition \ref{prop:4}, with $\omega = \frac{\lambda n_H}{\sigma^2},$ to obtain an approximation to the power of Roy's largest root test for a given threshold parameter $\mu$.


\subsection{Rank-One Rician-Fading MIMO Channel}

The last application we present involves the outage probability of a MIMO channel. Consider a general MIMO communication channel  with $n_T$ transmitters and $n_R$ receivers. The relation between the transmitted signal ${\bf x}$ and the received signal ${\bf y}$ is assumed to be of the form
\begin{equation} {\bf y} = H{\bf x} + \mathbf{n}\ \end{equation}
where  $H$ is the channel matrix of size $n_R \times n_T$ and \(\mathbf{n}\) is a complex Gaussian noise with zero mean and covariance matrix $\sigma_n^2 I_{n_R}$.
Under a common fading model, known as Rican fading \cite{Hansen_Bolcskei}, the matrix $H$ takes the form
$$ H = \sqrt{\frac{K}{K+1}}H_1 + \sqrt{\frac{1}{K+1}}H_2 $$
where the parameter \(K\) and the two matrices $H_1,H_2$ characterize the channel. The matrix $H_1$ represents the specular (Rician) component that typically results from a direct line-of-sight
between transmitter and receiver antennas.
The matrix $H_2$ represents the scattered Rayleigh-fading component which is random.
Assuming fixed sender and receiver locations, the matrix $H_1$ is constant, whereas the random matrix $H_2$ is typically modeled as i.i.d. complex Gaussians with zero mean and variance $\sigma_H^2$.
In this setting, the parameter \(K\) (a.k.a. the Rician factor) represents the ratio of deterministic-to-scattered power of the environment. Moreover, we make the common assumption that $\text{tr}^2(HH^\dagger) = n_R \times n_T$.

The SNR $\mu$ of this channel, under the maximal ratio transmission strategy\footnote{Under this strategy, the transmitter transmits information along the leading eigenvector of $H^\dagger H$.}, is given by \cite{Kang_2003}
\begin{equation}
        \label{eq:MIMO_SNR}
 \mu = \frac{\Omega_D}{\sigma_n^2} \lambda_{\max}
\end{equation}
where $\Omega_D = ||{\bf x}||^2$ is the power of the transmitted vector and $\lambda_{\max}$ is the largest eigenvalue of $HH^\dagger$.
An important quantity, which characterizes  the channel, is the outage probability. The outage probability is defined as the probability of failing to achieve a specified minimal SNR threshold  $\mu_{\min}$ required for satisfactory reception. 
Following Eq. (\ref{eq:MIMO_SNR}), the outage probability $P_{\text{out}}$ can be written as
\begin{equation}
\label{eq:MIMO_P_out}
P_{\text{out}} = \Pr\left( \frac{\Omega_D}{\sigma_n^2} \lambda_{\max} \leq \mu_{\min}\right).
\end{equation}

One particularly interesting case is when the Ricean component $H_1$ is assumed to be of rank one,  $H_1 = {\bf u} {\bf v}^\dagger$, where ${\bf u}\in \mathbb{C}^{n_R}$, ${\bf v} \in \mathbb{C}^{n_T}$. 
In this case, an important design question is which antennas configuration minimizes Eq. (\ref{eq:MIMO_P_out}), under the constraint that the sum of the number of transmitting and receiving antennas is fixed. In this respect, via simulations, 
\cite{Kang_2003} showed that the most preferable configuration is to have equal number of transmitting and receiving antennas. Here we analytically prove this result using the main approximations of this article, under the assumption of small noise (i.e., in the high SNR regime).

\begin{claim}\label{claim:mimo}
Consider a rank-one Rician fading channel with a fixel total number of antennas, $n_T+n_R=N$. Then, for $\sigma_H \ll 1$, the setting $n_T = n_R = N/2$ for $N$ even (or say $n_T = \lfloor N/2\rfloor,n_R = \lceil N/2\rceil$ for $N$ odd) minimizes the outage probability.
\end{claim}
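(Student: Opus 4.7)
The plan is to reduce the outage probability to a strictly monotone function of the spike strength $\omega$, and then maximize $\omega$ over antenna splits $n_T+n_R=N$ by AM--GM. First, write $H=M+\tilde\sigma\,G$ with deterministic mean $M=\sqrt{K/(K+1)}\,\mathbf{u}\mathbf{v}^\dagger$, effective noise scale $\tilde\sigma:=\sigma_H/\sqrt{K+1}$, and $G$ having i.i.d.\ standard complex Gaussian entries. Then $HH^\dagger$ is a rank-one non-central complex Wishart,
\begin{equation*}
HH^\dagger \sim \mathcal{CW}_{n_R}\!\bigl(n_T,\,\tilde\sigma^2 I_{n_R},\,(\omega/\tilde\sigma^2)\,\hat{\mathbf{u}}\hat{\mathbf{u}}^\dagger\bigr), \qquad \omega := \tfrac{K}{K+1}\|\mathbf{u}\|^2\|\mathbf{v}\|^2, \quad \hat{\mathbf{u}}:=\mathbf{u}/\|\mathbf{u}\|.
\end{equation*}
The trace normalization $\mathbb{E}[\mathrm{tr}(HH^\dagger)]=n_T n_R$ forces $\|\mathbf{u}\|^2\|\mathbf{v}\|^2=\gamma(K,\sigma_H)\,n_T n_R$ for a positive constant $\gamma$ that is independent of how the $N$ antennas are partitioned, so $\omega=\alpha\,n_T n_R$ with $\alpha=\alpha(K,\sigma_H)>0$ also split-independent.

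Applying Proposition~\ref{prop:2} with $(m,n)=(n_R,n_T)$ gives, as $\tilde\sigma\to 0$,
\begin{equation*}
\lambda_{\max} = \tfrac{\tilde\sigma^2}{2}\!\left[A + B + \tfrac{BC}{A}\right] + o(\tilde\sigma^4),
\end{equation*}
with $A\sim\chi^2_{2n_T}(2\omega/\tilde\sigma^2)$, $B\sim\chi^2_{2n_R-2}$, $C\sim\chi^2_{2n_T-2}$ independent. Since the non-centrality of $A$ diverges, the CLT for non-central chi-squared variables yields $A \approx 2\omega/\tilde\sigma^2 + 2n_T + (2\sqrt{2\omega}/\tilde\sigma)\,Z$ with $Z$ approximately standard normal. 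Adding $\mathbb{E}[B]=2(n_R-1)$ and noting that $\tfrac{\tilde\sigma^2}{2}\tfrac{BC}{A}=O(\tilde\sigma^4)$, one obtains the Gaussian envelope
\begin{equation*}
\lambda_{\max} = \omega + \tilde\sigma\sqrt{2\omega}\,Z + (N-1)\tilde\sigma^2 + o(\tilde\sigma^2),
\end{equation*}
whose $(N-1)\tilde\sigma^2$ offset is manifestly symmetric in $(n_T,n_R)$. Setting $\tau:=\mu_{\min}\sigma_n^2/\Omega_D$, the outage probability therefore admits the approximation
\begin{equation*}
P_{\mathrm{out}} \approx \Phi\!\left(\frac{\tau-\omega-(N-1)\tilde\sigma^2}{\tilde\sigma\sqrt{2\omega}}\right).
\end{equation*}

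In the high-SNR regime $\tilde\sigma\to 0$ we have $\omega\gg\tau$, so the map $\omega\mapsto(\omega-\tau)/\sqrt{\omega}=\sqrt{\omega}-\tau/\sqrt{\omega}$ is strictly increasing on $(\tau,\infty)$ and $P_{\mathrm{out}}$ is strictly decreasing in $\omega$. Because $\omega=\alpha\,n_T n_R$ with $\alpha$ split-independent, AM--GM yields $n_T n_R \le N^2/4$ with equality iff $n_T=n_R=N/2$ when $N$ is even, and with unique integer maximizer $\{\lfloor N/2\rfloor,\lceil N/2\rceil\}$ when $N$ is odd, giving the claim at leading order. The main obstacle will be controlling the $o(\tilde\sigma^2)$ remainders whose $(n_T,n_R)$-dependence is genuinely asymmetric (those coming from $BC/A$ and the lower-order CLT corrections to $A$); these are dominated by the $\Theta(1)$ gap in $\omega$ between the balanced split and any other integer split, so that for $\sigma_H$ sufficiently small the leading-order ordering is preserved and the stated configuration strictly minimizes $P_{\mathrm{out}}$.
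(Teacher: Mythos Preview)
Your argument is correct and reaches the same conclusion, but the paper's route is shorter and cleaner at the crucial step. After applying Proposition~\ref{prop:2} and dropping the $BC/A$ term (exactly as you do), the paper observes that $X_1+X_2$, being a sum of an independent $\chi^2_{2n_T}(C_2)$ and $\chi^2_{2n_R-2}$, is \emph{exactly} distributed as $\chi^2_{2N-2}(C_2)$. The degrees of freedom $2N-2$ are therefore split-independent, and only the non-centrality $C_2\propto n_Tn_R$ varies; the claim then follows immediately from the well-known stochastic monotonicity of $\chi^2_k(\delta)$ in $\delta$, valid for every threshold $\tau$. Your approach instead layers a Gaussian CLT on top of the Proposition~\ref{prop:2} approximation, which forces you to replace $B$ by its mean, to track the $\omega$-dependence of both mean and variance in $\Phi$, and to invoke the extra hypothesis $\omega>\tau$ for monotonicity of $(\omega-\tau)/\sqrt{\omega}$. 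None of this is wrong, but it is all avoided by the single additivity identity $\chi^2_a(\delta)+\chi^2_b\stackrel{d}{=}\chi^2_{a+b}(\delta)$; you may want to adopt that shortcut.
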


\begin{proof}
Notice that the $j$-th column of $H$ is distributed as
$\mathcal{CN}\left(\sqrt{ \frac{K}{K+1} } {\bf u}_j {\bf v}, \frac{1}{K+1} \sigma^2_H\right)$ and therefore $HH^\dagger$ is distributed as a non-central complex Wishart matrix
\begin{equation}
H H^\dagger \sim \mathcal{CW}_{n_R}(n_T, \alpha^2 I, \beta^2 / \alpha^2 {\bf w} {\bf w}^\dagger )
\end{equation}
where ${\bf w} = {\bf v} / ||{\bf v}||, \alpha^2 = \frac{1}{K+1} \sigma_H^2$ and $\beta^2 = \frac{K}{K+1} ||{\bf u}||^2 ||{\bf v}||^2 =\frac{K}{K+1} n_R  n_T$.

Thus, the matrix $HH^\dagger$ satisfies the conditions of Proposition 2. Hence, for fixed $(n_T, n_R, K)$, as $\sigma_H \rightarrow 0$
\begin{equation}
\label{eq:MIMO_approx}
\mu =  \frac{\Omega_D}{\sigma_n^2} \lambda_{max} =
C_1 \left(X_1 +
X_{2}+
\frac{X_{2}X_3}{X_{1}} \right) +
o(\sigma_H^4)
\end{equation}
where the three random variables $X_1, X_2, X_3$ are independent with the following distributions
$$ X_1\sim\chi_{2n_T}^2(C_2), \qquad X_2\sim\chi_{2n_R-2}^2, \qquad X_3\sim\chi_{2n_T-2}^2$$ with
\begin{equation}
C_1 = \frac{\Omega_D \sigma^2_H}{2 (K+1) \sigma_n^2}, \qquad C_2 = \frac{2\beta^2}{ \alpha^2} = \frac{2 n_R  n_T K}{\sigma_H^2}.
        \label{eq:C1C2}
\end{equation}

Since Eq. (\ref{eq:MIMO_approx}) is accurate for small values of $\sigma_H$, we conclude that $C_{2}\gg1$. Therefore, we may neglect the third term and the remainder terms in  Eq. (\ref{eq:MIMO_approx}) to obtain
\begin{equation}
\label{eq:MIMO_approx2}
\mu \approx
C_1 \left(X_1 +
X_{2} \right) =C_1(\chi_{2n_T}^2(C_2)+\chi_{2n_R-2}^2).
\end{equation}
 Since $X_1$ and $X_2$ are independent, we conclude that $X_1+X_2\sim\chi_{2n_T+2n_R-2}^2(C_2)$. Thus,  
 \begin{align*}
 \mu \propto \chi^2_{2N-2}(C_2).
 \end{align*}
Clearly $P_{out}$ is minimal when the non-centrality parameter $C_2$ is maximal. Since by Eq. (\ref{eq:C1C2}) the parameter $C_2 \propto n_T \cdot n_R$, the claim follows.
\end{proof}

\begin{figure}[t]
  \centering
  \includegraphics[width=2.8in]{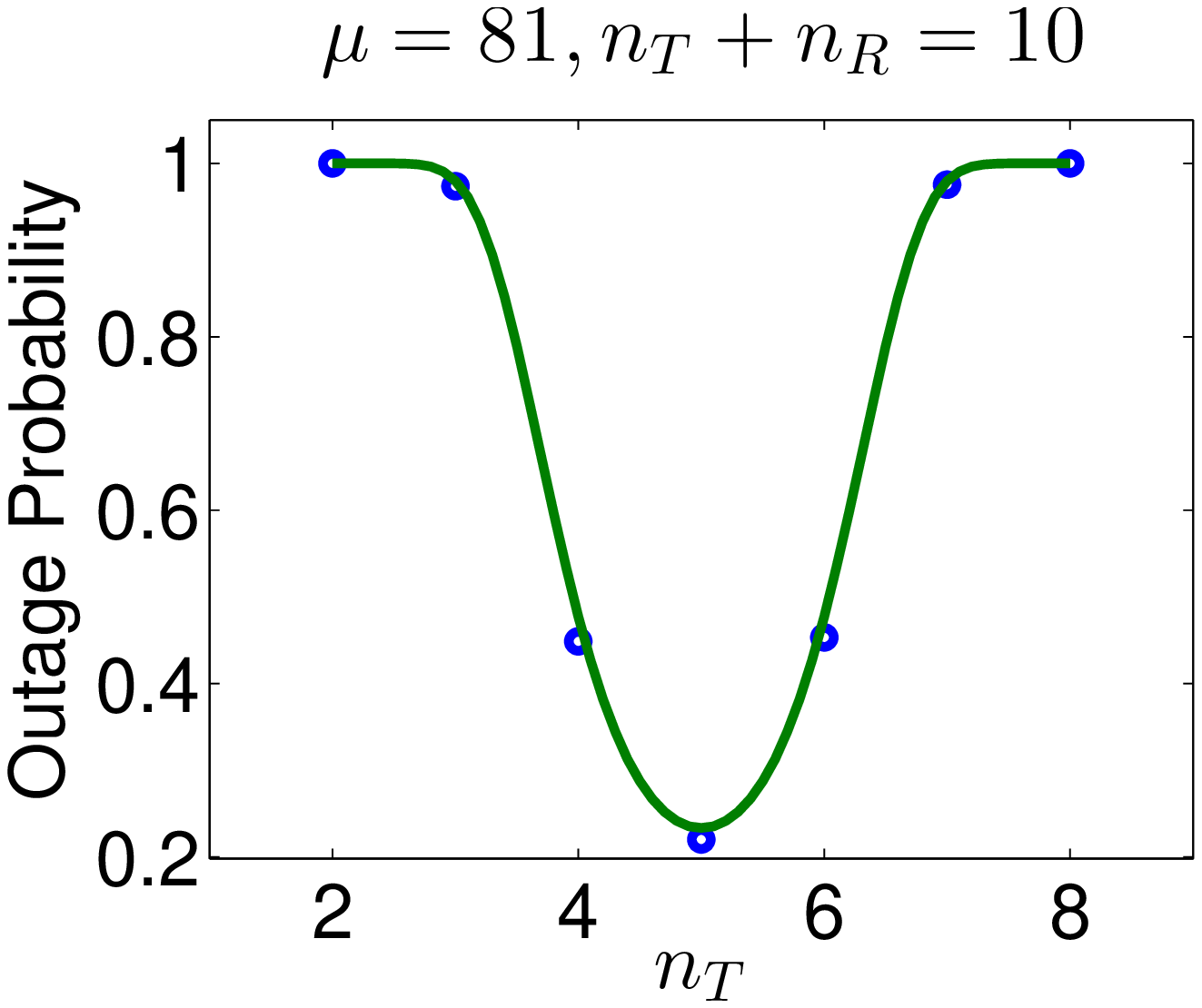}
  \includegraphics[width=2.8in]{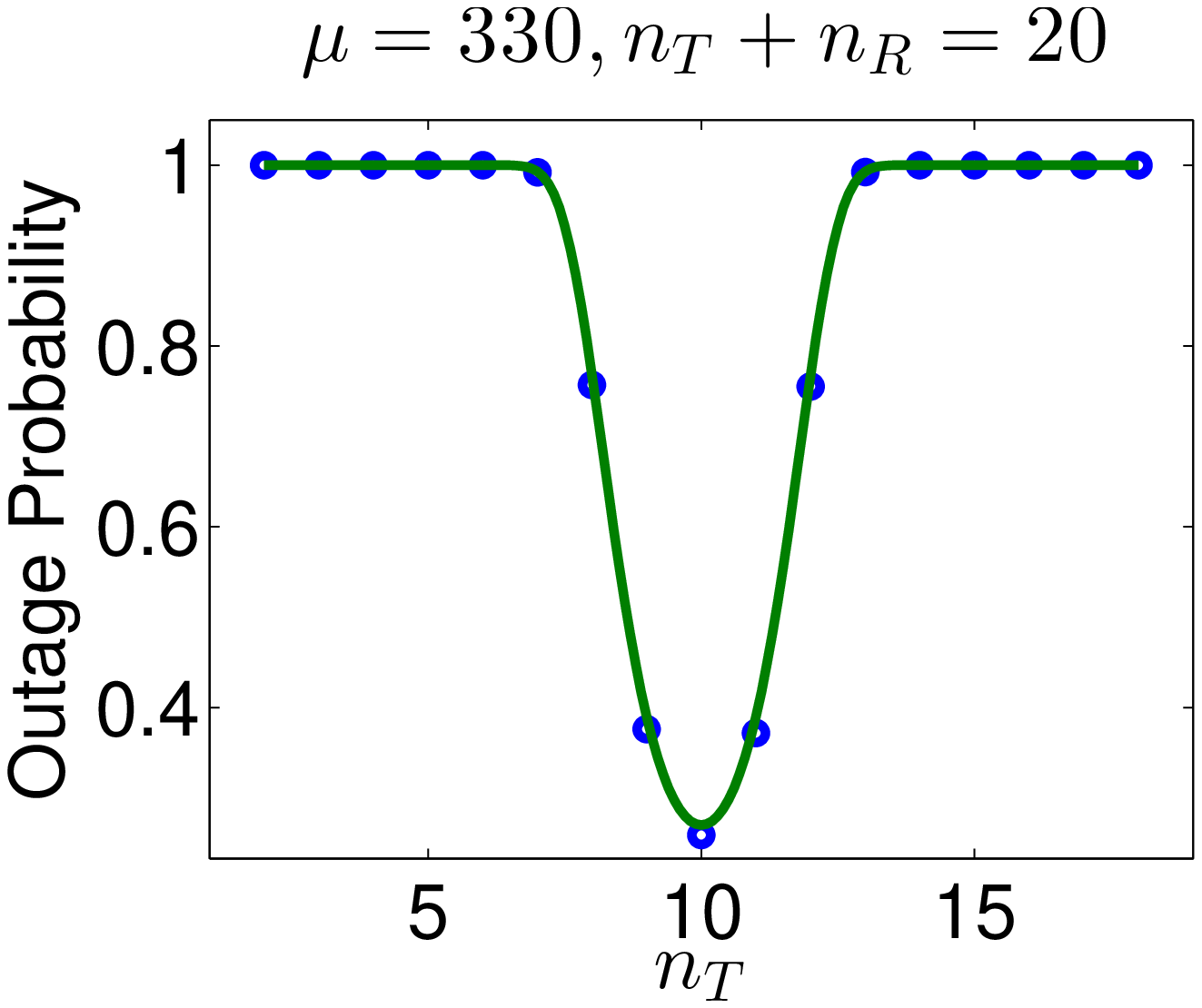}
  \caption[Const Number of Antennas]
   {\small
   The outage probability as a function of $n_T$, where $n_T+n_R$ is fixed. The circles represent a Monte-Carlo simulation whereas the solid line is our approximation (which can be computed for any non-integer $n_T \in \mathbb{R}^+$). These graphs not only support Claim \ref{claim:mimo}, but also demonstrate the accuracy of our approximations. In both graphs,  $K=2, \sigma_H=0.3, \sigma_n=1$ and $\Omega_D = 5$.
   \normalsize}
\end{figure}

\section{Simulations}
\label{sec:simulations}

Here we assess the accuracy of our proposed approximation by a series of simulations. We calculate the empirical distribution of the largest eigenvalue using Mont Carlo realizations and compare it to our Propositions. Results for cases 1 and 2 are shown in Figure \ref{fig:prop1_2}, where for comparison, we also plot the standard  Gaussian density. Results for cases 3 and 4 are shown in Figure \ref{fig:prop3_4}. As we can see, in all cases, for small sample size and dimension, the distribution of the largest root deviates significantly from the asymptotic Gaussian one, with our propositions being able to capture this key factor. 

Figure \ref{fig:prop5} shows the accuracy of Proposition \ref{canonical}. A good match between the theoretical approximate result and simulation results is clearly visible, particularly, at the tail of the distribution.  The proposed simple stochastic characterization of the inner product between the leading sample and population eigenvectors is corroborated by the simulation results given in Figure \ref{fig:prop6_7}.

\begin{figure}[t]
\centering
\includegraphics[width=2.85in]{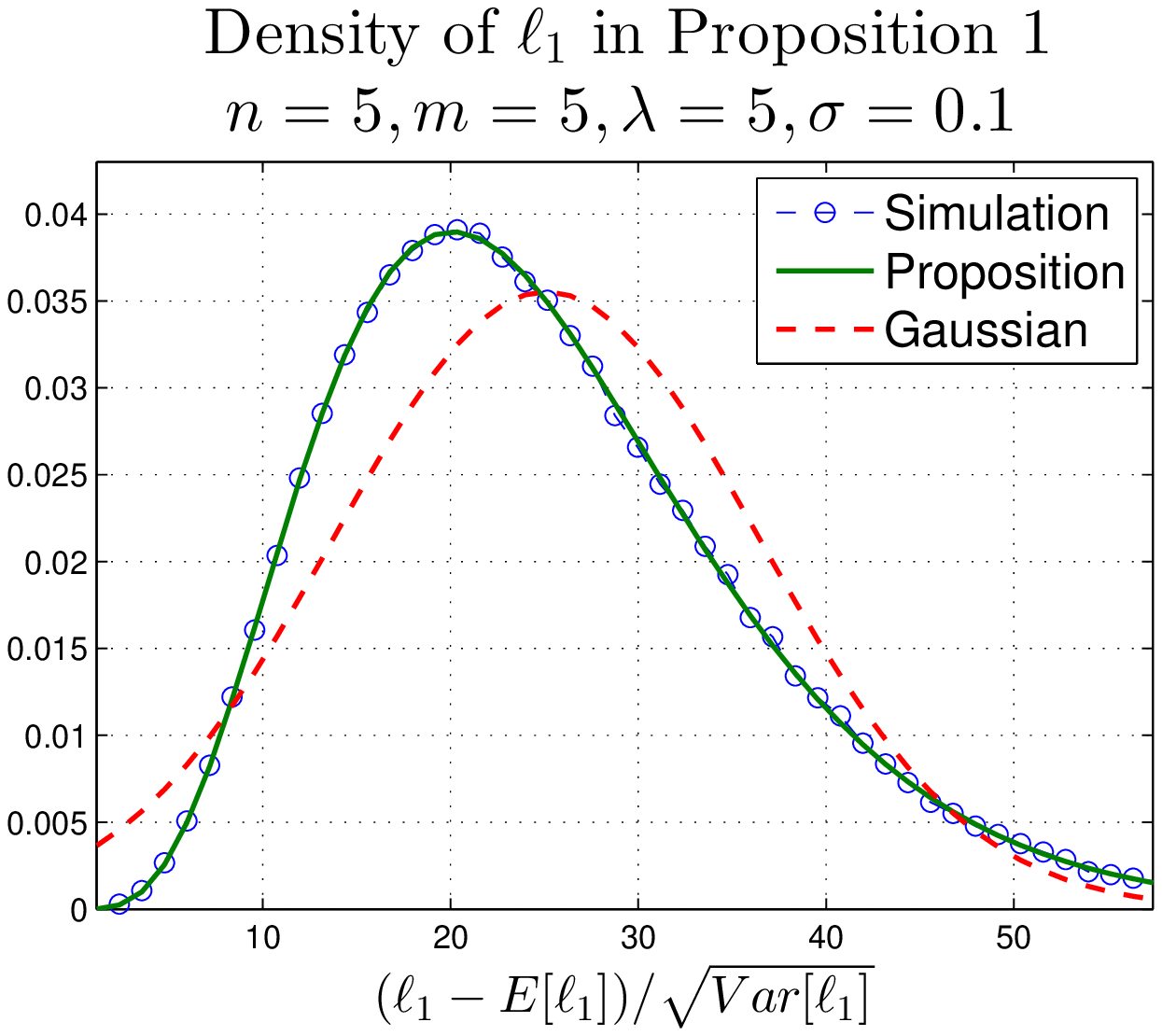}
\includegraphics[width=2.85in]{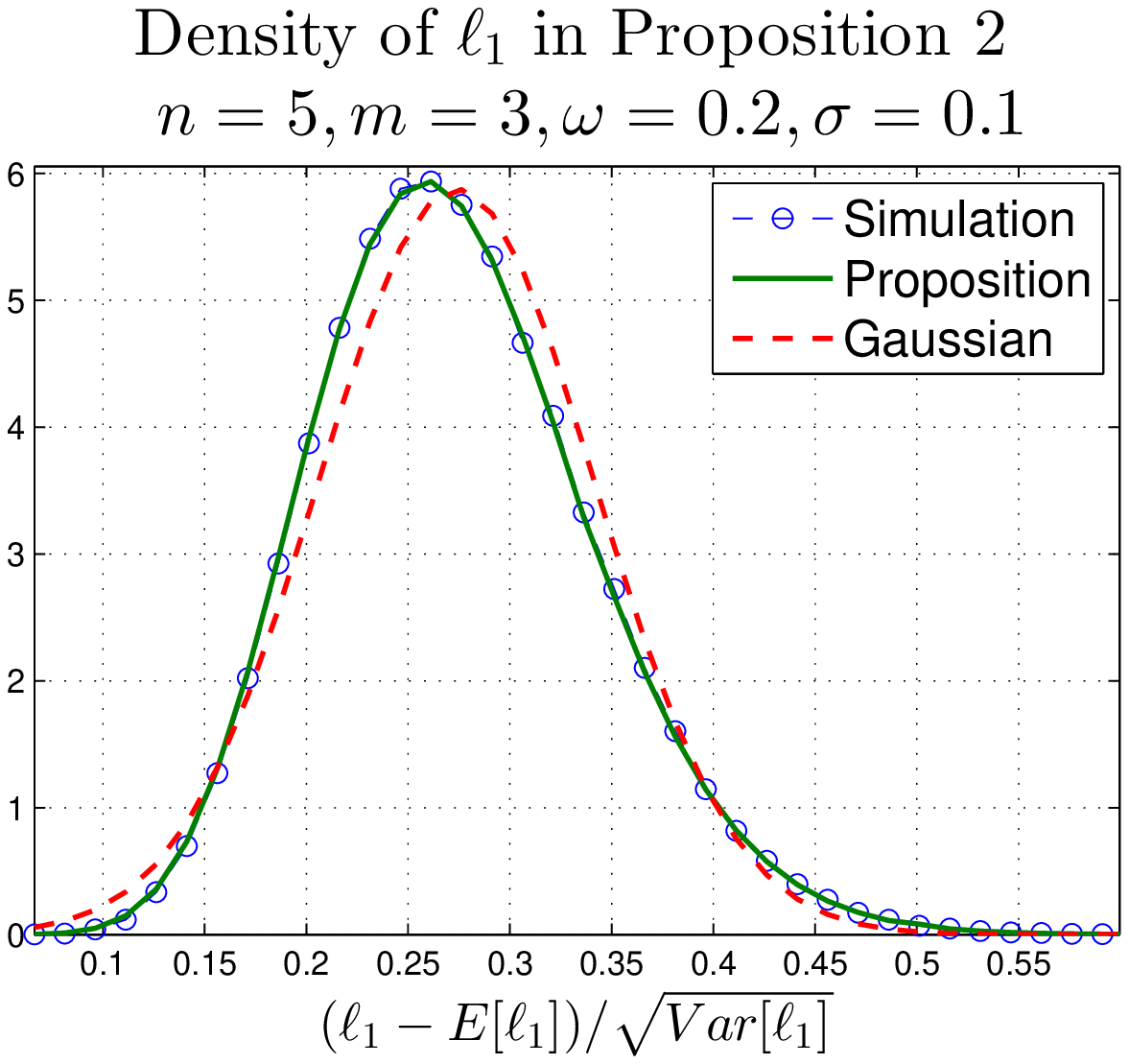}
\caption{Density functions of the largest eigenvalue in cases 1 and 2. }
\label{fig:prop1_2}
\end{figure}

\begin{figure}[t]
\centering
\includegraphics[width=2.80in]{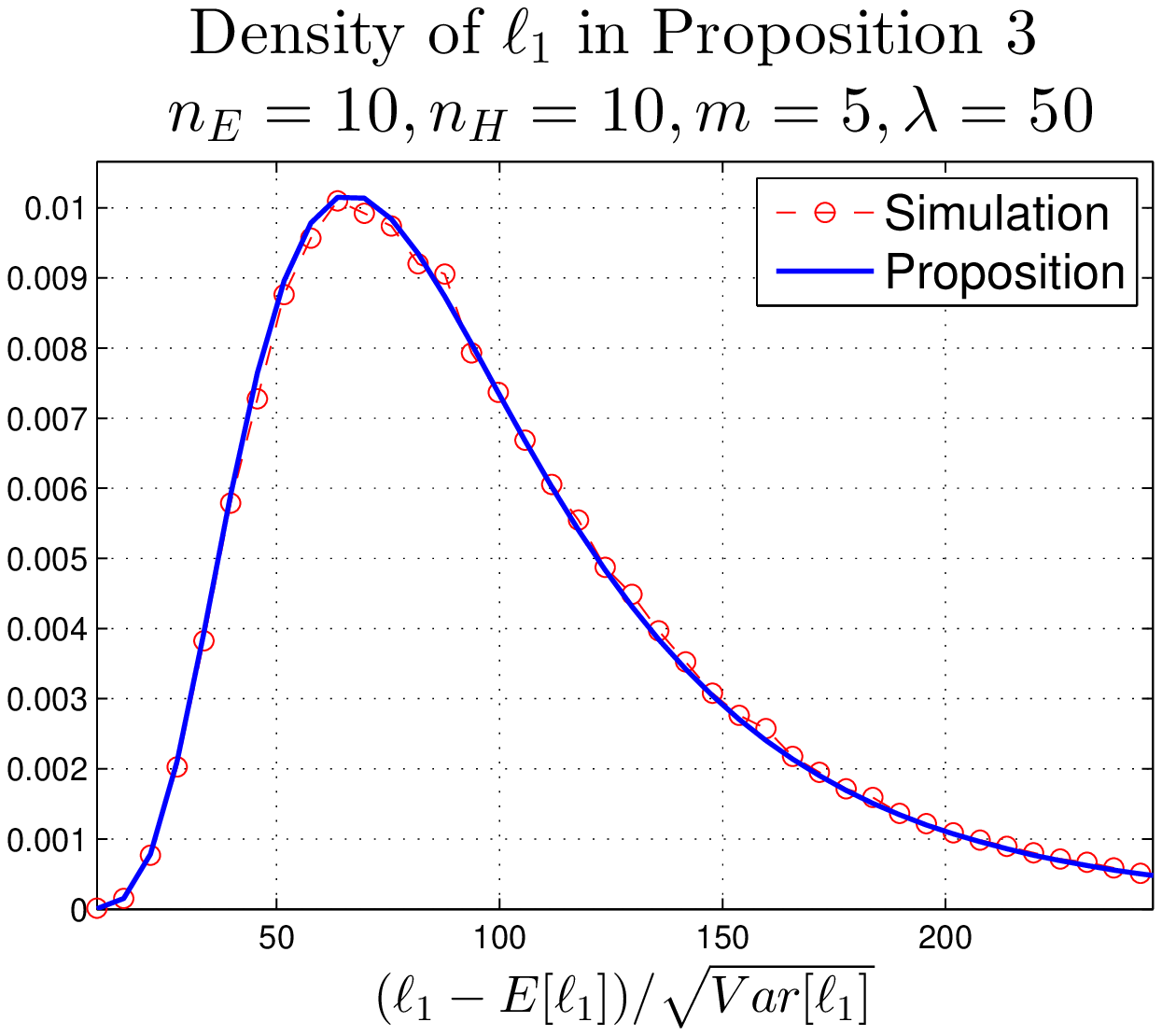}
\includegraphics[width=2.80in]{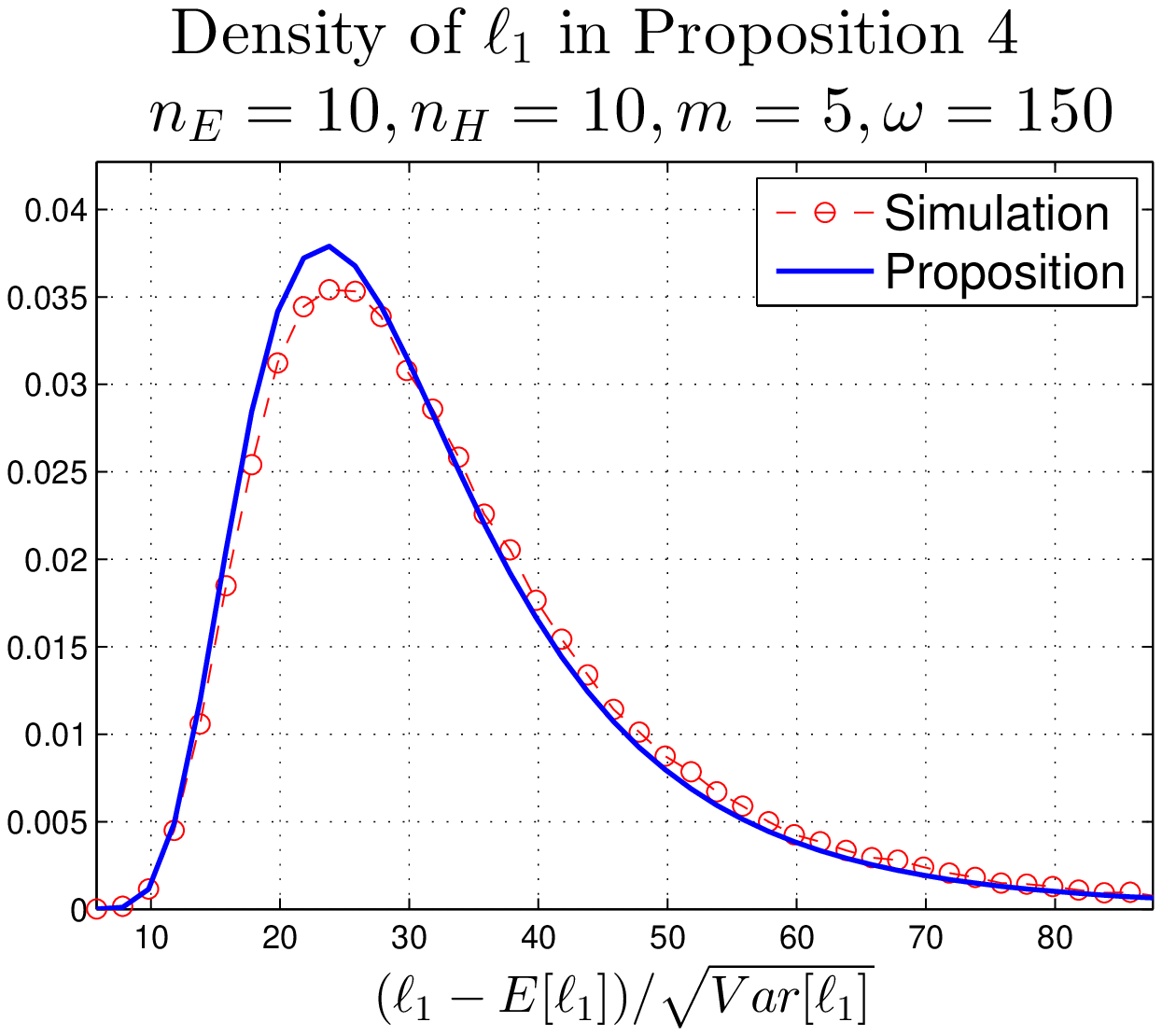}
\caption{Density functions of the largest eigenvalues in scenarios 3 and 4. }
\label{fig:prop3_4}
\end{figure}

\begin{figure}[h]
\centering
\includegraphics[width=3.5in]{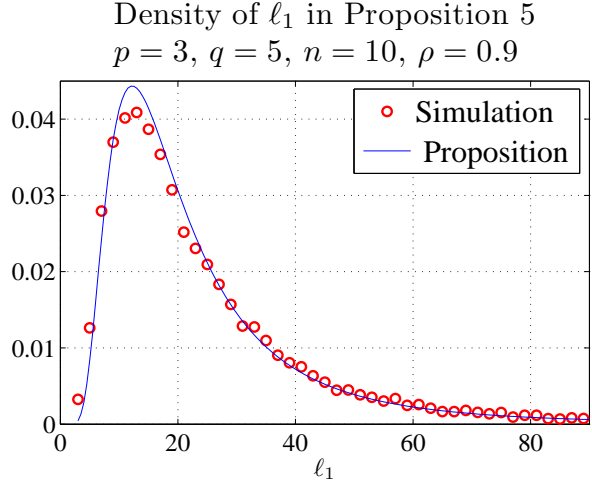}
\caption{Density function of $\ell_1(E^{-1}H)$ }
\label{fig:prop5}
\end{figure}

\begin{figure}[h]
\centering
\includegraphics[width=2.85in]{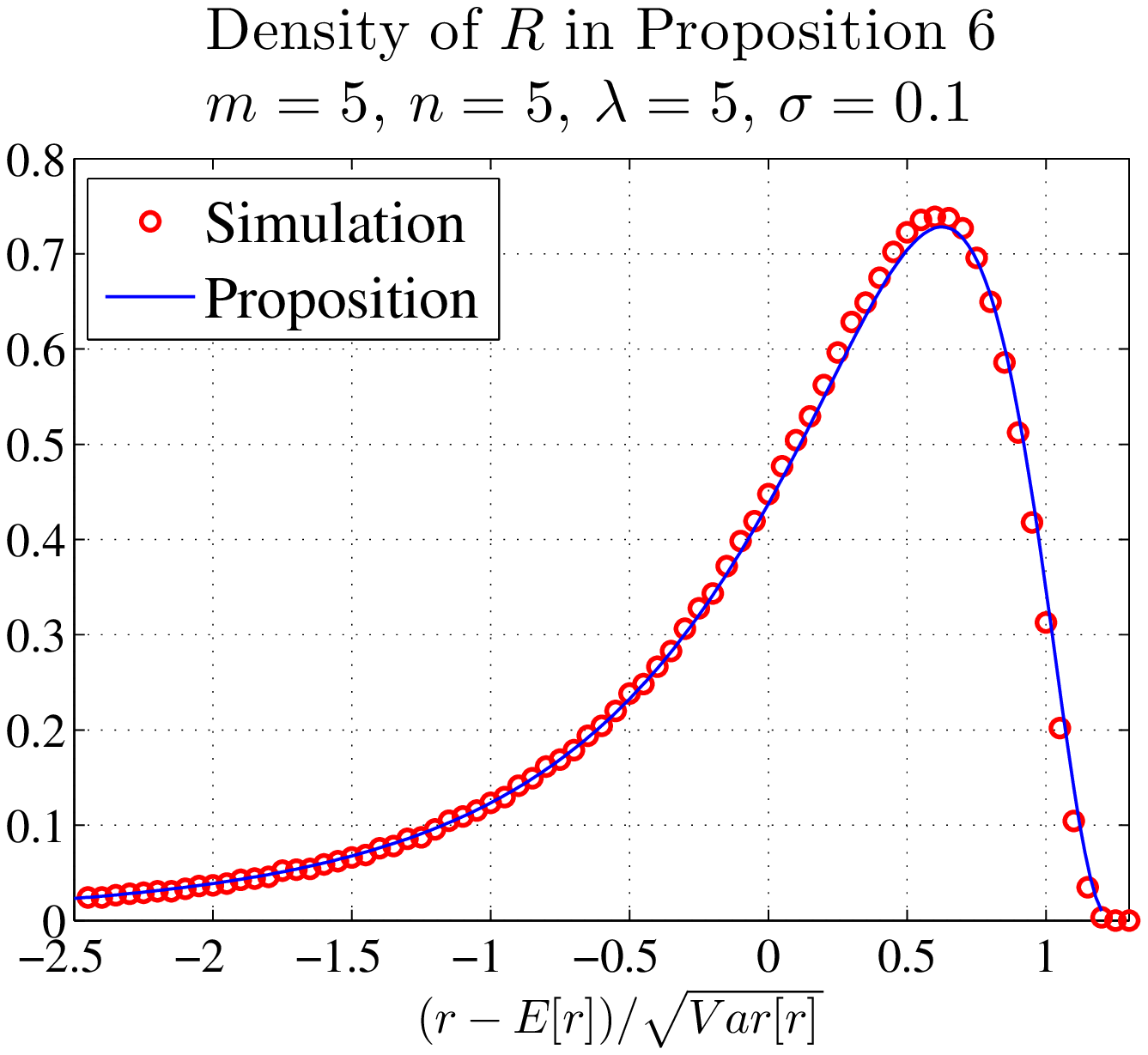}
\includegraphics[width=2.84in]{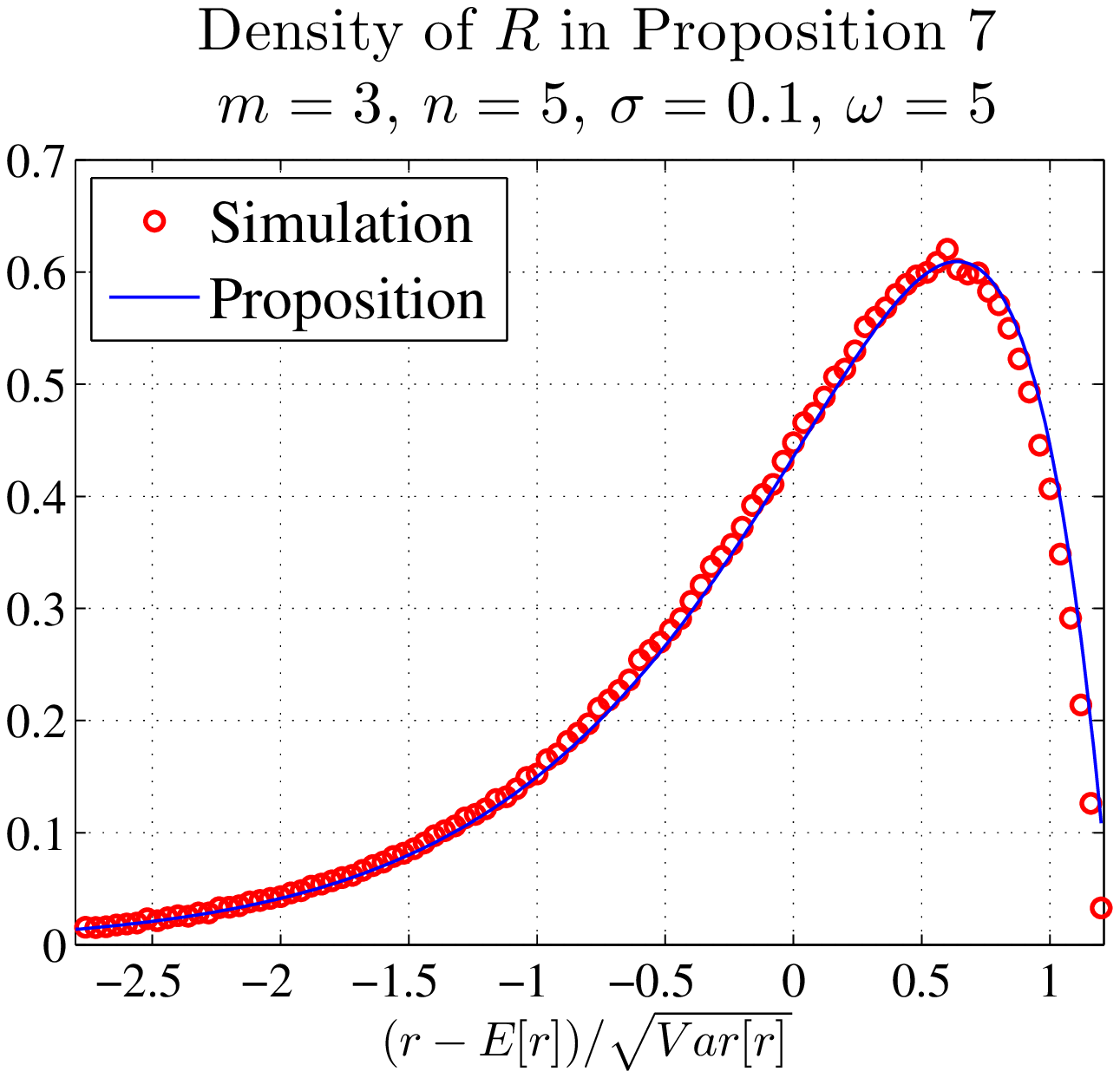}
\caption{Density functions of $R$ corresponding to scenarios 1 and 2. }
\label{fig:prop6_7}
\end{figure}

\section*{Acknowledgments}
This work was supported in part by NIH grant NIH BIB R01EB1988 (PD) and BSF 2012-159 (PD, BN, OS).

\appendix

\section{Proofs}

To prove Prop. 1 and 2, we first present an auxiliary lemma, whose proof is provided later on in appendix \ref{sec:proofs_lemmas}.

\begin{lemma}\label{lemma:1}
Let $\{{\bf x}_j\}_{j=1}^n$ be \(n\) vectors in $\mathbb{C}^m$ of the form
\begin{equation}
\label{eq:L_1_def1}
{\bf x}_j = u_j {\bf e}_1 + \epsilon \xi_j ^ \bot
\end{equation}
with vectors $\xi_j ^ \bot= \begin{pmatrix} 0 \\ \xi_j \end{pmatrix} $, $\xi_j \in \mathbb{C}^{m-1}$.  Define a scalar $z \in \mathbb{R}$, a vector $b \in \mathbb{C}^{m-1}$ and a matrix $Z\in \mathbb{C}^{(m-1)\times(m-1)}$ as follows:
\begin{equation}
\label{eq:L_1_def2}
z = \sum_{j=1}^{n} u_j \overline{u_j}, \quad b = z^{-\frac{1}{2}} \sum_{j=1}^{n} \overline{u_j} \xi_j, \quad Z = \sum_{j=1}^{n} \xi_j \xi_j^\dagger.
\end{equation}
Finally, let $\ell_1(\epsilon)$ be the largest eigenvalue of $\ds H(\epsilon)= \sum_{j=1}^n {\bf x}_j {{\bf x}_j}^\dagger$. Then $\ell_1(\epsilon)$ is an even analytic function of $\epsilon$ and its Taylor expansion around $\epsilon=0$ is
\begin{equation}
\label{eq:L_1_l_taylor}
\ell_1(\epsilon) = z + b^\dagger b \epsilon^2 + z^{-1} b^\dagger (Z-b b^\dagger)b \epsilon ^4 + \dots
\end{equation}
\end{lemma}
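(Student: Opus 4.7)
The plan is to exploit the fact that in the stated basis, the matrix $H(\epsilon)$ has an especially simple block form, and then apply Schur-complement algebra to the characteristic equation to read off the expansion coefficients. Concretely, expanding $x_j x_j^\dagger$ and summing, using the definitions of $z$, $b$, and $Z$, one obtains
\begin{equation*}
H(\epsilon) \;=\; \begin{pmatrix} z & \epsilon\,z^{1/2}\,b^\dagger \\[2pt] \epsilon\,z^{1/2}\,b & \epsilon^{2} Z \end{pmatrix},
\end{equation*}
so that $H(0)=z\,e_{1}e_{1}^{\dagger}$ has $z$ as a simple eigenvalue with gap $z>0$ from the rest of the spectrum.

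I would then establish evenness and analyticity before computing coefficients. For evenness, note that if $P=\mathrm{diag}(1,-I_{m-1})$, then $P H(\epsilon) P^{\dagger}=H(-\epsilon)$; since $P$ is unitary the spectra coincide, so $\ell_{1}(\epsilon)=\ell_{1}(-\epsilon)$ and only even powers of $\epsilon$ will appear. For analyticity, $H(\epsilon)$ is a polynomial (hence entire) matrix-valued function of $\epsilon$, and the eigenvalue $z$ of $H(0)$ is simple, so Kato's analytic perturbation theory gives a unique analytic branch $\ell_{1}(\epsilon)$ defined in a neighborhood of $0$.

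For the coefficients, the main calculation is a Schur-complement reduction of the eigenvalue equation. Writing $\det(H(\epsilon)-\lambda I)=0$ and taking the Schur complement against the lower-right block $\epsilon^{2}Z-\lambda I_{m-1}$ (which is invertible for $\lambda$ close to $z$ and $\epsilon$ small), the factor carrying the relevant root becomes
\begin{equation*}
\lambda \;=\; z \,-\, \epsilon^{2} z\, b^{\dagger}\bigl(\epsilon^{2} Z-\lambda I_{m-1}\bigr)^{-1} b.
\end{equation*}
Expanding $(\epsilon^{2}Z-\lambda I)^{-1}=-\lambda^{-1}\sum_{k\ge0}(\epsilon^{2}Z/\lambda)^{k}$ as a Neumann series and iteratively self-substituting $\lambda=z+\epsilon^{2}a+\epsilon^{4}c+\dots$ with $a=b^{\dagger}b$ gives the $\epsilon^{2}$ coefficient immediately, and one more iteration produces the $\epsilon^{4}$ coefficient $z^{-1}[b^{\dagger}Zb-(b^{\dagger}b)^{2}] = z^{-1}b^{\dagger}(Z-bb^{\dagger})b$, matching (\ref{eq:L_1_l_taylor}).

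I expect no serious obstacle: the only subtleties are justifying that the correct root of the characteristic polynomial is the one captured by the Schur-complement factor (which follows from the spectral gap between $z$ and $0$, and from the non-singularity of the lower-right block for small $\epsilon$), and organizing the Neumann-series substitution so that each coefficient is collected at the right order. The evenness already derived is a useful consistency check, since the $\epsilon^{2}$ and $\epsilon^{4}$ terms obtained are manifestly real and no $\epsilon^{1},\epsilon^{3}$ terms appear.
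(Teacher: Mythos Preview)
Your proposal is correct. The block form of $H(\epsilon)$, the evenness argument via conjugation by $\mathrm{diag}(1,-I_{m-1})$, and the appeal to Kato for analyticity of the simple eigenvalue branch are exactly what the paper does. Where you diverge is in extracting the coefficients: the paper expands both the eigenvalue $\ell_1=\sum_j\lambda_j\epsilon^j$ and the eigenvector $v_1=\sum_j w_j\epsilon^j$, substitutes into $Hv_1=\ell_1 v_1$, and reads off $\lambda_0,\lambda_2,\lambda_4$ (and simultaneously $w_0,w_1,w_3$) from the hierarchy of equations at each order; you instead take the Schur complement of $H(\epsilon)-\lambda I$ with respect to the $(m-1)\times(m-1)$ block, obtain the scalar fixed-point equation $\lambda=z-\epsilon^2 z\,b^\dagger(\epsilon^2 Z-\lambda I)^{-1}b$, and iterate a Neumann expansion. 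Your route is slightly more economical for the eigenvalue alone, since it avoids tracking the eigenvector perturbations. The paper's route, however, produces the eigenvector coefficients $w_0=e_1$, $w_1=z^{-1/2}\hat b$, $w_3=\lambda_0^{-1}(A_2-\lambda_2 I)w_1$ as a byproduct, and these are later reused verbatim in the proofs of Propositions~\ref{prop:innerv} and~\ref{prop:innervnon} on the sample--population eigenvector overlap; with your approach those would need a separate computation.
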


\begin{proof}[Proof of Prop. 1 and 2]
First, note that the eigenvalues of $H$ are invariant under unitary transformations. Hence, w.l.g. we can assume that ${\bf v}={\bf e}_1$. Thus the matrix $H$ may be realized from $n$ i.i.d. observations of the form (\ref{eq:L_1_def1}) with $\epsilon$ replaced by $\sigma$,
\begin{equation}
\label{eq:Prop_1_2_P_dist1}
\xi_j \sim CN(0, I_{m-1}), \qquad
u_j \sim \begin{cases}
    CN(0, \sigma^2 + \lambda) & \text{Prop. 1} \\
    CN(\mu_j, \sigma^2) & \text{Prop. 2}
  \end{cases}
\end{equation}
and $\mu_j$ are arbitrary complex numbers satisfying $\sum |\mu_j|^2 = \omega$.

Lemma $1$ yields the series approximation (\ref{eq:L_1_l_taylor})  for each realization of $u=(u_k)$ and $\Xi=[\xi_1, \dots, \xi_n] \in \mathbb{C}^{(m-1) \times n}$. To see the implications of the distributional assumptions 
(\ref{eq:Prop_1_2_P_dist1}), we first rewrite (\ref{eq:L_1_l_taylor}) as follows.
Define $o_1 = \overline{u} / ||u|| \in \mathbb{C}^n$ and then choose columns $o_2,\dots,o_n$ so that $O=[o_1,\dots,o_n]$ is an $n \times n$ unitary matrix. Let $V = \Xi O$ denote the $(m-1) \times n$ matrix,  whose first column satisfies $v_1=\Xi \overline{u} / ||u|| = b$.
In this notation, the $O(\epsilon^2)$ term in Eq. (\ref{eq:L_1_l_taylor}) can be written as $b^\dagger b = ||v_1|| ^ 2$. For the forth order term, observe that $Z= \Xi \Xi^\dagger = V V^\dagger$ and so
\begin{eqnarray} 
D = b^\dagger(Z- b b^\dagger)b &=& v_1^\dagger(V V^\dagger - v_1 v_1^\dagger)v_1 = (v_1^\dagger V) (v_1^\dagger V)^\dagger - (v_1^\dagger v_1) (v_1^\dagger v_1)^\dagger \nonumber\\
 &=& \sum_{j=2}^n |v_1^\dagger v_j|^2. \nonumber 
\end{eqnarray}
Hence, Eq. (\ref{eq:L_1_l_taylor})  becomes
$$\ell_1(\epsilon) = V_{0} + V_2 \epsilon^2 + V_4 \epsilon^4 + \dots$$
where
\begin{align*}
V_0 = ||u||^2, \qquad V_2 = ||v_1||^2, \qquad V_4 = V_0^{-1}D.
\end{align*}
Now, we bring in the distributional assumptions (\ref{eq:Prop_1_2_P_dist1}) in order to study the distributions of $V_0, V_2, V_4$.

Observe that $u_j = \frac{1}{\sqrt{2}}a_j + \frac{i}{\sqrt{2}} b_j$, where
\begin{equation*}
a_j \sim \begin{cases}
    N(0, \lambda+\sigma^2) & \text{Prop. 1} \\
    N(\sqrt{2}Real(\mu_j), \sigma^2) & \text{Prop. 2}
  \end{cases}
  \qquad
b_j \sim \begin{cases}
  N(0, \lambda+\sigma^2) & \text{Prop. 1} \\
  N(\sqrt{2}Im(\mu_j), \sigma^2) & \text{Prop. 2}
\end{cases}
\end{equation*}
so $\ds ||u||^2= \frac{1}{2}\sum_{j=1}^n ( a_j ^2 + b_j ^2)$ is a sum of $2n$ independent squares of Gaussian random variables, and therefore
\begin{equation*}
V_0 = ||u||^2 \sim \begin{cases}
    \frac{\sigma^2 + \lambda}{2} \chi_{2n}^2 & \text{Prop. 1} \\
    \frac{\sigma^2}{2} \chi_{2n}^2(\frac{2 \omega}{\sigma^2}) & \text{Prop. 2}.
  \end{cases}
\end{equation*}
Since $O$ is unitary and fixed once $u$ is given, the columns $v_j|u \sim \mathcal C\mathcal N(0, I_{m-1})$. The distribution of $v_j$ does not depend of $u$, hence $v_j \sim \mathcal C\mathcal N(0, I_{m-1}) $. Applying the same arguments as before
$$V_2 = ||v_1||^2 \sim \frac{1}{2}\chi_{2m-2}^2$$
independently of $||u||^2$.\\
Finally, conditioned on $(u, v_1)$, we have $v_1^\dagger v_j \sim \mathcal C\mathcal N(0, ||v_1||^2)$ and $|v_1^\dagger v_j|^2 \sim \frac{||v_1||^2}{2} \chi_2^2 $. Again, applying the same arguments as before,
$$D|(u, v_1) = \sum_{j=2}^n |v_1^\dagger v_j|^2 |(u, v_1) \sim \frac{||v_1||^2}{2} \chi_{2n - 2}^2 $$
where the $\chi_{2n-2}^2$ variate is independent of $(u, v_1)$.

We conclude that
\begin{equation*}
V_4  \sim \begin{cases}
    \frac{1}{2\sigma^2 + 2\lambda} (\chi_{2n}^2)^{-1} \chi_{2m - 2}^2 \chi_{2n - 2}^2 & \text{Prop. 1} \\
    \frac{1}{2\sigma^2} (\chi_{2n}^2(\frac{2 \omega}{\sigma^2}))^{-1} \chi_{2m - 2}^2 \chi_{2n - 2}^2 & \text{Prop. 2}
  \end{cases}
\end{equation*}
and this completes the proof of Propositions 1 and 2.
\end{proof}

To prove Propositions 3 and 4, we first introduce some additional notation and two auxiliary lemmas, whose proofs are provided later on in appendix \ref{sec:proofs_lemmas}.
For a matrix $S$, denote by $S_{jk}$ and $S^{jk}$ the $(j,k)$-th entry of $S$ or $S^{-1}$, respectively.
 
\begin{lemma}\label{lemma:2}
Let $E \sim \mathcal{CW}_m(n, I)$ and $M = [{\bf e}_1,  b] \in \mathbb{C}^{m\times2} $, with $ b \bot {\bf e}_1$ fixed. Then
\begin{equation}
\label{eq:L_2_dist1}
S = (M^\dagger E^{-1} M)^{-1} \sim CW_2(n-m+2, D), \qquad D = diag(1, \frac{1}{||b||^2})
\end{equation}
and the two random variables $S^{11}$ and $S_{22}$ are independent with
\begin{equation}
\label{eq:L_2_dist2}
S^{11} \sim  \frac{2}{\chi_{2n - 2m + 2}^2}, \qquad S_{22} \sim  \frac{\chi_{2n - 2m + 4}^2}{2||b||^2}.
\end{equation}
\end{lemma}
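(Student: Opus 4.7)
The plan is to prove (\ref{eq:L_2_dist1}) and (\ref{eq:L_2_dist2}) via a two-stage Schur-complement argument. First, since $\mathcal{CW}_m(n,I)$ is invariant under unitary conjugation $E\mapsto VEV^\dagger$, and $(M^\dagger(VEV^\dagger)^{-1}M)^{-1}=((V^\dagger M)^\dagger E^{-1}(V^\dagger M))^{-1}$, I would choose a unitary $V$ with $V{\bf e}_1={\bf e}_1$ and $V{\bf e}_2=b/\|b\|$ (available because $b\bot{\bf e}_1$), so that $V^\dagger M=[{\bf e}_1,\|b\|{\bf e}_2]$. This reduces without loss of generality to the case $b=\|b\|{\bf e}_2$. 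Under this reduction, $M^\dagger E^{-1}M=JAJ$, where $J=\mathrm{diag}(1,\|b\|)$ and $A$ is the upper-left $2\times 2$ block of $E^{-1}$; hence $S=J^{-1}A^{-1}J^{-1}$.

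Next I would invoke the classical block-matrix identity identifying the top-left block of an inverse with the inverse of the Schur complement of the complementary block: $A^{-1}=E_{11\cdot 2}:=E_{11}-E_{12}E_{22}^{-1}E_{21}$, where $E$ is partitioned into blocks of sizes $2$ and $m-2$. For the complex Wishart $E\sim\mathcal{CW}_m(n,I)$ the Schur complement has the well-known marginal distribution $E_{11\cdot 2}\sim\mathcal{CW}_2(n-m+2,I_2)$. Since deterministic two-sided congruence preserves the complex Wishart law and transforms its covariance matrix, $J^{-1}\,\mathcal{CW}_2(\nu,I_2)\,J^{-1}=\mathcal{CW}_2(\nu,J^{-2})$, we obtain $S\sim\mathcal{CW}_2(n-m+2,D)$, establishing (\ref{eq:L_2_dist1}).

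For (\ref{eq:L_2_dist2}), I would apply the same block-Wishart decomposition to $S$ with a $1+1$ partition. Because $D$ is diagonal, the standard marginal/conditional structure gives $S_{22}\sim\mathcal{CW}_1(n-m+2,1/\|b\|^2)=\chi^2_{2n-2m+4}/(2\|b\|^2)$ and, independently, the Schur complement $S_{11\cdot 2}:=S_{11}-|S_{12}|^2/S_{22}\sim\mathcal{CW}_1(n-m+1,1)=\chi^2_{2n-2m+2}/2$. Since for a $2\times 2$ matrix $S^{11}=S_{22}/\det S=1/S_{11\cdot 2}$, this yields $S^{11}\sim 2/\chi^2_{2n-2m+2}$ independent of $S_{22}$, as claimed.

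The main obstacle is essentially bookkeeping: carefully assembling the complex-valued analogs of the classical Wishart identities (the Schur-complement marginal/conditional decomposition, the drop from $n$ to $n-m+2$ degrees of freedom when passing to a $2\times 2$ submatrix of the inverse of an $m\times m$ complex Wishart, and the transformation of the covariance under two-sided congruence) so that all chi-squared degrees of freedom come out consistent with the statement, and verifying that the diagonal structure of $D$ is what makes $S^{11}$ and $S_{22}$ not merely uncorrelated but genuinely independent.
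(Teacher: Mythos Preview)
Your proposal is correct and follows essentially the same route as the paper: both arguments rest on the complex Wishart Schur-complement law (the paper's Claim~1) to obtain $S\sim\mathcal{CW}_2(n-m+2,D)$, and both identify $S^{11}=1/S_{11\cdot 2}$ to read off the $\chi^2$ distributions and the independence of $S^{11}$ and $S_{22}$. The only cosmetic difference is that the paper packages the first step as a general fact $(M^\dagger E^{-1}M)^{-1}\sim\mathcal{CW}_k(n-m+k,(M^\dagger M)^{-1})$ (its Claim~2, proved via an SVD reduction that plays the same role as your unitary conjugation), while you specialize directly to $b=\|b\|\,{\bf e}_2$ and invoke the block-inverse identity.
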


\begin{lemma}\label{lemma:3}
Let $E \sim \mathcal{CW}_m(n, I)$ and let
$A_2=\begin{pmatrix} 0 & 0 \\ 0 & Z\end{pmatrix}$
where $Z$ is a $(m-1)\times(m-1)$ random matrix independent of $E$, with $\mathbb{E}[Z] = I_{m-1}$. Then
\begin{equation}
\label{eq:L_3_expt}
\mathbb{E}\left[ \frac{ {\bf e}_1^T E^{-1} A_2 E^{-1} {\bf e}_1 }{E^{11}}  \right] = \frac{m-1}{(n-m)(n-m+1)}.
\end{equation}
\end{lemma}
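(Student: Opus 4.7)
The plan is to first use the independence of $Z$ from $E$ together with $\mathbb{E}[Z]=I_{m-1}$ to eliminate $Z$, reducing the problem to a specific moment of the first column of $E^{-1}$, and then to compute that moment by realizing $E$ as a Gram matrix of complex Gaussians and exploiting an orthogonal decomposition.

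Set $W=E^{-1}$, partition its first column as $(W_{11},w_{*}^{T})^{T}$ with $w_{*}\in\mathbb{C}^{m-1}$, and observe that since $W$ is Hermitian, ${\bf e}_{1}^{T}WA_{2}W{\bf e}_{1}=w_{*}^{\dagger}Zw_{*}$ and $E^{11}=W_{11}$. Conditioning on $E$ and using $\mathbb{E}[Z]=I_{m-1}$,
\[
\mathbb{E}\!\left[\frac{{\bf e}_{1}^{T}E^{-1}A_{2}E^{-1}{\bf e}_{1}}{E^{11}}\right]=\mathbb{E}\!\left[\frac{\|w_{*}\|^{2}}{W_{11}}\right].
\]
This turns the statement into a pure moment of $E^{-1}$.

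To handle the right-hand side, I would partition $E$ into blocks with $E_{11}$ scalar, $e_{12}\in\mathbb{C}^{m-1}$, and $E_{22}\in\mathbb{C}^{(m-1)\times(m-1)}$. The Schur-complement formula gives $W_{11}=(E_{11}-e_{12}^{\dagger}E_{22}^{-1}e_{12})^{-1}$ and $w_{*}=-W_{11}E_{22}^{-1}e_{12}$, hence $\|w_{*}\|^{2}/W_{11}=W_{11}\cdot e_{12}^{\dagger}E_{22}^{-2}e_{12}$. Next, realize $E=X^{\dagger}X$ with $X\in\mathbb{C}^{n\times m}$ having i.i.d.\ $\mathcal{CN}(0,1)$ entries, split $X=[x_{1}\mid X_{2}]$, and let $P=X_{2}(X_{2}^{\dagger}X_{2})^{-1}X_{2}^{\dagger}$ be the orthogonal projector onto the column space of $X_{2}$. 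A short calculation yields $W_{11}^{-1}=x_{1}^{\dagger}(I-P)x_{1}$ and $e_{12}^{\dagger}E_{22}^{-2}e_{12}=x_{1}^{\dagger}Qx_{1}$ with $Q=X_{2}(X_{2}^{\dagger}X_{2})^{-2}X_{2}^{\dagger}$. The key observation is that $Q$ is supported on $\mathrm{range}(P)$ while $W_{11}$ depends only on $(I-P)x_{1}$; since $Px_{1}$ and $(I-P)x_{1}$ are conditionally independent given $X_{2}$, the expectation factors as
\[
\mathbb{E}\!\left[\frac{\|w_{*}\|^{2}}{W_{11}}\,\Big|\,X_{2}\right]=\mathbb{E}[W_{11}\mid X_{2}]\cdot\mathbb{E}[x_{1}^{\dagger}Qx_{1}\mid X_{2}].
\]

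Each factor is now elementary. The first is $\mathbb{E}[2/\chi^{2}_{2(n-m+1)}]=1/(n-m)$, using that $x_{1}^{\dagger}(I-P)x_{1}\sim\tfrac{1}{2}\chi^{2}_{2(n-m+1)}$ conditional on $X_{2}$ (since $I-P$ is a rank-$(n-m+1)$ projector independent of $x_{1}$). The second equals $\mathrm{tr}(Q)=\mathrm{tr}(E_{22}^{-1})$, and taking the outer expectation with $E_{22}\sim\mathcal{CW}_{m-1}(n,I_{m-1})$ gives $\mathbb{E}[\mathrm{tr}(E_{22}^{-1})]=(m-1)/(n-m+1)$ via the standard complex-Wishart inverse-moment identity $\mathbb{E}[E_{22}^{-1}]=I_{m-1}/(n-m+1)$. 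Multiplying the two factors yields $(m-1)/[(n-m)(n-m+1)]$, as claimed. The only non-routine step is recognizing the $P$ versus $I-P$ split of $x_{1}$, which converts a joint moment into a product of two one-variable ones; without this decomposition one would be forced to grapple with the joint law of $(W_{11},w_{*})$.
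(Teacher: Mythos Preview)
Your proof is correct but takes a genuinely different route from the paper's. Both arguments begin identically, conditioning on $E$ to replace $Z$ by $I_{m-1}$ and reducing to $\mathbb{E}\bigl[\sum_{j\ge 2}|E^{1j}|^{2}/E^{11}\bigr]$. The paper then uses symmetry to reduce to the single term $\mathbb{E}[|E^{12}|^{2}/E^{11}]$ and evaluates it via the algebraic identity $1/S_{22}=E^{22}-|E^{12}|^{2}/E^{11}$ for the $2\times 2$ matrix $S=(M^{\dagger}E^{-1}M)^{-1}$, $M=[{\bf e}_{1}\;{\bf e}_{2}]$, invoking Claim~2 (the complex analogue of Muirhead's theorem) to obtain the laws of $S_{22}$ and $E^{22}$ and then subtracting their reciprocal means. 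Your approach instead realizes $E=X^{\dagger}X$ and uses the orthogonal decomposition $x_{1}=Px_{1}+(I-P)x_{1}$, which makes $W_{11}$ and $e_{12}^{\dagger}E_{22}^{-2}e_{12}$ conditionally independent given $X_{2}$, so the expectation factors into $\mathbb{E}[2/\chi^{2}_{2(n-m+1)}]\cdot\mathbb{E}[\mathrm{tr}(E_{22}^{-1})]$. The paper's route is shorter once Lemma~2/Claim~2 are in hand and reuses machinery already built for Propositions~3--4; your route is self-contained, needing only the elementary complex-Wishart inverse moment $\mathbb{E}[E_{22}^{-1}]=I_{m-1}/(n-m+1)$, and the projection trick it rests on is a useful general device for factoring Wishart moments.
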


\begin{proof}[Proof of Prop. 3 and 4]

Without loss of generality we may assume that the signal direction is ${\bf v}={\bf e}_1$. Hence
$$
H \sim \begin{cases}
    \mathcal{CW}_m(n_H, I_m + \lambda {\bf e}_1 {\bf e}_1^T) & \text{Prop. 3} \\
    \mathcal{CW}_m(n_H, I_m, \omega {\bf e}_1 {\bf e}_1^T) & \text{Prop. 4}.
  \end{cases}
$$
Next, we apply a perturbation approach similar to the one used in the previous proof. To introduce a small parameter, set
$$
\epsilon ^ 2 = \begin{cases}
    1/(1+\lambda) & \text{Prop. 3} \\
    1/\omega & \text{Prop. 4}.
  \end{cases}
$$
The matrix $H_\epsilon = \epsilon^2H$ has a representation of the form $X^\dagger X$ with $X=[{\bf x}_1, \dots, {\bf x}_{n_H}]$ where ${\bf x}_j$ are of the form (\ref{eq:L_1_def1}) but now with
\begin{equation}
\label{eq:Prop_3_4_P_dist1}
\xi_j \sim \mathcal C\mathcal N(0, I_{m-1}), \qquad
u_i \sim \begin{cases}
    \mathcal C\mathcal N(0, 1) & \text{Prop. 3} \\
    \mathcal C\mathcal N(\mu_j/\sqrt{\omega} , 1/\omega) & \text{Prop. 4}
  \end{cases}
\end{equation}
where $\sum |\mu_j|^2 = \omega$. In particular,
\begin{equation}
\label{eq:Prop_1_2_P_dist2}
z = \sum_{j=1}^{n_H} |u_j|^2
 \sim \begin{cases}
      \frac{1}{2} \chi_{2n_H}^2 & \text{Prop. 3} \\
      \frac{1}{2\omega}\chi_{2n_H}^2(2\omega) & \text{Prop. 4}.
  \end{cases}
\end{equation}

With $b$ as in (\ref{eq:L_1_def2}), using the same arguments as in the previous proof, we have that $b \sim \mathcal{CN}(0, I_{m-1})$, independently of $u$.

The matrix $H_\epsilon$ has a decomposition in the form $H_\epsilon=A_0 + \epsilon A_1 + \epsilon^2 A_2$, where
$$
A_0 = \begin{pmatrix}z & 0 \\ 0 & 0_{n-1} \end{pmatrix}, \qquad
   A_1 = \sqrt{z}\begin{pmatrix}0 & b^\dagger \\ b & 0_{n-1} \end{pmatrix}, \qquad
   A_2 = \begin{pmatrix}0 & 0 \\ 0 & Z \end{pmatrix}. $$
with $Z$ as in (\ref{eq:L_1_def2}).

For future use we define the following quantities
\begin{equation}
\label{eq:Prop_1_2_Es_def}
E^{11} = {\bf e}_1^T E^{-1} {\bf e}_1, \qquad
E^{b1} = {\hat b}^\dagger E^{-1} {\bf e}_1, \qquad
E^{bb} = {\hat b}^\dagger E^{-1} \hat b
\end{equation}
where $\hat b = \begin{pmatrix} 0 \\ b \end{pmatrix}$. Note that the condition $n_E> m$ ensures that $E$ is invertible with probability 1.

Since $E^{-1/2} H_{\epsilon} E^{-1/2}$ is Hermitian for all \(\epsilon\), the largest eigenvalue \(\ell_1(\epsilon)\) is real-valued. Furthermore, since $E^{-1/2} H_{\epsilon} E^{-1/2}$ is an holomorphic symmetric function of $\epsilon$, it follows from Kato (\cite{Kato}, Theorem 6.1 page 120) that the largest eigenvalue $\ell_1$ and its eigenprojection $P'(\epsilon)$ are analytic functions of $\epsilon$ in some neighbourhood of zero. The eigenvalues of $E^{-1}H_{\epsilon}$ are the same as those of the matrix $E^{-1/2} H_{\epsilon} E^{-1/2}$, therefore the largest eigenvalue of $E^{-1}H_{\epsilon }$ is also an analytic function of $\epsilon$. The projection to the corresponding eigenspace of $E^{-1}H_{\epsilon}$ is $P(\epsilon) = E^{-1/2}P'(\epsilon)$. Since the matrix $E$ does not depend on $\epsilon$, this projection is also an analytic function in some neighborhood of zero.

For $\epsilon=0$, $E^{-1}{\bf e}_1$ is an eigenvector with eigenvalue $E^{11}z$, that is,
$$E^{-1}H_0E^{-1}\textbf{e}_1 = zE^{-1}\textbf{e}_1\textbf{e}_1^TE^{-1}\textbf{e}_1=zE^{11}E^{-1}\textbf{e}_1$$.
Hence,
\begin{equation}
\label{Prop_3_4_e1_innerP}
\langle P(0)E^{-1} {\bf e}_1, {\bf e}_1\rangle =\langle E^{-1} {\bf e}_1, {\bf e}_1\rangle = E^{11}.
\end{equation}
Since $P$ is an analytic function of \(\epsilon\) and the inner product is a smooth function, then   $\langle P(\epsilon)E^{-1}{\bf e}_1, {\bf e}_1\rangle $ is an analytic non-zero function in some neighborhood of $\epsilon=0$. Thus, we may define
\begin{equation}
\label{eq:Prop_3_4_v_1_def}
v_1(\epsilon)  =E^{11} \langle P(\epsilon)E^{-1} {\bf e}_1, {\bf e}_1\rangle ^{-1}  P(\epsilon)E^{-1} {\bf e}_1
\end{equation}
Clearly $v_1(\epsilon)$ is an eigenvector corresponding to the eigenvalue  $\ell_1(\epsilon)$ and it is also analytic in some neighbourhood of zero.
We thus expand
\begin{equation}
\label{eq:Prop_3_4_P_seq}
 \ell_{1}(\epsilon) = \sum_{j=0}^{\infty} \lambda_j \epsilon^j, \qquad
v_1(\epsilon) = \sum_{j=0}^{\infty} w_j \epsilon^j.
\end{equation}
Inserting these expansions into the eigenvalue-eigenvector equations $E^{-1}H_{\epsilon}v_1 = \ell_{1} v_1$, we get the following equations:
At the $O(1)$ level,
\begin{equation}
E^{-1}A_0 w_0 = \lambda_0 w_0
\end{equation}
whose solution is
\begin{equation}
\label{eq:Prop_3_4_P_ev_eq_1}
\lambda_0 = z E^{11}, \qquad
w_0 = const \cdot E^{-1}{\bf e}_1.
\end{equation}
Using equations (\ref{Prop_3_4_e1_innerP})-(\ref{eq:Prop_3_4_v_1_def}), we conclude that $w_0 = v_1(0) = E^{-1}{\bf e}_1$, meaning the normalization constant is one.

From Eq. (\ref{eq:Prop_3_4_v_1_def}) it follows that ${\bf e}_1^T v_1(\epsilon) = E^{11} = {\bf e}_1^T w_0$. Hence ${\bf e}_1^T w_j = 0$ for all $j \geq 1$. Furthermore, since $A_0 = z {\bf e}_1 {\bf e}_1^T$, this normalization also conveniently gives us that $A_0w_j = 0$ for all $j \geq 1$.\\
The $O(\epsilon)$ equation is
\begin{equation}
\label{eq:Prop_3_4_P_ev_eq_2}
E^{-1}A_1 w_0 + E^{-1}A_0 w_1 = \lambda_1 w_0 + \lambda_0 w_1.
\end{equation}
However, $A_0w_1 = 0$. Multiplying this equation by ${\bf e}_1^T$ gives that
\begin{eqnarray}
\lambda_1 &=& \frac{{\bf e}_1^T E^{-1} w_0}{E^{11}} = \frac{\sqrt{z}}{E^{11}}({\bf e}_1^T E^{-1} \begin{pmatrix}0 & b^\dagger \\ b & 0 \end{pmatrix} E^{-1}{\bf e}_1) =\\
&=&\frac{\sqrt{z}}{E^{11}}({\bf e}_1^T E^{-1}\begin{pmatrix}0 & b^\dagger \\ 0 & 0 \end{pmatrix}E^{-1}{\bf e}_1 + {\bf e}_1^T E^{-1}\begin{pmatrix}0 & 0 \\ b & 0 \end{pmatrix}E^{-1}{\bf e}_1)= \nonumber
\\
&=&\frac{\sqrt{z}}{E^{11}}({\bf e}_1^T E^{-1}\begin{pmatrix}0 & b^\dagger \\ 0 & 0 \end{pmatrix}E^{-1}{\bf e}_1 + {\bf e}_1^T E^{-1}\begin{pmatrix}0 & b \\ 0 & 0 \end{pmatrix}E^{-1}{\bf e}_1)\nonumber=
\\
&=& 2\sqrt{z}Re(E^{b1}).\nonumber
\end{eqnarray}
Inserting the expression for $\lambda_1$ into Eq. (\ref{eq:Prop_3_4_P_ev_eq_2}) gives that
\begin{eqnarray}\\\nonumber
w_1 &=& \frac{1}{\sqrt{z}E^{11}}(E^{-1} \begin{pmatrix}0 & b^\dagger \\ b & 0 \end{pmatrix}E^{-1} {\bf e}_1 - 2 Re(E^{b1})E^{-1}{\bf e}_1)=
\\
&=&\frac{1}{\sqrt{z}E^{11}}(E^{-1} \begin{pmatrix}0 & b^\dagger \\ 0 & 0 \end{pmatrix}E^{-1} {\bf e}_1 + E^{-1} \begin{pmatrix}0 & 0 \\ b & 0 \end{pmatrix}E^{-1} {\bf e}_1 - 2 Re(E^{b1})E^{-1}{\bf e}_1)=\nonumber
\\
&=&\frac{1}{\sqrt{z}E^{11}}(E^{b1} E^{-1} {\bf e}_1 + E^{11}E^{-1}\hat b - 2 Re(E^{b1})E^{-1}{\bf e}_1)=\nonumber
\\
&=&\frac{1}{\sqrt{z}} \left(E^{-1}\hat b - \frac{\overline{E^{b1}}}{E^{11}}E^{-1}{\bf e}_1\right).\nonumber
\end{eqnarray}
The next $O(\epsilon^2)$ equation is
\begin{equation}
E^{-1}A_2w_0 + E^{-1}A_1 w_1 + E^{-1}A_0w_2= \lambda_2 w_0 + \lambda_1 w_1 + \lambda_0 w_2.
\end{equation}
Multiplying this equation by ${\bf e}_1^T$ and recalling that $A_0 w_2 = 0$ and that ${\bf e}_1^\dagger w_0 = E^{11}$ gives
\begin{eqnarray}
\label{eq:Prop_3_4_P_ev_eq_3}
\lambda_2 &=& \frac{{\bf e}_1^T E^{-1} A_2 E^{-1} {\bf e}_1}{E^{11}} + \frac{{\bf e}_1^T E^{-1} A_1 \frac{1}{\sqrt{z}} (E^{-1}\hat b - \frac{\overline{E^{b1}}}{E^{11}}E^{-1}{\bf e}_1)}{ E^{11} }=
\\
&=&\frac{{\bf e}_1^T E^{-1} A_2 E^{-1} {\bf e}_1}{E^{11}} + \frac{E^{11}E^{bb} + (\overline{E^{b1}})^2 - 2\overline{E^{b1}}Re(E^{b1})}{E^{11}}=\nonumber
\\
&=&\frac{{\bf e}_1^T E^{-1} A_2 E^{-1} {\bf e}_1}{E^{11}} + \frac{E^{11}E^{bb} - E^{b1}\overline{E^{b1}}}{E^{11}}.\nonumber
\end{eqnarray}
Combining Eqs. (\ref{eq:Prop_3_4_P_ev_eq_1})-(\ref{eq:Prop_3_4_P_ev_eq_3}), we obtain the following approximate stochastic representation for the largest eigenvalue $\ell_1$ of $E^{-1}H_{\epsilon}$
\begin{equation}
\label{eq:Prop_3_4_ell_long}
\ell_1(\epsilon) = zE^{11} + 2\epsilon \sqrt{z}Re(E^{b1}) + \epsilon^2 \frac{{\bf e}_1^T E^{-1} A_2 E^{-1} {\bf e}_1}{E^{11}} + \epsilon^2 \frac{E^{11}E^{bb} - E^{b1}\overline{E^{b1}}}{E^{11}} + o(\epsilon^2).
\end{equation}

Next, to derive the approximate distribution of $\ell_1$ corresponding to the above equation, we study a $2 \times 2$ Hermitian matrix $S$, whose inverse is defined by
\begin{equation}
\label{eq:Prop_3_4_S_def}
S^{-1} =
\begin{pmatrix}
E^{11} & \overline{E^{b1}} \\
E^{b1} & E^{bb}
\end{pmatrix}
\end{equation}
meaning  $S^{-1} = M^\dagger E^{-1} M$, where the $m \times 2$ matrix $M = [{\bf e}_1 , \hat b]$. Inverting this matrix gives
\begin{equation}
S = \frac{1}{E^{11}{E^{bb} - E^{b1} \overline{E^{b1}} }}
\begin{pmatrix}
E^{bb} & -E^{b1} \\
-\overline{E^{b1}} & E^{11}.
\end{pmatrix}
\end{equation}
Hence in terms of the matrix $S$ and $S^{-1}$, Eq. (\ref{eq:Prop_3_4_ell_long}) can be written as
\begin{equation}
\label{eq:Prop_3_4_ell_eq1}
\ell_1(\epsilon) = zS^{11} + 2\epsilon \sqrt{z}Re(E^{b1}) + \frac{\epsilon^2}{S_{22}} + \epsilon^2 \frac{{\bf e}_1^T E^{-1} A_2 E^{-1} {\bf e}_1}{E^{11}} + o(\epsilon^2).
\end{equation}

To establish Propositions 3 and 4, we start from Eq. (\ref{eq:Prop_3_4_ell_eq1}). We neglect the second term $T_1 = 2 \epsilon \sqrt{z} Re(E^{b1})$ which is symmetric with mean zero, and whose variance is much smaller than that of the first term. We also approximate the last term, denoted by $T_2$, by its mean value, using Lemma 3. We now have
$$ \ell_1(\epsilon) \approx zS^{11} + \frac{\epsilon^2}{S_{22}} + \epsilon^2 c(m, n) $$
where $c(m, n)$ is the expectation from Lemma 3.
Recall that (\ref{eq:Prop_3_4_ell_eq1}) is the largest eigenvalue of $E^{-1}H_\epsilon = \epsilon^2E^{-1}H$. We need to divide by $\epsilon^2$ in order to get the eigenvalue of $E^{-1}H$.
By doing so, and inserting the distributions of $S^{11}, S_{22}$, that are known from Lemma 2,  we have
\begin{align*}
\ell_1(E^{-1}H) \approx \frac{2z}{\epsilon^2 \chi_{2n_E-2m+2}^2} + \frac{2||b||^2}{\chi_{2n_E-2m+4}^2} + \frac{m-1}{(n_E -m)(n_E -m -1)} .
\end{align*}
Next, by inserting the distributions of $||b||^2$, $z$ and the relevant value of $\epsilon$, we get that for Proposition 3
$$ \ell_1(\lambda) \approx (1+\lambda)
\frac{\chi_{2n_H}^2}{\chi_{2n_E-2m+2}^2} +
\frac{\chi_{2m-2}^2}{\chi_{2n_E-2m+4}^2} +
\frac{m-1}{(n_E -m)(n_E -m -1)} $$
and for Proposition 4
\begin{align*}
 \ell_1(\omega) \approx
\frac{\chi_{2n_H}^2(2\omega)}{ \chi_{2n_E-2m+2}^2} +
\frac{\chi_{2m-2}^2 }{\chi_{2n_E-2m+4}^2} +
\frac{m-1}{(n_E -m)(n_E -m -1)}.
\end{align*}
Notice that from lemma 2 and the comment about the independency of $u$ and $z$ in the beginning of the proof, we get that all of the above $\chi^2$ random variables are independent.
At last, since ratios of independent $\chi^2$ random variables follow a $F$ distribution, the two propositions follow.

\end{proof}

\begin{proof}[Proof of Prop. \ref{canonical}]
Since $\omega$ depends through $X^\dagger X$, following (\ref{HEcondx}), we invoke Proposition \ref{prop:4} with the re-parametrization $m=p, n_H=q$, and $n_E=n-q$ to obtain the approximate conditional distribution of $l_1$ as
\begin{align}
\ell_1\left(E^{-1}H\right)|X\approx a_1 F_{b_1,c_1}\left(\frac{2\rho^2}{1-\rho^2} \left( X^\dagger X\right)_{11}\right)+a_2F_{b_2,c_2}+a_3.
\end{align} 
Since $\left( X^\dagger X\right)_{11}\sim \frac{1}{2}\chi^2_{2n}$, the final result follows by removing the condition by using the definition of $F_{a,b}^\chi(c,n) $ given in (\ref{Fchidef}).
\end{proof}

\begin{proof}[Proof of Prop. \ref{prop:innerv} and \ref{prop:innervnon}]

Let us assume w.l.o.g. that $\mathbf{v}=\mathbf{e}_1$. Therefore, we can write (\ref{R}) as
\begin{align}
R=\frac{|\hat{\mathbf{v}}^\dagger \mathbf{e}_1|^2}{||\hat{\mathbf{v}}||^2}.
\end{align}
Moreover, following Lemma 1,  we have
\begin{align}
\label{eigapprox}
\hat{\mathbf{v}}=w_0+\sigma w_1+\sigma^3 w_3+\cdots
\end{align}
where
\begin{align}
\label{eigvecdef}
w_0=\mathbf{e}_1,\;\;\; w_1=\frac{1}{||u||}\left(\begin{array}{c} 0\\ v_1\end{array}\right),\;\;\; w_3=\frac{1}{||u||^3} 
\left(\begin{array}{c} 0\\ \sum_{j=2}^n v_j v_j^\dagger v_1\end{array}\right).
\end{align}
Here
\begin{align}
u\sim \Biggl\{\begin{array}{ll}
\mathcal{CN}(0,(\lambda+\sigma^2)I_n)  & \text{for $H\sim \mathcal{CW}_m(n,\lambda \mathbf{e}_1\mathbf{e}_1^{'}+\sigma^2 I_m)$}\\
\mathcal{CN}(\mu,\sigma^2I_n)  & \text{for $H\sim \mathcal{CW}_m(n,\sigma^2I_m,(\omega/\sigma^2)\mathbf{e}_1\mathbf{e}_1^{'})$}
\end{array}
\end{align}
and $v_j\sim\mathcal{CN}(0,I_{m-1})$ are independent random vectors with $\omega=||\mu||^2$. Therefore, we have
\begin{align}
R=\displaystyle \frac{1}{\displaystyle 1+\sigma^2 \frac{||v_1||^2}{||u||^2}+2\sigma^4 \frac{\sum_{j=2}^n |v_1^\dagger v_j|^2}{||u||^4}+\cdots}.
\end{align}
The final result follows by using the distributional arguments given in the proof of Propositions 1 and 2.
\end{proof}

\section{Proof of the auxiliary lemmas}
\label{sec:proofs_lemmas}

\begin{proof}[Proof of Lemma 1]
Write the \(m\times n\) matrix $X(\epsilon) = [x_1, \dots, x_n]$ and observe that $X(-\epsilon) =U X(\epsilon)$, where $U=diag(1,-1,\dots,-1)$, is an orthogonal matrix. Thus, the matrix  $H(-\epsilon)= U^T H(\epsilon) U$ has the same eigenvalues as $H(\epsilon) $. In particular,  the largest eigenvalue $\ell_1$ and its corresponding eigenvector $v_1$ satisfy
\begin{equation}
        \label{eq:L_1_even}
  \ell_1(-\epsilon) = \ell_1(\epsilon), \qquad v_1(-\epsilon) = Uv_1(\epsilon).
\end{equation}
Hence $\ell_1$ and the first component of $v_1$ are even functions of $\epsilon$ whereas the remaining components of $v_1$ are odd.
Denote the following matrices:
\begin{equation}
\label{eq:L_1_P_matrices}
 A_0 = \begin{pmatrix}z & 0 \\ 0 & 0_{m-1} \end{pmatrix}, \qquad
   A_1 = \sqrt{z}\begin{pmatrix}0 & b^\dagger \\ b & 0_{m-1} \end{pmatrix}, \qquad
   A_2 = \begin{pmatrix}0 & 0 \\ 0 & Z \end{pmatrix}.
\end{equation}

We decompose the matrix $H(\epsilon) $ as
\begin{eqnarray}
H = \sum_{j=1}^{n} x_j x_j^\dagger &=& \sum_{j=1}^{n} (u_j {\bf e}_1 + \epsilon \xi_j ^ \bot) (u_j {\bf e}_1 + \epsilon \xi_j ^ \bot)^\dagger \nonumber\\
& = &
\sum_{j=1}^{n} |u_j|^2 {\bf e}_1 {\bf e}_1^T +\epsilon
\sum_{j=1}^{n}[ \xi_j ^ \bot \cdot  \overline{u_j} {\bf e}_1^T + u_j {\bf e}_1 \cdot  {\xi_j ^ \bot}^\dagger ] +
\epsilon^{2}\sum_{j=1}^{n}  \xi_j ^ \bot \cdot {\xi_j ^ \bot}^\dagger \nonumber\\
&=&\begin{pmatrix}z & 0 \\ 0 & 0_{n-1} \end{pmatrix} + \epsilon\sqrt{z}\begin{pmatrix}0 & b^\dagger \\ b & 0_{n-1} \end{pmatrix} + \epsilon^2\begin{pmatrix}0 & 0 \\ 0 & Z \end{pmatrix}\nonumber
\end{eqnarray}
meaning
\begin{equation}
\label{eq:L_1_P_decomp_h}
H(\epsilon) = A_0 + \epsilon A_1 + \epsilon^2 A_2.
\end{equation}

Since \(H(\epsilon)\) is Hermitian for all \(\epsilon\), it follows that the largest eigenvalue \(\ell_1(\epsilon)\) is real-valued. Furthermore, since $H(\epsilon)$ is an holomorphic symmetric function of $\epsilon$, it follows from Kato (\cite{Kato}, Theorem 6.1, page 120) that the largest eigenvalue $\ell_1$ and its eigenprojection $P(\epsilon)$ are analytic functions of $\epsilon$, in some neighborhood of zero.

For $\epsilon=0$, ${\bf e}_1$ is an eigenvector with eigenvalue $z$, that is, $\langle P(0){\bf e}_1, {\bf e}_1\rangle = \langle {\bf e}_1, {\bf e}_1\rangle = 1$. Since $P$ is an analytic function of \(\epsilon\) and the inner product is a smooth function, the function $\langle P(\epsilon){\bf e}_1, {\bf e}_1\rangle $ is a (real-valued) analytic function of $\epsilon$ and also strictly positive in some neighborhood of $\epsilon=0$. We may thus define
\begin{equation}
\label{eq:L_1_v1_def}
v_1(\epsilon) \ = \langle P(\epsilon) {\bf e}_1, {\bf e}_1\rangle ^{-1}  P(\epsilon) {\bf e}_1.
\end{equation}
Clearly, $v_1$ is an eigenvector corresponding to the eigenvalue  $\ell_1$ and it is also analytic in some neighborhood of zero.\\
We may thus expand $\ell_1$ and $v_1$ in a convergent Taylor series in $\epsilon$. Eq. (\ref{eq:L_1_even}) implies that all odd coefficients vanish in the expansion for \(\ell_1 \):
$$ \ell_1 = \lambda_0 + \epsilon^2 \lambda_2 + \epsilon^4 \lambda_4 + \dots $$
$$ v_1 = w_0 + \epsilon w_1 + \epsilon^2 w_2 + \epsilon^3 w_3 + \epsilon^4 w_4 + \dots $$
Inserting this expansion into the eigenvalue equation $Hv_1 = \ell_1 v_1$ gives the following set of equations for $r \geq 0$
\begin{equation}
\label{eq:L_1_P_ev_eq}
A_0 w_r + A_1 w_{r-1} + A_2 w_{r-2} = \lambda_0 w_r + \lambda_2 w_{r-2} + \lambda_4 w_{r-4} + \dots
\end{equation}
with the convention that vectors with negative subscripts are zero. From the $r=0$ equation, $A_0 w_0 = \lambda_0 w_0$, we readily find that
\begin{align*}
 \lambda_0 = z, \qquad w_0 = const \cdot {\bf e}_1.
 \end{align*}
Using Eq. (\ref{eq:L_1_v1_def}), 
$$\langle v_1, {\bf e}_1\rangle =  \langle \langle P(\epsilon) {\bf e}_1, {\bf e}_1\rangle ^{-1}  P(\epsilon){\bf e}_1,  {\bf e}_1\rangle = \langle P(\epsilon) {\bf e}_1, {\bf e}_1\rangle ^{-1} \langle P(\epsilon){\bf e}_1,  {\bf e}_1\rangle = 1$$
and  $w_0 = v_1(0) = {\bf e}_1$. This implies that $w_j$, for $j \geq 1$, is orthogonal to ${\bf e}_1$, that is orthogonal to $w_0$.

From the eigenvector remarks following (\ref{eq:L_1_even}) it follows that $w_{2j}=0$ for $j \geq 1$. These remarks allow considerable simplification of equations (\ref{eq:L_1_P_ev_eq}); we use those for $r=1$ and $r=3$ 
\begin{equation}
\label{eq:L_1_P_ev_eq_2}
A_1w_0 = \lambda_0 w_1, \qquad A_2w_1 = \lambda_0 w_3 + \lambda_2 w_1
\end{equation}
from which we obtain, by setting $\hat b = \begin{pmatrix} 0 \\ b \end{pmatrix}$,
\begin{align*}
 w_1 = z ^ {-1/2} \hat b, \qquad w_3 = \lambda_0^{-1}(A_2 - \lambda_2I)w_1.
 \end{align*}
Multiply (\ref{eq:L_1_P_ev_eq}) on the left by $w_0 ^ H$ and use the first equation of (\ref{eq:L_1_P_ev_eq_2}) to get, for $r$ even,
$$ \lambda_r = (A_1 w_0)^\dagger w_{r-1} = \lambda_0 w_1^\dagger w_{r-1}$$
and hence
\begin{align*}
\lambda_2 &= \lambda_0 w_1^\dagger w_1 = b^\dagger b \\
\lambda_4 &= w_1^\dagger(A_2 - \lambda_2I)w_1 = z^{-1}b^\dagger(Z - b b^\dagger) b.
\end{align*}
\end{proof}

To prove lemma 2 and 3, we require the following two claims, which are the complex analogies of two theorems from Muirhead (\cite{Muirhead}, page 93-96, theorems 3.2.8 and 3.2.11). 

\begin{claim}\label{claim:1}

Suppose $A \sim CW_m(n, \Sigma)$ where $A$ and $\Sigma$ are partitioned as follows
$$A =
\begin{pmatrix}
A_{11} & A_{12} \\
A_{21} & A_{22}
\end{pmatrix}
\qquad
\Sigma =
\begin{pmatrix}
\Sigma_{11} & \Sigma_{12} \\
\Sigma_{21} & \Sigma_{22}
\end{pmatrix}
$$
and let $A_{11\cdot 2} = A_{11} - A_{12} A_{22}^{-1}A_{21}$, and let $\Sigma_{11\cdot 2} = \Sigma_{11} - \Sigma_{12} \Sigma_{22}^{-1}\Sigma_{21}$. Then, $A_{11\cdot 2}$ is distributed as $\mathcal{CW}_k(n-m+k, \Sigma_{11\cdot 2})$ and is independent of $A_{12}, A_{21}$ and $A_{22}$.
\end{claim}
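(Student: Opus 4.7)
The plan is to prove this complex Wishart partitioning claim by following the standard Gaussian conditioning argument, which is the complex analogue of Muirhead's proof in the real case.

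First I would represent $A$ as $A=\sum_{j=1}^{n}x_{j}x_{j}^{\dagger}$ where $x_{1},\ldots,x_{n}$ are i.i.d.\ $\mathcal{CN}_{m}(0,\Sigma)$, and partition each $x_{j}=(y_{j}^{\dagger},z_{j}^{\dagger})^{\dagger}$ with $y_{j}\in\mathbb{C}^{k}$ and $z_{j}\in\mathbb{C}^{m-k}$. Collecting these into matrices $Y=[y_{1},\ldots,y_{n}]\in\mathbb{C}^{k\times n}$ and $Z=[z_{1},\ldots,z_{n}]\in\mathbb{C}^{(m-k)\times n}$, we have $A_{11}=YY^{\dagger}$, $A_{22}=ZZ^{\dagger}$, and $A_{12}=YZ^{\dagger}=A_{21}^{\dagger}$.

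Next I would invoke the complex conditional Gaussian decomposition: writing $w_{j}=y_{j}-\Sigma_{12}\Sigma_{22}^{-1}z_{j}$, the vectors $w_{j}$ are i.i.d.\ $\mathcal{CN}_{k}(0,\Sigma_{11\cdot 2})$ and jointly independent of $(z_{1},\ldots,z_{n})$. With $W=[w_{1},\ldots,w_{n}]$ we have $Y=\Sigma_{12}\Sigma_{22}^{-1}Z+W$, so substituting and cancelling the cross terms gives
\begin{equation*}
A_{11\cdot 2}=A_{11}-A_{12}A_{22}^{-1}A_{21}=W\bigl(I_{n}-Z^{\dagger}(ZZ^{\dagger})^{-1}Z\bigr)W^{\dagger}=WPW^{\dagger},
\end{equation*}
where $P$ is the orthogonal projection onto the null space of $Z$; with probability one $P$ has rank $n-(m-k)=n-m+k$. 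Simultaneously, $A_{12}=\Sigma_{12}\Sigma_{22}^{-1}A_{22}+WZ^{\dagger}$ depends on $W$ only through $W(I-P)$, the projection onto the row space of $Z$.

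To finish, I would condition on $Z$. Writing $P=UU^{\dagger}$ with $U\in\mathbb{C}^{n\times(n-m+k)}$ having orthonormal columns (measurable in $Z$), the independence of $W$ from $Z$ and the rotational invariance of the complex Gaussian yield that $WU$ has i.i.d.\ $\mathcal{CN}_{k}(0,\Sigma_{11\cdot 2})$ columns conditional on $Z$, so $A_{11\cdot 2}=(WU)(WU)^{\dagger}\sim\mathcal{CW}_{k}(n-m+k,\Sigma_{11\cdot 2})$ given $Z$. Because this conditional law does not depend on $Z$, it is also the unconditional law, and $A_{11\cdot 2}$ is independent of $Z$ (hence of $A_{22}$). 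For the independence from $A_{12}$, note that $WU$ and $W(I-UU^{\dagger})$ are jointly complex Gaussian with zero cross-covariance (conditional on $Z$), hence independent; since $A_{11\cdot 2}$ is a function of $WU$ while $A_{12}$ is a function of $A_{22}$ and $W(I-P)$, the desired independence follows, and then $A_{21}=A_{12}^{\dagger}$ is handled automatically. The only subtlety worth flagging is verifying that the complex-Gaussian quadratic form $WPW^{\dagger}$ with a random but independent projection $P$ truly produces an unconditional $\mathcal{CW}_{k}(n-m+k,\Sigma_{11\cdot 2})$; this hinges on the fact that the rank of $P$ is almost surely constant, which is guaranteed by $n\geq m$ (otherwise $A$ is singular and the statement is vacuous).
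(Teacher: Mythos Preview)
Your argument is correct, but it takes a genuinely different route from the paper's. The paper proves the claim by a direct density manipulation: it performs the change of variables $(A_{11},A_{12},A_{22})\mapsto(A_{11\cdot2},A_{12},A_{22})$, notes that the Jacobian is triangular with unit diagonal, and then decomposes $\det A$, $\det\Sigma$, and $\operatorname{tr}(\Sigma^{-1}A)$ so that the complex Wishart density factors as a $\mathcal{CW}_k(n-m+k,\Sigma_{11\cdot2})$ density in $A_{11\cdot2}$ times a function of $(A_{12},A_{22})$ alone. Your approach instead works with the underlying Gaussian sample $A=XX^{\dagger}$, uses the conditional decomposition $Y=\Sigma_{12}\Sigma_{22}^{-1}Z+W$, and identifies $A_{11\cdot2}$ as $WPW^{\dagger}$ for the projection $P$ onto $\ker Z$. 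Your route gives a more structural, probabilistic picture: the $n-m+k$ degrees of freedom are visible as the rank of $P$, and the independence of $A_{11\cdot2}$ from $(A_{12},A_{22})$ comes transparently from the orthogonality of $WP$ and $W(I-P)$ conditional on $Z$. The paper's density route is more algebraic and self-contained (no conditioning or projection bookkeeping), and it yields the full joint density of $(A_{11\cdot2},A_{12},A_{22})$ as a by-product, which can be useful if one later needs the distribution of the other blocks. Both arguments are standard in the literature; Muirhead presents both in the real case.
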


\begin{proof}
In the following change of variables $A_{11\cdot 2}=A_{11} - A_{12} A_{22}^{-1}A_{21}$, $B_{12} = A_{12}$, $B_{22} = A_{22}$, the Jacobian matrix is an upper-diagonal matrix, where the diagonal entries are all one. Hence
$$({\rm d}A) = ({\rm d}A_{11})\wedge({\rm {\rm d}A}_{12})\wedge({\rm d}A_{22}) = ({\rm d}A_{11\cdot 2})\wedge({\rm d}B_{12})\wedge({\rm d}B_{22}).$$
We note that
$$ \det A = \det A_{22} \det (A_{11} - A_{12}A_{22}^{-1}A_{21}) = \det B_{22}\det A_{11\cdot 2}$$
and
$$ \det\Sigma = \det\Sigma_{22}\det\Sigma_{11\cdot 2}.$$
Denote by $\Sigma^{-1} = \begin{pmatrix} C_{11} & C_{12} \\ C_{21} & C_{2} \end{pmatrix}$. Now,\\
$$ \text{tr}(\Sigma^{-1} A) =
\text{tr}\left[
\begin{pmatrix}
C_{11} & C_{12} \\
C_{21} & C_{22}
\end{pmatrix}
\begin{pmatrix}
A_{11\cdot 2} + B_{12}B_{22}^{-1}B_{21} & B_{12} \\
B_{21} & B_{22}
\end{pmatrix}
\right] = $$ $$
\text{tr} \left[ C_{11}A_{11\cdot 2} \right] +
\text{tr} \left[ C_{11}B_{12}B_{22}^{-1}B_{21} \right] +
\text{tr} \left[ C_{12}B_{21} \right] +
\text{tr} \left[ C_{21}B_{12} \right] +
\text{tr} \left[ C_{22}B_{22} \right].
$$
Using the above equation and the fact that $\text{tr}(XY) = \text{tr}(YX)$ and $\Sigma^{-1}, A$ are self-adjoint, we get
$$ \text{tr} \left[ C_{11} (B_{12} + C_{11}^{-1}C_{12}B_{22})B_{22}^{-1} (B_{12} + C_{11}^{-1} C_{12} B_{22})^\dagger \right] + $$
$$ \text{tr} \left[ B_{22} (C_{22} - C_{21}C_{11}^{-1}C_{12})\right] +\text{tr} \left[C_{11}A_{11 \cdot 2}\right] = $$
$$ = \text{tr} \left[ C_{11}B_{12}B_{22}^{-1}B_{21} \right] + \text{tr} \left[ C_{12}B_{21} \right] +
\text{tr} \left[ C_{21}B_{12} \right] + \text{tr} \left[ C_{12}C_{11}^{-1}C_{21}B_{22} \right] +
$$$$
+\text{tr} \left[ B_{22}C_{22} \right] - \text{tr} \left[ B_{22}C_{21}C_{11}^{-1}C_{12} \right] +
\text{tr} \left[ C_{11}A_{11\cdot 2} \right] = \text{tr}(\Sigma^{-1} A).
$$
Using the relations $C_{11} = \Sigma_{11\cdot 2}^{-1}$, $C_{22}-C_{21}C_{11}^{-1}C_{12} = \Sigma_{22}^{-1}$ and $ C_{11}^{-1}C_{12} = -\Sigma_{12} \Sigma_{22}^{-1}$, we get
$$ \text{tr}(\Sigma^{-1} A) = \text{tr} \left[ \Sigma_{11\cdot 2}^{-1} (B_{12} - \Sigma_{12}\Sigma_{22}^{-1}B_{22})B_{22}^{-1} (B_{12} - \Sigma_{12} \Sigma_{22}^{-1} B_{22})^\dagger \right] + $$
$$ \text{tr} \left[ B_{22} \Sigma_{22}^{-1} \right] + \text{tr} \left[\Sigma_{11\cdot 2}^{-1} A_{11 \cdot 2}\right]. $$
Now, the we can present the joint density of $A$ \cite{Goodman_63} as
$$ p(A) = \frac{1}{\mathcal{C}\Gamma_m(n) (\det \Sigma)^n} \text{etr}(-\Sigma^{-1} A) (\det A)^{n-m} = $$
$$
\frac{\text{etr}(-\Sigma_{11\cdot 2}^{-1} A_{11\cdot 2}) (\det A_{11\cdot 2})^{n-m+k-k}}{\mathcal{C}\Gamma_{k}(n-m+k)\cdot(\det \Sigma_{11\cdot 2})^{n-m+k}} \cdot
\frac{\text{etr}(-\Sigma_{22}^{-1} B_{22}) (\det B_{22})^{n-m+k}}{\mathcal{C}\Gamma_{m-k}(n)(\det \Sigma_{22})^{n}} \cdot $$$$
\cdot \frac{\mathcal{C}\Gamma_{k}(n-m+k)\cdot \mathcal{C}\Gamma_{m-k}(n) \cdot \text{etr}(-\Sigma_{11\cdot 2}^{-1} (B_{12} - \Sigma_{12}\Sigma_{22}^{-1}B_{22})B_{22}^{-1} (B_{12} - \Sigma_{12} \Sigma_{22}^{-1} B_{22})^\dagger)
 }{\mathcal{C}\Gamma_m(n)\cdot (\det \Sigma_{11\cdot 2})^{m-k}\cdot (\det B_{22})^{k}} \cdot
$$
where $\mathcal{C}\Gamma_a(b) = \pi^{\frac{1}{2}a(a-1)}\Gamma(b)\cdots\Gamma(b-a+1)$.\\
By this decomposition of the density function, and by the note on the change of variables, we conclude that $A_{11\cdot 2}$ is distributed as $\mathcal{CW}_{k}(n-m+k, \Sigma_{11\cdot 2} )$ and is independent of $B_{12}, B_{21}$ and $B_{22}$, and also of $A_{12}, A_{21}$ and $A_{22}$.
\end{proof}

\begin{claim}\label{claim:2}
Let $A\sim\mathcal{CW}_m(n, \Sigma)$ and let $M$ be a $k \times m$ matrix of rank $k$, where $M$ is independent of $A$. Then $(MA^{-1}M^\dagger)^{-1}\sim\mathcal{CW}_k(n-m+k, (M \Sigma^{-1} M^\dagger)^{-1})$.
\end{claim}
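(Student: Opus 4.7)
The plan is to reduce to the square invertible case and then apply Claim~\ref{claim:1} (the complex Schur-complement result). First I would handle the easy case $k=m$: if $M$ is an $m\times m$ invertible matrix independent of $A$, then $(MA^{-1}M^{\dagger})^{-1}=M^{-\dagger}AM^{-1}$, and by representing $A=\sum_{i=1}^{n} z_i z_i^{\dagger}$ with $z_i\sim\mathcal{CN}(0,\Sigma)$ i.i.d., one sees that $M^{-\dagger}z_i\sim\mathcal{CN}(0,M^{-\dagger}\Sigma M^{-1})$, so $M^{-\dagger}AM^{-1}\sim\mathcal{CW}_m(n,M^{-\dagger}\Sigma M^{-1})$. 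Since $M^{-\dagger}\Sigma M^{-1}=(M\Sigma^{-1}M^{\dagger})^{-1}$, this establishes the claim in the square case.

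For general $k<m$, the idea is to augment. Choose any $(m-k)\times m$ matrix $M_2$ such that $M^{*}=\begin{pmatrix} M \\ M_2 \end{pmatrix}$ is invertible (independent of $A$). Applying the square case to $M^{*}$ gives
\begin{equation*}
B \;:=\; (M^{*}A^{-1}(M^{*})^{\dagger})^{-1}\;\sim\;\mathcal{CW}_m\!\left(n,\;\Theta\right),\qquad \Theta=(M^{*}\Sigma^{-1}(M^{*})^{\dagger})^{-1}.
\end{equation*}
Writing $C=M^{*}A^{-1}(M^{*})^{\dagger}=B^{-1}$ in block form with $C_{11}=MA^{-1}M^{\dagger}$, the standard block inversion formula gives $(C_{11})^{-1}=B_{11}-B_{12}B_{22}^{-1}B_{21}=B_{11\cdot 2}$. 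Thus the target random matrix $(MA^{-1}M^{\dagger})^{-1}$ coincides with the Schur complement $B_{11\cdot 2}$ of the Wishart matrix $B$.

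Now Claim~\ref{claim:1} applies directly: $B_{11\cdot 2}\sim\mathcal{CW}_k(n-m+k,\Theta_{11\cdot 2})$. It remains to identify the covariance parameter. Setting $P=M^{*}\Sigma^{-1}(M^{*})^{\dagger}$ so that $\Theta=P^{-1}$, the same block inversion identity applied to $P$ yields $\Theta_{11\cdot 2}=(P_{11})^{-1}=(M\Sigma^{-1}M^{\dagger})^{-1}$, which matches exactly the claimed scale matrix. Combining these steps yields the conclusion.

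The only genuinely non-routine point is the algebraic verification $\Theta_{11\cdot 2}=(M\Sigma^{-1}M^{\dagger})^{-1}$; the rest is bookkeeping. One must also confirm that the augmentation $M_2$ can be chosen independently of $A$ (trivial, since $M$ has full row rank $k$) so that Claim~\ref{claim:1} indeed applies to $B$ with the stated scale matrix. I do not expect any complex-versus-real issues, since both Claim~\ref{claim:1} and the invariance of $\mathcal{CW}_m$ under invertible linear transformations have already been established in the complex setting.
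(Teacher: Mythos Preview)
Your proof is correct, and it reaches the conclusion by a route that is close in spirit to the paper's but organized differently. The paper first whitens to identity covariance by setting $B=\Sigma^{-1/2}A\Sigma^{-1/2}$, then uses an SVD $M\Sigma^{-1/2}=L[I_k:0]H$ together with unitary invariance of $\mathcal{CW}_m(n,I)$ to reduce to the canonical projector $[I_k:0]$; Claim~\ref{claim:1} is then applied to $C=HBH^\dagger$ with identity scale, and the final covariance comes out as $(LL^\dagger)^{-1}=(M\Sigma^{-1}M^\dagger)^{-1}$. You instead establish the square case directly and then augment $M$ to an invertible $M^{*}$, apply the square case, and use Claim~\ref{claim:1} for a general scale matrix $\Theta$, identifying $\Theta_{11\cdot 2}=(M\Sigma^{-1}M^\dagger)^{-1}$ by a second application of the block-inversion identity. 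Your argument avoids the SVD and the preliminary whitening, at the cost of invoking Claim~\ref{claim:1} with a non-identity $\Sigma_{11\cdot 2}$ and checking the extra algebraic identity for $\Theta_{11\cdot 2}$; the paper's route trades that algebra for the SVD bookkeeping. Both are equally valid and of comparable length.
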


\begin{proof}
Set $B = \Sigma^{-1/2}A\Sigma^{-1/2}$. Now $B\sim\mathcal{CW}_m(n, I)$. For $R=M \Sigma^{-1/2}$,
$$ (MA^{-1}M^\dagger)^{-1}  = (R B^{-1} R^\dagger)^{-1} $$
$$ (M\Sigma^{-1}M^\dagger)^{-1} = (R R^\dagger)^{-1}$$
thus it is sufficient to prove that $(R B^{-1} R^\dagger)^{-1}\sim\mathcal{CW}_k(n-m+k, (R R^\dagger)^{-1})$. Let $R = L[I_k:0]H$ be the SVD decomposition of $R$, where $L$ is $k \times k$ and nonsingular and $H$ is $m \times m$ unitary. Now, 
$$ (R B^{-1} R^\dagger)^{-1} = \left( L[I_k:0]H B^{-1} H^\dagger[I_k:0]' L^\dagger \right)^{-1} = $$
$$ = (L^{-1})^\dagger\left( [I_k:0] (HBH^\dagger)^{-1} [I_k:0]' ] \right)^{-1} L^{-1} = $$
$$ = (L^{-1})^\dagger\left( [I_k:0] C^{-1} [I_k:0]' ] \right)^{-1} L^{-1} $$
where $C=HBH^\dagger\sim \mathcal{CW}_m(n, I)$. Let
$$
D = C^{-1} =
\begin{pmatrix}
D_{11} & D_{12} \\
D_{21} & D_{22}
\end{pmatrix}, \quad
C =
\begin{pmatrix}
C_{11} & C_{12} \\
C_{21} & C_{22}
\end{pmatrix}
$$
where $D_{11}$ and $C_{11}$ are $k \times k$. Then $(R B^{-1} R^\dagger)^{-1} = (L^{-1})^\dagger D_{11}^{-1}L^{-1}$, and since $D_{11}^{-1} = C_{11}-C_{12}C_{22}^{-1}C_{21}$, it follows from Claim \ref{claim:1} that $D_{11}^{-1}\sim\mathcal{CW}_k(n-m+k, I_k)$. Hence $(L^{-1})^\dagger D_{11}^{-1}L^{-1}\sim\mathcal{CW}_k(n-m+k, (LL^\dagger)^{-1})$, and since $(LL^\dagger)^{-1} = (RR^\dagger)^{-1}$, the proof is complete.\\
\end{proof}

\begin{proof}[Proof of Lemma 2]
Note that $S^{11} = E^{11} = {\bf e}_1^T E^{-1} {\bf e}_1$. Then, according to Claim \ref{claim:2}, $(S^{11}) ^{-1} \sim \mathcal{CW}_1(n-m+1, I_1) = \frac{\chi_{2n-2m+2}^{2}}{2}$, meaning $S^{11} \sim \frac{2}{\chi_{2n-2m+2}^{2}}$. \\
Next, by definition $S = (M^\dagger E^{-1}M)^{-1}$, with fixed $M$. Therefore, according to  the same claim, $S \sim \mathcal{CW}_2(n-m+2, D)$, where $D = ([{\bf e}_1,  b] [{\bf e}_1,  b]^\dagger)^{-1} = diag(1, \frac{1}{||b||^2})$. Therefore $S_{22} \sim \frac{\chi_{2n-2m+4}^{2}}{2||b||^2}$. \\
Finally, notice that $(S^{11})^{-1} = S_{11} - S_{12}S_{22}^{-1}S_{21}$, thus according to Claim \ref{claim:1}, $(S^{11})^{-1}$
is independent of $S_{22}$, meaning $S^{11}$ is independent of $S_{22}$.

\end{proof}

\begin{proof}[Proof of Lemma 3]
Since $A_2$ is independent of $E$, we have
$$
\mathbb{E}[A_2|E]=\mathbb{E}[A_2] =
\begin{pmatrix}
0 & 0 \\
0 & I_{m-1}.
\end{pmatrix}
$$
Hence$$
\mathbb{E}\left[\frac{ {\bf e}_1^T E^{-1} A_2 E^{-1} {\bf e}_1 }{E^{11}}\right]=
\mathbb{E}_E\left[\mathbb{E}_{A|E}\left[\frac{ {\bf e}_1^T E^{-1} A_2 E^{-1} {\bf e}_1 }{E^{11}}\right]\right]= \mathbb{E}\left[\sum_{j=2}^{m}\frac{||E^{1j}||^2}{E^{11}}\right] =
(m-1) \mathbb{E}\left[\frac{||E^{12}||^2}{E^{11}}\right].
$$
To compute this expectation, consider the matrix $S^{-1} = [{\bf e}_1 \; e_2] ^T E^{-1} [{\bf e}_1 \; e_2] = \begin{pmatrix} E^{11} & E^{21} \\ \overline{E}^{21} & E^{22}\end{pmatrix}$. Now, $S^{22}= E^{22}$ and $S_{22} = E^{11}/(E^{11}E^{22} - ||E^{12}||^2)$. Hence,
\begin{equation}
\frac{1}{S_{22}} = E^{22} - \frac{||E^{12}||^2}{E^{11}}.
\end{equation}
We now take expectation of the above equation. As in the previous lemma, $S_{22} \sim \frac{1}{2} \chi_{2n - 2m + 4}^2$, whereas $E^{22} \sim \frac{2}{\chi_{2n-2m+2}^{2}}$. Using these results, we obtain
\begin{equation}
\mathbb{E}\left[ \frac{(E^{12})^2}{E^{11}}\right]  =\mathbb{E}[E^{22}]  - \mathbb{E}\left[\frac{2}{\chi^2_{2n-2m+4}}\right] = \frac{2}{2n-2m} - \frac{2}{2n-2m+2}=\frac{1}{(n-m)(n-m+1)}
\end{equation}
which completes the proof.
\end{proof}


\end{document}